\theoremstyle{plain}
\newtheorem{thm}{Theorem}
  \theoremstyle{plain}
  \newtheorem{conjecture}[thm]{Conjecture}
  \theoremstyle{definition}
  \newtheorem{defn}[thm]{Definition}
  \theoremstyle{plain}
  \newtheorem{lem}[thm]{Lemma}
  \theoremstyle{plain}
  \newtheorem*{thm*}{Theorem}
\def\COMMENT#1{}
\def\eps{\varepsilon}
\let\epsilon=\varepsilon
\let\subset=\subseteq
\begin{document}

\def\noproof{{\unskip\nobreak\hfill\penalty50\hskip2em\hbox{}\nobreak\hfill%
        $\square$\parfillskip=0pt\finalhyphendemerits=0\par}\goodbreak}
\def\endproof{\noproof\bigskip}
\newdimen\margin   
\def\textno#1&#2\par{%
    \margin=\hsize
    \advance\margin by -4\parindent
           \setbox1=\hbox{\sl#1}%
    \ifdim\wd1 < \margin
       $$\box1\eqno#2$$%
    \else
       \bigbreak
       \hbox to \hsize{\indent$\vcenter{\advance\hsize by -3\parindent
       \sl\noindent#1}\hfil#2$}%
       \bigbreak
    \fi}
\def\proof{\removelastskip\penalty55\medskip\noindent{\bf Proof. }}



\global\long\def\labelenumi{(\roman{enumi})}

\title{Optimal packings of Hamilton cycles in graphs of high minimum degree}

\author{Daniela K\"uhn, John Lapinskas and Deryk Osthus}

\thanks{D.~K\"uhn was supported by the ERC, grant no.~258345.}

\date{\today}
\subjclass[2010]{05C35, 05C45, 05C70}
\begin{abstract}
We study the number of edge-disjoint Hamilton cycles one can guarantee in
a sufficiently large graph $G$ on $n$ vertices with minimum degree $\delta=(1/2+\alpha)n$.
For any constant $\alpha>0$, we give an optimal answer in the following sense: 
let $\textnormal{reg}_{\textnormal{even}}(n,\delta)$ denote the degree of the 
largest even-regular spanning subgraph one can guarantee in a graph on $n$ vertices
with minimum degree $\delta$. Then the number of edge-disjoint Hamilton cycles we
find equals $\textnormal{reg}_{\textnormal{even}}(n,\delta)/2$.
The value of  $\textnormal{reg}_{\textnormal{even}}(n,\delta)$ is known for infinitely many 
values of $n$ and $\delta$.
We also extend our results to graphs $G$ of minimum degree $\delta \ge n/2$,
unless $G$ is close to the extremal constructions for Dirac's theorem.
Our proof relies on a recent and very general result of K\"uhn and Osthus on Hamilton
decomposition of robustly expanding regular graphs.
\end{abstract}
\maketitle

\section{Introduction}\label{sec:intro}

Dirac's theorem \cite{Dirac} states that any graph on $n\ge3$ vertices
with minimum degree at least $n/2$ contains a Hamilton cycle. This
degree condition is best possible. Surprisingly, though, the assertion of Dirac's theorem
can be strengthened considerably: Nash-Williams~\cite{Diracext}
proved that the conditions of Dirac's theorem actually 
guarantee linearly many edge-disjoint Hamilton cycles.
\begin{thm}
Every graph on $n$ vertices with minimum degree at least $n/2$ contains
at least $\lfloor5n/224\rfloor$ edge-disjoint Hamilton cycles.
\end{thm}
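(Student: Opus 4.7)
The obvious greedy approach---apply Dirac's theorem, remove a Hamilton cycle, iterate---fails immediately, because deleting a single Hamilton cycle drops the minimum degree below $n/2$ and Dirac's theorem no longer applies to the residual graph. My plan is therefore to decouple the problem into two steps: first extract a large even-regular spanning subgraph $R \subseteq G$, and then pack edge-disjoint Hamilton cycles into $R$. This is attractive because Hamiltonicity results for regular graphs are available under substantially weaker degree bounds than for general graphs, making the iterative removal much more tolerant.

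For the first step, I would invoke a Tutte-type $f$-factor theorem (or a degree-constrained subgraph result in the spirit of Erd\H{o}s--Gallai) to extract a spanning $2d$-regular subgraph $R \subseteq G$ with $d$ as large as possible. Since $\delta(G)\ge n/2$, a counting argument should give $d \ge cn$ for an absolute constant $c>0$; pinning down the best achievable $c$ is one of the main calculations and already requires care.

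For the second step, I would extract Hamilton cycles from $R$ one at a time. After $i$ removals the leftover graph is still $(2d-2i)$-regular, and so a Hamiltonicity criterion tailored to regular graphs---for instance in the spirit of Jackson's theorem, where 2-connected regular graphs of degree above roughly $n/3$ are Hamiltonian---lets us continue as long as $2d-2i$ exceeds this lower threshold. This produces on the order of $d - \gamma n/2$ edge-disjoint Hamilton cycles, for whatever constant $\gamma$ the regular-graph Hamiltonicity result forces.

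The main obstacle is to optimize the two steps jointly: we need $R$ both to have large degree \emph{and} for that degree to sit comfortably above the Hamiltonicity threshold for regular graphs, and moreover we need to maintain enough connectivity in $R$ after iterated removals so that the regular-graph Hamiltonicity result keeps applying. The specific constant $5/224$ should then emerge from the arithmetic balancing these constraints. A further subtlety is that we cannot simply aim to maximize $d$ in isolation; $R$ must be chosen so that successive cycle deletions do not destroy the structural hypotheses (such as 2-connectivity) required in the second step. This tension between maximizing $d$ and preserving structure is, in effect, precisely what the much more powerful tools of the present paper---the K\"uhn--Osthus Hamilton decomposition theorem for robustly expanding regular graphs---are designed to resolve optimally.
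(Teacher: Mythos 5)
The paper does not actually prove this theorem: it is Nash-Williams's result, stated in the introduction purely for context and cited to his 1971 paper, so there is no ``paper's own proof'' to compare against. Your two-step strategy---extract a large even-regular spanning subgraph $R$ via a Tutte-type factor theorem, then strip Hamilton cycles from $R$ using a Hamiltonicity criterion for regular graphs---is the natural shape of argument here, but as written it has a concrete numerical gap which the paper itself makes visible. By Babai's construction described in the introduction, and quantitatively by the bounds on $\textnormal{reg}_{\textnormal{even}}(n,\delta)$ given there, at $\delta=n/2$ the largest even-regular spanning subgraph one can \emph{guarantee} has degree only about $n/4$. Jackson's theorem, however, requires a $2$-connected $k$-regular graph to satisfy $n\le 3k$, i.e.\ degree at least roughly $n/3$, before it yields even a single Hamilton cycle. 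Since $n/4<n/3$, the extracted $R$ already starts \emph{below} the Jackson threshold on worst-case inputs, so your iteration cannot even begin; the ``tension between maximizing $d$ and preserving structure'' you describe is not a tuning problem to be balanced but an outright failure at step one.

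There is also an anachronism worth noting: Jackson's theorem dates from 1980, nine years after Nash-Williams's 1971 paper, so it cannot be the tool he used. Making a proof of this flavour work requires a Hamiltonicity criterion for regular graphs whose threshold lies well below $n/3$---which means exploiting connectivity considerably higher than $2$---together with an argument that the extracted factor has, and after iterated cycle removals \emph{retains}, that level of connectivity. The modest constant $5/224$ is precisely the price of solving those two problems with the tools available at the time, and your sketch does not yet supply either ingredient.
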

Nash-Williams \cite{initconj} initially conjectured that such a graph
must contain at least $\lfloor n/4\rfloor$ edge-disjoint Hamilton
cycles, which would clearly be best possible. However, Babai observed that
this trivial bound is very far from the truth (see \cite{initconj}).  
Indeed, the following construction (which is based on Babai's argument) 
gives a graph $G$ which contains at most $\lfloor(n+2)/8\rfloor$
edge-disjoint Hamilton cycles. The graph $G$ consists of one empty
vertex class $A$ of size $2m$, one vertex class $B$ of size $2m+2$ containing
a perfect matching and no other edges, and all possible edges between $A$ and $B$.
Thus $G$ has order $n=4m+2$ and minimum degree $2m+1$. 
Any Hamilton cycle in $G$ must contain at least two edges of
the perfect matching in~$B$, so $G$ contains at most $\lfloor(m+1)/2\rfloor$ edge-disjoint
Hamilton cycles. 

The above question of Nash-Williams naturally extends to graphs of higher minimum degree:
suppose that $n/2 \le \delta \le n-1$.
\emph{How many edge-disjoint Hamilton cycles can one guarantee in a graph $G$ on $n$ vertices with minimum degree $\delta$?}

Clearly, as $\delta$ increases, one expects to find more edge-disjoint Hamilton cycles.
However, the above construction shows that the trivial bound of  $\lfloor \delta/2 \rfloor$ cannot always be attained. 
A less trivial bound is provided by the largest even-regular spanning subgraph in~$G$. More precisely,
let $\textnormal{reg}_{\textnormal{even}}(G)$
be the largest degree of an even-regular spanning subgraph of $G$. Then let
\[
\textnormal{reg}_{\textnormal{even}}(n,\delta):=\min\{\textnormal{reg}_{\textnormal{even}}(G):|G|=n,\ \delta(G)=\delta\}.
\]
Clearly, in general we cannot guarantee more than $\textnormal{reg}_{\textnormal{even}}(n,\delta)/2$
edge-disjoint Hamilton cycles in a graph of order $n$ and minimum
degree $\delta$. In fact, we conjecture this bound can always be
attained.
\begin{conjecture} \label{con:mainconjecture}
Suppose $G$ is a graph on $n$ vertices
with minimum degree $\delta \ge n/2$. Then $G$ contains at least
$\textnormal{reg}_{\textnormal{even}}(n,\delta)/2$ edge-disjoint
Hamilton cycles.
\end{conjecture}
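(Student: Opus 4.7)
The plan is to reduce the conjecture to the Hamilton decomposition theorem for robustly expanding regular graphs of K\"uhn and Osthus that the abstract alludes to. Concretely, I would first extract from $G$ a spanning even-regular subgraph $H$ of degree exactly $r:=\textnormal{reg}_{\textnormal{even}}(n,\delta)$, and then decompose $H$ into $r/2$ edge-disjoint Hamilton cycles by showing that $H$ (or a slight perturbation) is a robust $(\nu,\tau)$-expander in the sense required by that theorem. If both steps go through, the output is exactly the bound claimed in the conjecture, and the matching upper bound $\textnormal{reg}_{\textnormal{even}}(n,\delta)/2$ is trivial since every edge-disjoint Hamilton packing is an even-regular spanning subgraph.

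For the first step, the existence of an even-regular spanning subgraph of the right degree in any $G$ with $\delta(G)=\delta$ follows from standard factor-extraction arguments: one writes $r$ as $2\lfloor r/2\rfloor$ and uses Tutte/Katerinis-type factor theorems, or alternatively a random near-regular orientation combined with Vizing-type adjustments, to find a spanning $r$-regular subgraph $H\subseteq G$. Since $r\ge \delta-O(1)$ in the relevant regime, $H$ inherits a large minimum degree, in particular $\delta(H)\ge n/2 - o(n)$ in the range $\delta\ge(1/2+\alpha)n$.

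For the second step, I would verify that $H$ is a robust expander. The key claim is that if $\delta(H)\ge(1/2+\alpha)n$ then for any $S\subseteq V(H)$ with $\tau n\le|S|\le(1-\tau)n$, the $\nu$-robust neighbourhood $RN_{\nu,H}(S)$ has size at least $|S|+\nu n$ (for suitable $0<\nu\ll\tau\ll\alpha$). This is a fairly direct double-counting argument based on the fact that most vertices of $V\setminus S$ have roughly $|S|/2$ neighbours in $S$, so few of them can have fewer than $\nu n$. Feeding this expansion into the K\"uhn--Osthus decomposition theorem completes the packing.

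The main obstacle is the range $n/2\le\delta\le(1/2+\alpha)n$, where robust expansion can genuinely fail: graphs close to the union of two cliques sharing few vertices, or close to the Babai-type bipartite-like construction given in the introduction, are not robust expanders, yet they satisfy Dirac's condition. For these cases I would argue separately, either by showing that the extremal structure forces $\textnormal{reg}_{\textnormal{even}}(n,\delta)$ to drop to a value that can be achieved by an ad-hoc construction of Hamilton cycles (routing carefully through the bottleneck matching or cut), or by performing a preliminary ``rotation'' step that perturbs $G$ into a graph that is far from extremal, so that the expansion-based argument applies. Handling this extremal case uniformly, without any $\alpha$-gap, is precisely why the theorem in the paper is proved only for $\delta\ge(1/2+\alpha)n$ (and for $\delta\ge n/2$ away from the extremal configurations), while the conjecture for $\delta$ right at $n/2$ in full generality remains the essential difficulty.
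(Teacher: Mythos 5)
The statement you are asked to prove is Conjecture~\ref{con:mainconjecture}, which the paper itself leaves open: the paper establishes only the weaker Theorem~\ref{thm:mainresult}, valid for $\delta\ge(1/2+\eps)n$ and large $n$, and cites separate subsequent work for the full conjecture. So the fairest reading of your proposal is as a plan for Theorem~\ref{thm:mainresult}, and even there it contains a concrete gap. Your step two rests on the claim that $r:=\textnormal{reg}_{\textnormal{even}}(n,\delta)$ satisfies $r\ge\delta-O(1)$, so that the even $r$-factor $H$ inherits minimum degree near $n/2$ and is automatically a robust expander by a minimum-degree argument (essentially Lemma~\ref{lem:anclem1}). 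That claim is false. By Theorem~\ref{thm:regbounds}, $r\approx(\delta+\sqrt{n(2\delta-n)})/2$, which for $\delta=(1/2+\alpha)n$ is roughly $(1/4+\alpha/2+\sqrt{\alpha/2})n$; this drops below $n/2$ for all $\alpha<3/2-\sqrt{2}\approx 0.086$, and down to about $n/4$ when $\alpha\to0$. So $H$ can have minimum degree far below $n/2$, the min-degree criterion for robust expansion does not apply, and the paper explicitly names this as the central obstacle in Section~\ref{toolsproof}. Your remark that the difficulty is confined to $n/2\le\delta\le(1/2+\alpha)n$ therefore misplaces the problem: it persists throughout most of the range where Theorem~\ref{thm:mainresult} is claimed.

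What the paper does instead is split on whether $G$ is $\eta$-extremal, i.e.\ contains a subgraph resembling $G_{n,\delta,\mathrm{ext}}$. In the extremal case (Section~\ref{sec:near}) it passes to a carefully chosen spanning subgraph $G_1$ with the same minimum degree, takes a degree-maximal even factor $G_2$ of $G_1$, and proves directly, using the structural information in Lemmas~\ref{lem:almostregular}--\ref{lem:expansionprops}, that $G_2$ is a robust expander even though its degree may be well below $n/2$. In the non-extremal case (Section~\ref{sec:far}) it first extracts a sparse even factor $H$ that is itself a robust expander (Lemma~\ref{lem:smallexpanderfactor}), uses Tutte's theorem to show $G-H$ still has an even factor $H'$ of degree at least $\textnormal{reg}_{\textnormal{even}}(n,\delta)$ (Lemma~\ref{lem:largefactor}), and applies Theorem~\ref{thm:hamdecresult} to $H+H'$, which is a robust expander by monotonicity since $H$ is. Neither branch is the ``take a degree-$r$ factor and verify robust expansion by min degree'' route you outline; the new ideas of the paper are precisely the extremal analysis and the add-a-sparse-expander trick that circumvent the failure of that route.
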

Our main result confirms this conjecture exactly, as long as  $n$ is large and
$\delta$ is slightly larger than $n/2$.
\begin{thm}\label{thm:mainresult}
For every $\eps>0$, there exists an
integer $n_{0}=n_{0}(\epsilon)$ such that every graph $G$ on $n\ge n_{0}$
vertices with $\delta(G)\ge(1/2+\eps)n$ contains at least
$\textnormal{reg}_{\textnormal{even}}(n,\delta(G))/2$ edge-disjoint
Hamilton cycles.
\end{thm}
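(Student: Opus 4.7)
The plan is to invoke the K\"uhn-Osthus theorem on Hamilton decompositions of robustly expanding regular graphs, mentioned in the abstract. Set $r:=\textnormal{reg}_{\textnormal{even}}(n,\delta(G))$. First I would extract an even-regular spanning subgraph $H\subseteq G$ of degree at least $r$, which exists by the very definition of $r$ (since $\textnormal{reg}_{\textnormal{even}}(G)\ge\textnormal{reg}_{\textnormal{even}}(n,\delta(G))=r$). Next I would verify that $H$ is a robust $(\nu,\tau)$-expander for suitable $\nu,\tau>0$ depending only on $\eps$. Finally, since $H$ is a regular robust expander of even degree, I would apply the K\"uhn-Osthus decomposition theorem to split $H$ into $\delta(H)/2\ge r/2$ edge-disjoint Hamilton cycles. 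These are also Hamilton cycles of $G$, which gives the bound.

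The crucial preparatory step is to show that $r\ge(1/2+\eps/2)n$ for $n$ large. Equivalently, one must prove that every graph on $n$ vertices with minimum degree at least $(1/2+\eps)n$ contains an even-regular spanning subgraph of degree at least $(1/2+\eps/2)n$. The natural tool is a defect version of Tutte's $f$-factor theorem, which in this dense regime should deliver a regular spanning subgraph of degree $\delta(G)-O(1)$; parity constraints (both $r$ and $nr$ must be even) must be tracked but are mild asymptotically. Given such an $H$, robust expansion is essentially automatic from the min-degree bound: whenever $S\subseteq V(H)$ satisfies $\tau n\le|S|\le(1-\tau)n$ and $\nu,\tau$ are chosen small enough in terms of $\eps$, every vertex outside $S$ has at least $\nu n$ neighbours in $S$, so the robust neighbourhood of $S$ in $H$ fills almost all of $V(H)$.

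The main obstacle I foresee is this first step: producing an even-regular spanning subgraph of degree close to $\delta(G)$, and in particular showing that the extremal quantity $r$ (defined as a minimum over all graphs on $n$ vertices with the given minimum degree) is already at least $(1/2+\eps/2)n$. The assumption $\delta(G)\ge(1/2+\eps)n$ sits comfortably above the Dirac threshold, so the near-extremal constructions for Conjecture~\ref{con:mainconjecture} flagged in the abstract do not arise here; this is precisely what keeps the $f$-factor analysis tractable and is also why the argument does not extend verbatim to the full range $\delta\ge n/2$. Once $H$ is in place with $\delta(H)\ge(1/2+\eps/2)n$, invoking the K\"uhn-Osthus Hamilton decomposition theorem is a direct step that yields the required $r/2$ edge-disjoint Hamilton cycles.
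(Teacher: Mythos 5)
Your plan is precisely the ``simple strategy'' that the paper explicitly explains does \emph{not} work for small $\eps$, and the gap is concrete: the claim that $r:=\textnormal{reg}_{\textnormal{even}}(n,\delta(G))\ge(1/2+\eps/2)n$ (or even $r>n/2$) is false for $\eps$ below roughly $3/2-\sqrt{2}\approx 0.086$. Theorem~\ref{thm:regbounds} gives $\textnormal{reg}_{\textnormal{even}}(n,\delta)\approx\frac{\delta+\sqrt{n(2\delta-n)}}{2}$, which for $\delta=(1/2+\eps)n$ equals $\left(\frac14+\frac{\eps}{2}+\sqrt{\frac{\eps}{2}}\right)n$, and this exceeds $n/2$ only when $\eps\ge 3/2-\sqrt{2}$ and exceeds $(1/2+\eps/2)n$ only when $\eps\ge 1/8$. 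The extremal graph $G_{n,\delta,\textnormal{ext}}$ of Section~\ref{sec:extexample} (the generalization of Babai's construction: an independent set $A$ joined completely to a sparse almost-regular set $B$) witnesses this upper bound and has $\delta(G_{n,\delta,\textnormal{ext}})=\delta$, so your assertion that near-extremal constructions ``do not arise'' above the Dirac threshold is mistaken --- they arise for every $\delta$. Consequently your even factor $H$ can have degree well below $n/2$ (e.g.\ around $0.33n$ when $\eps=0.01$), and then the final step --- robust expansion of $H$ being ``essentially automatic from the min-degree bound'' --- collapses: a vertex of degree $r<n/2$ may have all its neighbours outside a set $S$ of size $\approx n/2$, so the robust neighbourhood need not even cover $S$. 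The no-defect $f$-factor argument cannot rescue this, since the obstruction is not a parity defect but the genuine structural bottleneck encoded by $G_{n,\delta,\textnormal{ext}}$.

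What the paper does instead is a two-case analysis around exactly this obstruction. If $G$ is $\eta$-extremal (Definition~\ref{def:extremal}), one cannot boost the factor's degree, but Lemma~\ref{lem:expansionprops} exploits the bipartite-like structure to prove \emph{directly} that a degree-maximal even factor of a cleaned-up subgraph $G_1$ is a robust expander even though its degree is below $n/2$. If $G$ is not $\eta$-extremal, one first peels off a sparse robustly expanding even factor $H$ via Lemmas~\ref{lem:anclem1}--\ref{lem:smallexpanderfactor}, then uses Tutte's theorem together with non-extremality (Lemma~\ref{lem:largefactor}) to show $G-H$ still has an $r$-factor $H'$; robust expansion of $H+H'$ then comes for free from $H$, not from the degree of $H'$. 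Your outline matches the paper only in the regime $\eps\ge 2-\sqrt{2}-1/2$ already handled in~\cite{Kellyapps}, which the paper is explicitly extending past.
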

In fact, we even show that if $G$ is not close to the extremal example, then $G$ contains 
significantly more than the required number of edge-disjoint Hamilton cycles
(see Lemma~\ref{lem:mainresultpart2}).
Our proof of Theorem~\ref{thm:mainresult} is based on a recent result (Theorem~\ref{thm:hamdecresult}) 
of K\"uhn and Osthus~\cite{Kelly,Kellyapps},
which states that every ``robustly expanding'' regular (di)graph has a Hamilton decomposition.
In~\cite{Kellyapps}, a straightforward argument was already used to derive Conjecture~\ref{con:mainconjecture}
for $\delta\ge(2-\sqrt{2}+\epsilon)n$ (see Section~\ref{toolsproof}). 
Our extension of this result to $\delta \ge(1/2+\eps)n$ involves new ideas.

Subsequently, Csaba, K\"uhn, Lo, Osthus and Treglown~\cite{1factor} have proved Conjecture~\ref{con:mainconjecture} for large $n$,
by solving the case when $\delta$ is allowed to be close to $n/2$.
The proof relies on Theorem~\ref{thm:mainresult} and Theorem~\ref{thm:halfnresult}. (The latter provides a stability result
when $\delta$ is close to $n/2$.)

Earlier, Christofides, K\"uhn and Osthus~\cite{CKO} used the regularity lemma
to prove an approximate version of Theorem~\ref{thm:mainresult}.
Hartke and Seacrest~\cite{HartkeHCs} were able improve this result while avoiding the  use of the regularity
lemma (but still with the same restriction on $\delta$). 
This enabled them to omit the condition that $G$ has to be very large. They also gave significantly better error bounds.

Accurate bounds on $\textnormal{reg}_{\textnormal{even}}(n,\delta)$ are known. 
Note that the complete bipartite graph whose vertex classes are almost equal shows that 
$\textnormal{reg}_{\textnormal{even}}(n,\delta)=0$ for $\delta<n/2$.
Katerinis \cite{Katerinis} considered the case when
$\delta=n/2$. His result  was independently generalised to larger values of $\delta$ in~\cite{CKO} 
(see~\cite{Kellyapps} for a summarised
version) and by Hartke, Martin and Seacrest \cite{Hartkefactors}.
The following bounds are from~\cite{Hartkefactors}.
\begin{thm}\label{thm:regbounds}
Suppose that $n,\delta\in\mathbb{N}$ and $n/2\le\delta<n$.
Then
\begin{equation} \label{regbound}
\frac{\delta+\sqrt{n(2\delta-n)+8}}{2}-\epsilon\le\textnormal{reg}_{\textnormal{even}}(n,\delta)
\le\frac{\delta+\sqrt{n(2\delta-n)}}{2}+\frac{4}{\sqrt{n(2\delta-n)}+4}.
\end{equation}
where $0<\epsilon \le 2$ is chosen to make the left hand side of~(\ref{regbound}) an even
integer.
\end{thm}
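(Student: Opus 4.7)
Since $\textnormal{reg}_{\textnormal{even}}(G)$ equals the largest even $k$ for which $G$ has a $k$-factor, the natural tool is Tutte's $f$-factor theorem (for the lower bound) together with an explicit extremal construction (for the upper bound). My plan is to handle the two inequalities separately.

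For the lower bound, given an even integer $k$ with $k \le \frac{\delta + \sqrt{n(2\delta - n) + 8}}{2} - \epsilon$, I would show that $G$ has a $k$-factor by contradiction. If not, Tutte's $f$-factor theorem (applied to $f \equiv k$) provides disjoint $S, T \subseteq V(G)$ with
\[
k|S| - k|T| + \sum_{v \in T} d_G(v) - e_G(S, T) < q_G(S, T; k),
\]
where $q_G(S, T; k)$ counts components $C$ of $G - (S \cup T)$ for which $k|C| + e_G(C, T)$ is odd. Using $d_G(v) \ge \delta$, the bound $e_G(S, T) \le |S||T|$, and the fact that $\delta \ge n/2$ forces $G - S$ to be well connected whenever $|S|$ is not too large, I would argue that $q_G(S, T; k) \le |T| + O(1)$, and then extract a quadratic inequality in $|S|, |T|, k$. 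Optimising over $|S|, |T|$ yields something of the form $4k^2 - 4\delta k + (n - \delta)^2 \le 8$, which inverts to $k > \frac{\delta + \sqrt{n(2\delta - n) + 8}}{2}$, contradicting the choice of $k$.

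For the upper bound, I would exhibit an extremal graph $G^*$. A natural construction takes $V(G^*) = A \cup B$ with $|B|$ approximately $n - \delta$, $G^*[A]$ a clique, all edges present between $A$ and $B$, and $G^*[B]$ a sparse near-regular graph chosen so that $\delta(G^*) = \delta$ exactly. For any $k$-regular spanning subgraph $F$ of $G^*$, double-counting $e_F(A, B)$ from the two sides yields $k(|A| - |B|) = 2(e_F(A) - e_F(B))$; combined with $e_F(A) \le \binom{|A|}{2}$ and the small size of $E(G^*[B])$, this forces $k$ into the stated range. The parameter $|B|$ is tuned to solve the same quadratic as in the lower bound, and the additive $4/(\sqrt{n(2\delta - n)} + 4)$ captures the integrality error in this choice.

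The main obstacle will be controlling $q_G(S, T; k)$ precisely in the lower bound: the crude bound $q_G \le n - |S| - |T|$ is far too weak, and one instead needs something like $q_G \le |T| + O(1)$, which requires a structural argument exploiting $\delta \ge n/2$ (so that components of $G - S - T$ are few and largish). Another delicate point is parity: since $k$ must be even and $kn$ must be even for a $k$-factor to exist, the additive $+8$ inside the square root and the additive $\epsilon$ rounding arise from tracking these integrality constraints carefully, as does the $4/(\sqrt{n(2\delta-n)}+4)$ correction on the upper-bound side.
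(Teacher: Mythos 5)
The paper does not actually prove Theorem~\ref{thm:regbounds}: it quotes the result from Hartke, Martin and Seacrest \cite{Hartkefactors}, noting only that the lower bound comes from Tutte's factor theorem and that the upper bound comes from the extremal graph $G_{n,\delta,\textnormal{ext}}$ described in Section~\ref{sec:extexample}. Your lower-bound plan (apply Tutte's $f$-factor theorem, bound $q_G(S,T;k)$ via component-size arguments exploiting $\delta\ge n/2$, and optimise a quadratic in $|S|,|T|,k$) is in the right spirit and matches what the paper indicates; indeed the paper itself uses essentially this kind of Tutte-condition case analysis in Lemma~\ref{lem:largefactor}.

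Your upper-bound construction, however, is wrong, and in a way that inverts the correct one. You take $A$ to be a \emph{clique} with $B$ the \emph{smaller} class of size $\approx n-\delta$. That graph has far too many edges: a $k$-factor can route almost all of its edges through the clique $A$, and the constraint $k(|A|-|B|)=2(e_F(A)-e_F(B))$ together with $e_F(A)\le\binom{|A|}{2}$ only yields $k\le\frac{\delta(\delta-1)}{2\delta-n}$, which for $\delta$ near $n/2$ exceeds $n$ and is therefore vacuous. In fact your $G^*$ admits an even factor of degree close to $\delta$ itself, so it certifies nothing. The correct construction (Section~\ref{sec:extexample} of the paper, a generalisation of Babai's example) makes $A$ an \emph{independent set} of size $n-\Delta$ and $B$ the \emph{larger} class of size $\Delta$, where $\Delta\approx\bigl(n+\sqrt{n(2\delta-n)}\bigr)/2$, with $G[B]$ being $(\delta+\Delta-n)$-regular and $G[A,B]$ complete bipartite. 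Then in any $k$-factor every $A$-vertex sends all $k$ edges into $B$, forcing $e_F(B)=k(|B|-|A|)/2$; since $e_G(B)=(\delta+\Delta-n)|B|/2$ is small, this yields $k\le\frac{(\delta+\Delta-n)\Delta}{2\Delta-n}$, which after substituting the chosen $\Delta$ simplifies exactly to $\frac{\delta+\sqrt{n(2\delta-n)}}{2}$ (the $4/(\sqrt{n(2\delta-n)}+4)$ term absorbs the rounding of $\Delta$). The key idea you missed is that the extremal graph must \emph{starve} one side of internal edges (independent $A$, sparse larger $B$) so that the bottleneck $e_F(B)\le e_G(B)$ bites; making $A$ a clique removes that bottleneck entirely.
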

Note that~(\ref{regbound}) always yields at most two possible values 
for $\textnormal{reg}_{\textnormal{even}}(n,\delta)$ and even determines it
exactly for many values of the parameters $n$ and $\delta$. For example,~(\ref{regbound}) determines
$\textnormal{reg}_{\textnormal{even}}(n,n/2)$ (e.g.~in the case when $n$ is divisible by 8 it is $n/4$).
The bounds in~\cite{CKO} also give at most two possible values.
The lower bound in~(\ref{regbound}) is based on Tutte's factor theorem~\cite{Tutte}.
The upper bound is obtained by a natural generalization of Babai's construction
(see Section~\ref{sec:extexample} for a description).

Our second result concerns the case of Conjecture~\ref{con:mainconjecture} where we allow $\delta$ to be close to $n/2$.
In this case, we obtain the following `stability result':
if $\delta(G)=(1/2+o(1))n$, then
Conjecture~\ref{con:mainconjecture} holds for large $n$ as long as $G$ has
suitable expansion properties. 
In this case, we even obtain significantly more than the required number of edge-disjoint Hamilton cycles again.
These expansion properties fail only when $G$
is very close to the extremal examples for Dirac's theorem.
\begin{thm}\label{thm:halfnresult}
For every $0<\eta<1/8$, there exist $\eps>0$ and an integer $n_{0}$ such that
every graph $G$ on $n\ge n_{0}$ vertices with $(1/2-\eps)n\le \delta(G)\le(1/2+\eps)n$
satisfies one of the following:
\begin{enumerate}
\item There exists $A\subset V(G)$ with $|A|=\lfloor n/2\rfloor$ and such that
$e(A)\le\eta n^{2}$.
\item There exists $A\subset V(G)$ with $|A|=\lfloor n/2\rfloor$ and such that
$e(A,\overline{A})\le\eta n^{2}$.
\item $G$ contains at least $\max \{\textnormal{reg}_{\textnormal{even}}(n,\delta(G))/2,n/8 \}+\eps n$
edge-disjoint Hamilton cycles.
\end{enumerate}
\end{thm}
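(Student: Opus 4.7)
The overall strategy mirrors the proof of Theorem~\ref{thm:mainresult}: find an even-regular spanning subgraph $H \subseteq G$ of appropriate degree $2r$ which is robustly expanding, then invoke Theorem~\ref{thm:hamdecresult} of K\"uhn and Osthus to decompose $H$ into $r$ edge-disjoint Hamilton cycles. The new difficulty compared with Theorem~\ref{thm:mainresult} is that $\delta(G)$ can be as low as $(1/2-\eps)n$, leaving essentially no slack above the Dirac threshold.

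I would suppose that neither (i) nor (ii) holds and aim to establish (iii). The first main step is a structural dichotomy showing that $G$ is then a robust $(\nu,\tau)$-expander for some positive $\nu,\tau$ depending only on $\eta$. The rationale is that, under $\delta(G)=(1/2\pm\eps)n$, the only obstructions to robust expansion are global and take one of two forms: either $G$ is close to a balanced complete bipartite graph (in which case one of the parts witnesses (i)), or $G$ is close to the disjoint union of two almost-cliques on halves of $V(G)$ (in which case one of these almost-cliques witnesses (ii)). Ruling out (i) and (ii) therefore forces $G$ into the robustly expanding regime.

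The second main step is to extract an even-regular spanning subgraph $H \subseteq G$ of degree $2r$ with $r := \max\{\textnormal{reg}_{\textnormal{even}}(n,\delta(G))/2,\, n/8\} + \eps n$, while preserving a usable amount of robust expansion. Existence of a $2r$-factor should follow from the Tutte-type arguments underlying the lower bound in Theorem~\ref{thm:regbounds}, together with the extra room coming from the exclusion of the two extremal configurations. To ensure expansion survives, I would either randomize the choice of the $2r$-factor and bound the failure probability at each set $S$ via a union bound, or first remove a carefully chosen small set of edges so that the degree sequence becomes close to $2r$ everywhere, then finish by a greedy or defect-version argument. Once $H$ is in hand, Theorem~\ref{thm:hamdecresult} immediately supplies $r$ edge-disjoint Hamilton cycles, which is (iii).

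The hardest step is expected to be the second one. The degree target $2r$ is essentially the largest degree one could hope for in any even-regular spanning subgraph of $G$, so one cannot afford to throw away many edges for the sake of either regularity or expansion. In particular, naive random sparsification produces too much variance in the vertex degrees to land precisely on $2r$ while retaining the expansion parameters required by Theorem~\ref{thm:hamdecresult}, so a more delicate construction, combining extremal identification of low-degree pockets with a constrained random choice of the remaining $2r$-factor edges, will likely be required.
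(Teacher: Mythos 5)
Your first step is exactly the paper's: Lemma~\ref{lem:characteriseexpanders} shows that a graph with $\delta(G) \ge (1/2-\kappa)n$ either is $\eps$-close to $K_{n/2,n/2}$, is $\eps$-close to $K_{n/2}\dot\cup K_{n/2}$, or is a robust $(\nu,\tau)$-expander, and the first two options correspond to (i) and (ii). So assuming (i) and (ii) fail, $G$ is a robust expander; and the paper further observes that failure of (i) also forces $G$ to be non-$(\eta/4)$-extremal in the sense of Definition~\ref{def:extremal}, which is what the factor-existence machinery (Lemma~\ref{lem:largefactor}) needs.

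The genuine gap is in your second step, and you half-acknowledge it yourself. You want a single even-regular spanning subgraph of degree $2r$ that is simultaneously (a) regular of essentially maximal degree and (b) a robust expander, and you correctly observe that naive random sparsification or a degree-maximal factor will not deliver (b). But you then gesture at ``constrained random choices'' and ``defect-version arguments'' without a concrete construction, so the step as written does not close. The paper sidesteps the problem entirely by \emph{not} requiring the big factor to expand: it uses Lemma~\ref{lem:smallexpanderfactor} to pull out a sparse $\eps'' n$-factor $H$ of $G$ that is already a robust $(\nu',\tau)$-expander, then shows (Lemma~\ref{lem:removefactornotextremal}) that $G':=G-H$ is still non-extremal, applies the Tutte-based Lemma~\ref{lem:largefactor} to $G'$ to obtain an $r$-factor $H'$ with $r$ slightly larger than $\textnormal{reg}_{\textnormal{even}}(n,\delta)$, and finally uses monotonicity of robust expansion: since $H \subseteq H+H'$ is a robust expander, so is $H+H'$, which is regular of degree $r + \eps'' n$ and can therefore be decomposed by Theorem~\ref{thm:hamdecresult}. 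This ``sparse expander factor plus arbitrary large factor'' decomposition is the missing idea; without it, you have no viable route to a regular spanning subgraph that is both maximal-degree and robustly expanding, because such a subgraph need not exist in the form you envisage (the degree-maximal even factor may concentrate inside a dense part $B$ and fail to expand across the bipartition).
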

Note that if $G$ satisfies (i) then $e(A,\overline{A})$ must be roughly $n^2/4$,
i.e.~$G$ is close to $K_{n/2,n/2}$ with possibly some edges added to one of the vertex classes.
If $G$ satisfies~(ii), then both $e(A)$ and $e(\overline{A})$ must be roughly $n^2/8$, i.e.~$G$ is close to the union of two equal-sized cliques.

Although Conjecture~\ref{con:mainconjecture} is optimal for the class of graphs on $n$ vertices and minimum degree $\delta$,
it will not be optimal for every graph in the class -- some graphs $G$ will contain far more than 
$\textnormal{reg}_{\textnormal{even}}(n,\delta)/2$ edge-disjoint
Hamilton cycles. The following conjecture accounts for this and would be best possible for every single graph~$G$.
Note that it is far stronger than Conjecture~\ref{con:mainconjecture}.
\begin{conjecture} \label{con:betterconj}
Suppose $G$ is a graph on $n$ vertices
with minimum degree $\delta(G) \ge n/2$. Then $G$ contains at least
$\textnormal{reg}_{\textnormal{even}}(G)/2$ edge-disjoint
Hamilton cycles.
\end{conjecture}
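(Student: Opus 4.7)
The natural plan is to take the largest even-regular spanning subgraph $H$ of $G$, of degree $d := \textnormal{reg}_{\textnormal{even}}(G)$, and to apply Theorem~\ref{thm:hamdecresult} so as to decompose $H$ directly into $d/2$ edge-disjoint Hamilton cycles. Implementing this requires two things: specifying which $d$-regular spanning subgraph $H \subseteq G$ to use, and verifying that this $H$ is robustly expanding in the sense required by Theorem~\ref{thm:hamdecresult}.

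I would split into cases according to how $d$ compares with the worst-case bound $\textnormal{reg}_{\textnormal{even}}(n,\delta(G))$. If $d$ is close to $\textnormal{reg}_{\textnormal{even}}(n,\delta(G))$ then Conjecture~\ref{con:mainconjecture}, as established by Theorem~\ref{thm:mainresult} together with the subsequent extension of Csaba, K\"uhn, Lo, Osthus and Treglown, already produces the required number of edge-disjoint Hamilton cycles with room to spare. In the complementary regime $d \gg \textnormal{reg}_{\textnormal{even}}(n,\delta(G))$, the very existence of such a high-degree regular spanning subgraph should force $G$ to be far from the Babai-type extremal construction, and in particular to avoid the bad structures~(i) and~(ii) appearing in Theorem~\ref{thm:halfnresult}. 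The plan would then be to start from an arbitrary $d$-regular spanning subgraph $H_0$ of $G$ and perform a sequence of 2-edge swaps between $H_0$ and $G \setminus H_0$ to enhance the expansion of $H_0$ while preserving $d$-regularity; once $H_0$ is upgraded to a robustly expanding $d$-regular subgraph $H$, Theorem~\ref{thm:hamdecresult} finishes the argument.

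The main obstacle is the intermediate regime where $d$ only modestly exceeds $\textnormal{reg}_{\textnormal{even}}(n,\delta(G))$. In that range $d$ may be as small as roughly $n/4$, so a $d$-regular spanning subgraph is far too sparse for any soft sampling or random-spanning-subgraph argument to inherit robust expansion automatically from $G$. Any swap-based construction must delicately exploit the concrete structure of $G \setminus H_0$, subject to the rigid requirement that the final degree be \emph{exactly} $d$ rather than approximately $d$; by contrast, the proof strategy for Theorem~\ref{thm:mainresult} permits a small amount of slack in the target degree. I expect this exactness constraint, together with the need to rule out subtle obstructions to expansion in the low-degree regime, to demand genuinely new ideas beyond those in the present paper, perhaps combined with a refined stability analysis in the spirit of Theorem~\ref{thm:halfnresult} to handle the remaining borderline graphs.
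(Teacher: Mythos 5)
This statement is Conjecture~\ref{con:betterconj}, which the paper \emph{does not prove}: the authors explicitly state it as an open problem, noting that it ``is far stronger than Conjecture~\ref{con:mainconjecture}'' and that it has only been established for $\delta \ge (2-\sqrt{2}+\eps)n$ via \cite{Kellyapps}, with even an approximate version for $\delta \ge (1/2+\eps)n$ singled out as ``very interesting''. So there is no proof in the paper to compare your sketch against, and indeed you correctly recognise in your final paragraph that a complete argument would require genuinely new ideas beyond the present paper.

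That said, a few remarks on the plan itself. Your starting point --- take a degree-maximal even factor $H$ of $G$ and aim to apply Theorem~\ref{thm:hamdecresult} --- is indeed the natural one, and is exactly how the paper derives the weaker Theorem~\ref{thm:mainresult} in the near-extremal case (where $G_2$ is shown to be a robust expander). However, your case split on how $d=\textnormal{reg}_{\textnormal{even}}(G)$ compares with $\textnormal{reg}_{\textnormal{even}}(n,\delta(G))$ has a gap in the ``close'' regime: Theorem~\ref{thm:mainresult} and Conjecture~\ref{con:mainconjecture} only deliver $\textnormal{reg}_{\textnormal{even}}(n,\delta(G))/2$ Hamilton cycles, not $d/2$, so already $d = \textnormal{reg}_{\textnormal{even}}(n,\delta(G))+2$ falls outside what those results give (Lemma~\ref{lem:mainresultpart2} does provide $cn$ extra cycles, but only for graphs that are not $\eta$-extremal, and $c$ is tiny compared with the possible gap between $\textnormal{reg}_{\textnormal{even}}(G)$ and $\textnormal{reg}_{\textnormal{even}}(n,\delta(G))$). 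Your proposed $2$-edge-swap machinery for upgrading an arbitrary $d$-regular factor to a robustly expanding one in the intermediate and high-$d$ regimes is the genuinely open part; nothing in the paper supports it, and as you note, the rigid requirement that the final degree be exactly $d$, combined with $d$ potentially being as small as about $n/4$, is precisely where the known techniques (which all tolerate slack in the target degree, as in Lemmas~\ref{lem:smallexpanderfactor} and~\ref{lem:largefactor}) break down. In short: your plan is a sensible outline of where the difficulties lie, but it does not constitute a proof, and it does not correspond to any argument in the paper because the paper offers none.
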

For $\delta\ge(2-\sqrt{2}+\epsilon)n$, this conjecture was proved in~\cite{Kellyapps}, based on the main result of~\cite{Kelly}.
It would already be very interesting to obtain an approximate version of Conjecture~\ref{con:betterconj},
i.e.~a set of $(1-\eps)\textnormal{reg}_{\textnormal{even}}(G)/2$ edge-disjoint Hamilton cycles under the assumption that
$\delta(G) \ge (1+\eps)n/2$.

As a very special case, Conjecture~\ref{con:betterconj} would imply the long-standing `Hamilton factorization' conjecture of 
Nash-Williams \cite{initconj,decompconj}:
any $d$-regular graph on at most $2d$ vertices contains $\lfloor d/2\rfloor$
edge-disjoint Hamilton cycles.%
\COMMENT{I don't see off the top of my head why $2d$ vertices is best possible
here. Certainly if you have $2d+2$ then you can make the graph disconnected,
but with $2d+1$ that doesn't work. Also, the canonical example of
a graph on $n$ vertices with minimum degree $\frac{n-1}{2}$ but
no Hamilton cycle is $K_{\frac{n+1}{2},\frac{n-1}{2}}$ which isn't
regular.}
Jackson~\cite{decompconj} raised the same conjecture independently,
and proved a partial result. This was improved to an approximate version of the conjecture in~\cite{CKO}. 
The best current result towards the Hamilton factorization conjecture is due to
K\"uhn and Osthus~\cite{Kellyapps} (again as a corollary of their main result in~\cite{Kelly}).
Note that the set of Hamilton cycles guaranteed by Theorem~\ref{thm:reg}
actually forms a Hamilton decomposition.
\begin{thm}\label{thm:reg}
For every $\eps>0$ there exists an integer $n_{0}$ such that every
$d$-regular graph on $n\ge n_{0}$ vertices for which $d\ge(1/2+\eps)n$ is even contains
$d/2$ edge-disjoint Hamilton cycles.
\end{thm}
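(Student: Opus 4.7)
The plan is to derive Theorem~\ref{thm:reg} as an essentially direct consequence of the K\"uhn--Osthus Hamilton decomposition result (Theorem~\ref{thm:hamdecresult}), which asserts that every sufficiently dense robustly expanding regular graph has a Hamilton decomposition. Recall that $G$ is a robust $(\nu,\tau)$-expander if for every $S\subseteq V(G)$ with $\tau n \le |S| \le (1-\tau)n$, the robust neighbourhood $RN_\nu(S):=\{v\in V(G):|N(v)\cap S|\ge \nu n\}$ satisfies $|RN_\nu(S)|\ge |S|+\nu n$. So the task reduces to verifying that the hypothesis $d\ge(1/2+\eps)n$ forces $G$ to be a robust expander with parameters that fit Theorem~\ref{thm:hamdecresult}.

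First I would fix constants $0<\nu \ll \tau \ll \eps$, small enough that the quantitative hypothesis of Theorem~\ref{thm:hamdecresult} is satisfied for these $\nu,\tau$ and for degree at least $(1/2+\eps)n$. Then I would verify robust expansion by splitting into two cases according to the size of $S$. If $|S|\ge (1/2-\eps/2)n$, then every vertex $v$ has $|N(v)\cap S|\ge d-(n-|S|)\ge \eps n/2 \ge \nu n$, so $RN_\nu(S)=V(G)$ and $|RN_\nu(S)|=n\ge |S|+\nu n$ since $|S|\le (1-\tau)n\le (1-\nu)n$. If instead $\tau n\le |S|<(1/2-\eps/2)n$, I would double-count:
\[
d|S|=\sum_{v\in V(G)}|N(v)\cap S|\le |RN_\nu(S)|\cdot|S|+(n-|RN_\nu(S)|)\cdot \nu n,
\]
which rearranges to $|RN_\nu(S)|\ge (d|S|-\nu n^2)/(|S|-\nu n)$. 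For $\nu\ll \tau\le |S|/n$ this lower bound is at least $(1/2+\eps/2)n$, which in turn exceeds $|S|+\nu n$ in the present range. So $G$ is a robust $(\nu,\tau)$-expander in both cases.

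Once robust expansion is in hand, Theorem~\ref{thm:hamdecresult} applied to the $d$-regular graph $G$ produces a Hamilton decomposition. Since $G$ has $dn/2$ edges and each Hamilton cycle uses $n$ edges, any such decomposition contains exactly $d/2$ edge-disjoint Hamilton cycles (well-defined as $d$ is even), which is exactly what Theorem~\ref{thm:reg} asserts.

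The main ``obstacle'' is not really an obstacle in this derivation — all the depth is absorbed into the black-box Theorem~\ref{thm:hamdecresult}. The only points that require a little care are the choice of the hierarchy $\nu\ll \tau\ll \eps$ (so that both the robust expansion verification goes through and the degree threshold in Theorem~\ref{thm:hamdecresult} is met), and the observation that in the small-$|S|$ case one needs the additional hypothesis $|S|\ge \tau n$ (for some $\tau$ not too small relative to $\nu$) to guarantee that the denominator $|S|-\nu n$ in the bound above is genuinely positive and of the right order.
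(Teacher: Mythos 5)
Your proposal is correct and takes essentially the same route the paper indicates: derive robust $(\nu,\tau)$-expansion directly from the minimum-degree hypothesis (which is exactly the content of the paper's quoted Lemma~\ref{lem:anclem1}, applicable with $\nu\ll\tau\ll\eps$), and then invoke Theorem~\ref{thm:hamdecresult} to decompose the $d$-regular robust expander into $d/2$ Hamilton cycles. The paper itself only cites Theorem~\ref{thm:reg} from \cite{Kellyapps} rather than reproving it, but the Lemma~\ref{lem:anclem1}~$+$~Theorem~\ref{thm:hamdecresult} strategy you used is precisely the one described in Section~\ref{toolsproof}.
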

Frieze and Krivelevich conjectured that the trivial bound of $\lfloor \delta(G)/2 \rfloor$ edge-disjoint 
Hamilton cycles is in fact correct for random graphs.
Indeed, the results of several authors (mainly Krivelevich and Samotij~\cite{KrS}
as well as Knox, K\"uhn and Osthus~\cite{Knox2011e}) can be combined to show that  for all $0\le p\le 1$, 
the binomial random graph $G_{n,p}$ contains
$\lfloor\delta(G_{n,p})/2\rfloor$ edge-disjoint Hamilton cycles with
high probability. 
Some further related results can be found in~\cite{Gnpsurvey,Kelly,Kellyapps}.

\section{Notation}\label{sec:notation}

Given a graph $G$, we write $V(G)$ for its vertex set, $E(G)$ for
its edge set, $e(G):=|E(G)|$ for the number of its edges and $|G|$
for the number of its vertices. Given $X\subset V(G)$, we write $G-X$
for the graph formed by deleting all vertices in $X$ and $G[X]$
for the subgraph of $G$ induced by $X$. We will also write $\overline{X}:=V(G)\setminus X$
when it is unambiguous to do so. Given disjoint sets $X,Y\subset V(G)$,
we write $G[X,Y]$ for the bipartite subgraph induced by $X$ and
$Y$. If $G$ and $G'$ are two graphs, we write $G\dot{\cup}G'$ for the
graph on $V(G)\dot{\cup}V(G')$ with edge set $E(G)\dot{\cup}E(G')$.
If $V(G)=V(G')$, we also write $G+G'$ for the graph on $V(G)$ with
edge set $E(G)\cup E(G')$. An \emph{$r$-factor} of a graph $G$ is a spanning $r$-regular subgraph of $G$.
If $H$ is an $r$-factor of $G$ and $r$ is even then we also call $H$ an \emph{even factor} of~$G$.

If $G$ is an undirected graph, we write $\delta(G)$ for the minimum
degree of $G$, $\Delta(G)$ for the maximum degree of $G$ and $d(G)$ for the average degree of~$G$. Whenever
$X,Y\subset V(G)$, we write $e_G(X,Y)$ for the number of all those edges which
have one endvertex in $X$ and the other in~$Y$. We write $e_G(X)$ for the number of edges in $G[X]$, and
$e'_G(X,Y):=e_G(X,Y)+e_G(X\cap Y)$. Thus $e'_G(X,Y)$ is the number of ordered pairs $(x,y)$ of vertices
such that $x\in X$, $y\in Y$ and $xy\in E(G)$. Given a vertex $x$ of
$G$, we write $d_G(x)$ for the degree of $x$ in~$G$. We often omit the subscript $G$
if this is unambiguous. Also, if $A\subseteq V(G)$ and
the graph $G$ is clear from the context, we sometimes
write $d_A(x)$ for the number of neighbours of $x$ in $A$.
If $G$ is a digraph, we write $\delta^{+}(G)$ for the minimum outdegree
of $G$ and $\delta^{-}(G)$ for the minimum indegree of $G$. 

In order to simplify the presentation, we omit floors and ceilings
and treat large numbers as integers whenever this does not affect
the argument. The constants in the hierarchies used to state our results
have to be chosen from right to left. More precisely, if we claim
that a result holds whenever $0<1/n\ll a\ll b\ll c\le1$ (where $n$
is the order of the graph or digraph), then this means that there
are non-decreasing functions $f:(0,1]\rightarrow(0,1]$, $g:(0,1]\rightarrow(0,1]$
and $h:(0,1]\rightarrow(0,1]$ such that the result holds for all
$0<a,b,c\le1$ and all $n\in\mathbb{N}$ with $b\le f(c)$, $a\le g(b)$
and $1/n\le h(a)$. We will not calculate these functions explicitly.
Hierarchies with more constants are defined in a similar way.

Whenever $x\in\mathbb{R}$ we shall write $x_{+}:=\max\{x,0\}$. We
will write $a=x\pm\epsilon$ as shorthand for $x-\epsilon\le a\le x+\epsilon$,
and $a\ne x\pm\epsilon$ as shorthand for the statement that either
$a<x-\epsilon$ or $a>x+\epsilon$.

\section{Proof outline and further notation}\label{sec:outline}
\subsection{The extremal graph}\label{sec:extexample}
We start by defining a graph $G_{n,\delta,\textnormal{ext}}$ on $n$ vertices which is
extremal for Theorem~\ref{thm:regbounds} in the sense that
$G_{n,\delta,\textnormal{ext}}$ has minimum degree $\delta$ but the largest degree of an even factor of $G_{n,\delta,\textnormal{ext}}$
is at most the right hand side of~(\ref{regbound}). 
Given $\delta>n/2$, let $\Delta$ be the smallest integer
such that $\Delta(\delta+\Delta-n)$ is even and $\Delta\ge (n+\sqrt{n(2\delta-n)})/2$.
Partition the vertex set of $G_{n,\delta,\textnormal{ext}}$ into two classes
$A$ and $B$, with $|B|=\Delta$ and $|A|=n-\Delta$. Let $G_{n,\delta,\textnormal{ext}}[A]$ be
empty, let $G_{n,\delta,\textnormal{ext}}[B]$ be any $(\delta+\Delta-n)$-regular graph, and let $G_{n,\delta,\textnormal{ext}}[A,B]$
be the complete bipartite graph. Clearly $\delta(G_{n,\delta,\textnormal{ext}})=\delta$.
Moreover, if $H$ is a factor of $G_{n,\delta,\textnormal{ext}}$,
then  one can show that $d(H)$ is at most the right hand side of~(\ref{regbound})
(see~\cite{Hartkefactors} for details). In particular, $G_{n,\delta,\textnormal{ext}}$ contains at most $d(H)/2$
Hamilton cycles. Essentially the same construction was given in~\cite{CKO}.

\subsection{Tools and proof overview} \label{toolsproof}
An important concept in our proofs of Theorems~\ref{thm:mainresult} and~\ref{thm:halfnresult}
will be the notion of robust expanders.
This concept was first introduced by K\"uhn, Osthus and Treglown~\cite{KOTchvatal} for directed graphs.
Roughly speaking, a graph is a robust expander if for every set
$S$ which is not too small and not too large, its ``robust'' neighbourhood is at
least a little larger than $S$.
\begin{defn}
Let $G$ be a graph on $n$ vertices.
Given $0<\nu\le\tau<1$ and $S\subseteq V(G)$, we define the \emph{$\nu$-robust neighbourhood}
$RN_{\nu,G}(S)$ of $S$ to be the set of all vertices $v\in V(G)$
with $d_{S}(v)\ge\nu n$. We say that $G$ is a \emph{robust $(\nu,\tau)$-expander}
if for all $S\subset V(G)$ with $\tau n\le|S|\le(1-\tau)n$, we have
$|RN_{\nu,G}(S)|\ge|S|+\nu n$.
\end{defn}
The main tool for our proofs is the following result of K\"uhn and
Osthus~\cite{Kelly} which states that every even-regular robust expander $G$ whose degree is linear in~$|G|$
has a Hamilton decomposition. 
\begin{thm}\label{thm:hamdecresult}
For every $\alpha>0$, there exists $\tau>0$
such that for every $\nu>0$, there exists $n_{0}(\alpha,\tau,\nu)$ such
that the following holds. Suppose that 
\begin{enumerate}
\item $G$ is an $r$-regular graph on $n\ge n_{0}$ vertices, where $r\ge\alpha n$
and $r$ is even; 
\item $G$ is a robust $(\nu,\tau)$-expander. 
\end{enumerate}
Then $G$ has a Hamilton decomposition.
\end{thm}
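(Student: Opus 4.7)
The plan is to combine an approximate Hamilton decomposition with an absorbing argument. First I would set aside a small (but still linear-degree) spanning subgraph $H \subseteq G$, the \emph{absorber}, chosen so that for any sparse almost-regular ``leftover'' graph $L$ on $V(G)$ of appropriate even degree (and satisfying mild quasirandomness conditions inherited from $G$), the union $H \cup L$ admits a Hamilton decomposition. Because $\Delta(H)$ is much smaller than $\alpha n$, the remainder $G' := G - E(H)$ is still an even-regular (or close to regular) robust $(\nu', \tau')$-expander for suitably weakened parameters, of degree only slightly smaller than $r$.

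Next I would produce an approximate Hamilton decomposition of $G'$. Iteratively, one extracts a single Hamilton cycle from the current graph: robust expansion guarantees that a longest path cannot get stuck, since $|RN_{\nu,G}(S)| \ge |S| + \nu n$ for every set $S$ in the relevant range, so a rotation-extension argument (or a random-walk based argument in the style of Frieze--Krivelevich) produces a Hamilton cycle. Crucially, removing such a cycle reduces every degree by exactly $2$ and so preserves the robust-expander property with only a small loss in $\nu$, allowing the extraction to be iterated until the remaining degree drops to some prescribed small even value $r_{\text{left}}$. This yields a collection of edge-disjoint Hamilton cycles in $G'$ together with an $r_{\text{left}}$-regular leftover $L \subseteq G'$.

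Finally, the defining property of the absorber $H$ gives a Hamilton decomposition of $H \cup L$; combined with the cycles produced in the previous step, this yields a Hamilton decomposition of $G$.

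The main obstacle is the construction of $H$: one needs a fixed small-degree spanning subgraph which decomposes \emph{together with} an arbitrary admissible regular remainder into Hamilton cycles, not merely a typical one. A natural attempt is to build $H$ as a union of many carefully interlocked ``chord gadgets'' that provide enough flexibility to reroute short path segments through the extra edges of $L$, using the robust expansion of $G$ to glue the gadgets into global Hamilton cycles. Verifying that such an $H$ exists inside every robust expander, and that the rerouting works for every admissible $L$, is the delicate combinatorial heart of the argument, and is where the hierarchy $\nu \ll \tau \ll \alpha$ is genuinely used.
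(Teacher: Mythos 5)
This statement is Theorem~\ref{thm:hamdecresult} of the paper, which is imported as a black box from the companion paper~\cite{Kelly} by K\"uhn and Osthus; there is no in-paper proof to compare against. Your high-level skeleton --- set aside a linear-degree absorber $H$, approximately Hamilton-decompose $G - E(H)$, then absorb the sparse leftover $L$ into $H$ --- is indeed the architecture of~\cite{Kelly}, so the plan is recognisably the right one in shape.

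The middle step, however, fails as written. Removing a single Hamilton cycle can lower $d_S(v)$ by up to $2$ for every vertex $v$ and every set $S$, so $k$ successive removals guarantee only a robust $(\nu - 2k/n,\tau)$-expander. Reaching a sparse leftover requires $k = \Theta(\alpha n)$ removals, and the accumulated loss in the robustness parameter is then $\Theta(\alpha)$, a constant. The theorem is quantified precisely so that $\tau$ depends only on $\alpha$ and $\nu$ may afterwards be chosen arbitrarily small; in the regime $\nu \ll \alpha$ that the statement is built to cover, greedy one-at-a-time extraction destroys robust expansion long before the remainder is sparse, so there is no reason the next longest path ``cannot get stuck.'' In~\cite{Kelly} the approximate decomposition is obtained non-greedily, via a Szemer\'edi-regularity-style partition and a careful ``bi-setup'' that keeps edge removals balanced across the graph. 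The absorber itself, which you rightly flag as the crux, is also the bulk of~\cite{Kelly} (the robust decomposition lemma, chord absorbers, and cycle/parity switchers) and is not resolved by an appeal to ``interlocked chord gadgets.'' Both the approximate-decomposition phase and the absorber construction are therefore genuine gaps rather than deferred technicalities.
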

Let $G$ be a graph on $n$ vertices as in Theorem~\ref{thm:mainresult}.
Let $\delta:=\delta(G)=(1/2+\alpha)n$. (So $\alpha\ge \eps$.) As observed in~\cite{Kellyapps},
every graph on $n$ vertices whose minimum degree is at least slightly larger than $n/2$ is
a robust expander (see Lemma~\ref{lem:anclem1}). Thus our given graph $G$ is a robust expander.
Let $G^*$ be an even factor of largest degree in $G$. So $d(G^*)\ge \textnormal{reg}_{\textnormal{even}}(n,\delta)$.
If $G^*$ is still a robust expander, then we can apply Theorem~\ref{thm:hamdecresult}
to obtain a Hamilton decomposition of $G^*$ and thus at least $\textnormal{reg}_{\textnormal{even}}(n,\delta)/2$
edge-disjoint Hamilton cycles in~$G$. The problem is that if $\alpha$ is small, then
we could have $d(G^*)\le n/2$. So we cannot guarantee that $G^*$ is a robust expander.
(However, this approach works if $\alpha$ is at least slightly larger than $3/2-\sqrt{2}$.
Indeed, in this case Theorem~\ref{thm:regbounds} implies that $d(G^*)$ will be
slightly larger than $n/2$ and so $G^*$ will be a robust expander. This observation was used in~\cite{Kellyapps}
to prove Theorem~\ref{thm:mainresult} for such values of $\alpha$.)

So instead of using this simple strategy, in the proof of Theorem~\ref{thm:mainresult} we will distinguish two cases
depending on whether our graph $G$ contains a subgraph which is close to $G_{n,\delta,\textnormal{ext}}$.
Suppose first that $G$ contains such a subgraph,
$G_{1}$ say. We can choose $G_{1}$ in such a way that $\delta(G_{1})=\delta$,
so $G_{1}$ must have an even factor $G_{2}$ of degree at least $\textnormal{reg}_{\textnormal{even}}(n,\delta)$.
We will then use the fact that $G_1$ is close to $G_{n,\delta,\textnormal{ext}}$
in order to prove directly that $G_{2}$ is a robust expander. As before, this yields
a Hamilton decomposition of $G_{2}$ by Theorem~\ref{thm:hamdecresult}.
This part of the argument is contained in Section~\ref{sec:near}.

If $G$ does not contain a subgraph close to $G_{n,\delta,\textnormal{ext}}$,
then we will first find a sparse even factor $H$ of $G$ which
is still a robust expander and remove it from $G$. Call the resulting graph $G'$.
We will then use the fact that $G$ is far from containing $G_{n,\delta,\textnormal{ext}}$
to show that $G'$ still contains an even factor $H'$ of degree at
least $\textnormal{reg}_{\textnormal{even}}(n,\delta)$. Since robust expansion
is a monotone property, it follows that $H+H'$ is still a robust expander and may therefore
be decomposed into Hamilton cycles by Theorem~\ref{thm:hamdecresult}. So in this case we
even find slightly more than $\textnormal{reg}_{\textnormal{even}}(n,\delta)/2$
edge-disjoint Hamilton cycles. This part of the argument is contained in
Section~\ref{sec:far}. Altogether this will
then imply Theorem~\ref{thm:mainresult}.

In order to prove Theorem~\ref{thm:halfnresult}, we first show that every graph $G$ whose minimum degree is close to $n/2$ either satisfies conditions (i) and (ii) 
of Theorem~\ref{thm:halfnresult} or is a robust expander which does not contain a subgraph close to $G_{n,\delta,\textnormal{ext}}$. So suppose $G$ does not satisfy (i) and (ii). 
We will use the fact that $G$ is a robust expander to find a sparse robustly expanding even factor of $G$, and then argue similarly to the 
second part of the proof of Theorem~\ref{thm:mainresult} to find slightly more than
$\textnormal{reg}_{\textnormal{even}}(n,\delta)/2$ edge-disjoint Hamilton cycles in $G$. This proof is contained in Section~\ref{sec:nhalf}.

\subsection{$\eta$-extremal graphs}
The following definition formalises the notion of {}``containing
a subgraph close to $G_{n,\delta,\textnormal{ext}}$''. For technical
reasons we extend the definition to the case where $\alpha$ is negative
-- this will be used in Section~\ref{sec:nhalf} (with $|\alpha|$ very small). Note
that if $\delta=(1/2+\alpha)n$, then the vertex classes $A$ and
$B$ of $G_{n,\delta,\textnormal{ext}}$ have sizes roughly $(1/2-\sqrt{\alpha/2})n$
and $(1/2+\sqrt{\alpha/2})n$ respectively, and that $G_{n,\delta,\textnormal{ext}}[B]$
is regular of degree roughly $(\alpha+\sqrt{\alpha/2})n$. 
\begin{defn}
\label{def:extremal}Let $\eta>0$ and $-1/2\le\alpha\le1/2$, and
let $G$ be a graph on $n$ vertices with $\delta(G)=\left(1/2+\alpha\right)n$.
Recall that $\alpha_{+}=\max\{\alpha,0\}$. We say that $G$ is \emph{$\eta$-extremal}
if there exist disjoint $A,B\subset V(G)$ such that 
\begin{enumerate}
\item [(E1)]$|A|=(1/2-\sqrt{\alpha_{+}/2}\pm\eta)n$;
\item [(E2)]$|B|=(1/2+\sqrt{\alpha_{+}/2}\pm\eta)n$;
\item [(E3)]$e(A,B)>(1-\eta)|A||B|$; 
\item [(E4)]$e(B)<(\alpha_{+}+\sqrt{\alpha_{+}/2}+\eta)n|B|/2$.
\end{enumerate}
\end{defn}
Note that (E1) and (E2) together imply
\begin{enumerate}
\item [(E5)]$n-|A\cup B|\le2\eta n$.
\end{enumerate}
The following result states that if $G$ is $\eta$-extremal, then
$G[B]$ is {}``almost regular''.
\begin{lem}
\label{lem:almostregular}Suppose $0<\eta\ll\alpha,1/2-\alpha<1/2$.
Suppose $G$ is an $\eta$-extremal graph on $n$ vertices, with $\delta(G)=\left(1/2+\alpha\right)n$,
and let $A,B\subset V(G)$ be as in Definition~\ref{def:extremal}.
\begin{enumerate}
\item For all vertices $v\in B$, we have $d_{B}(v)\ge(\alpha+\sqrt{\alpha/2}-3\eta)n$.
\item For all but at most $2\sqrt{\eta}n$ vertices $v\in B$, we have $d_{B}(v)\le(\alpha+\sqrt{\alpha/2}+2\sqrt{\eta})n$.
\end{enumerate}
\end{lem}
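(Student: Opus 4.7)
The plan is a direct calculation for (i) and a short averaging argument for (ii). Both parts use only (E1), (E4), (E5), and the minimum degree hypothesis; neither uses (E3).

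For (i), fix $v\in B$. Since $d(v)\ge\delta(G)=(1/2+\alpha)n$ and the neighbours of $v$ lie in $A$, in $B$, or in $V(G)\setminus(A\cup B)$, I would apply the bound $|A|\le (1/2-\sqrt{\alpha/2}+\eta)n$ from (E1) and $|V(G)\setminus(A\cup B)|\le 2\eta n$ from (E5) to obtain
\begin{equation*}
 d_B(v)\ge (1/2+\alpha)n - (1/2-\sqrt{\alpha/2}+\eta)n - 2\eta n = (\alpha+\sqrt{\alpha/2}-3\eta)n,
\end{equation*}
as required.

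For (ii), write $c:=\alpha+\sqrt{\alpha/2}$. By (E4) we have $\sum_{v\in B}d_B(v) = 2e(B) < (c+\eta)n|B|$. Suppose for contradiction that more than $2\sqrt{\eta}n$ vertices $v\in B$ satisfy $d_B(v)>(c+2\sqrt{\eta})n$; call this set $B^{*}$. Summing the trivial lower bound on $B^{*}$ together with the lower bound from part (i) applied on $B\setminus B^{*}$, one finds that $\sum_{v\in B}d_B(v)$ is bounded below by $2\sqrt{\eta}n(c+2\sqrt{\eta})n+(|B|-2\sqrt{\eta}n)(c-3\eta)n$. Expansion and cancellation of the $\pm 2c\sqrt{\eta}n^{2}$ terms reduces this to $(c+\eta)n|B|+4\eta n(n-|B|)+6\eta\sqrt{\eta}n^{2}$, which strictly exceeds $(c+\eta)n|B|$ because $|B|\le n$, contradicting (E4).

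The argument is not deep: the $O(\sqrt{\eta})$ margin built into the target upper bound swamps the $O(\eta)$ losses arising from (i) and (E4). The one point that needs care is that the residue after cancellation be strictly positive, which the $6\eta\sqrt{\eta}n^{2}$ term guarantees independently of the precise value of $|B|$; no further use of the hierarchy $\eta\ll\alpha$ is needed beyond ensuring $\alpha>0$ so that $\sqrt{\alpha/2}$ is meaningful and $c$ is positive.
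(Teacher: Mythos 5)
Your proof is correct and follows essentially the same route as the paper's: part (i) is the identical degree count via (E1) and (E5), and part (ii) is the same averaging argument, contradicting (E4) by splitting $B$ into the high-degree set and its complement and applying (i) to the latter. The paper phrases the lower bound slightly differently (lower bound all of $B$ by (i), then add the excess $\ge 2\sqrt{\eta}n\cdot 2\sqrt{\eta}n$ from the high-degree vertices) but the computation is the same; your observation that (E3) plays no role is correct.
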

\begin{proof}
(i) immediately follows from (E1) and (E5). Indeed, for all $v\in B$,
we have\begin{eqnarray}
d_{B}(v) & \ge & \delta(G)-d_{A}(v)-d_{\overline{A\cup B}}(v)\stackrel{\textnormal{(E5)}}{\ge}\delta(G)-|A|-2\eta n\nonumber \\
 & \stackrel{\textnormal{(E1)}}{\ge} & \left(\alpha+\sqrt{\frac{\alpha}{2}}-3\eta\right)n,\label{eq:(*)}\end{eqnarray}
as desired. 

Suppose (ii) fails. Then there exist at least $2\sqrt{\eta}n$ vertices
in $B$ with degree greater than $(\alpha+\sqrt{\alpha/2}+2\sqrt{\eta})n$
in $B$. We therefore have\begin{align*}
e_{G}(B) & =\frac{1}{2}\sum_{v\in B}d_{B}(v)\stackrel{(\ref{eq:(*)})}{>}\frac{1}{2}\left(\left(\alpha+\sqrt{\frac{\alpha}{2}}-3\eta\right)n|B|+2\sqrt{\eta}n\cdot2\sqrt{\eta}n\right)\\
 & \ge\frac{1}{2}\left(\alpha+\sqrt{\frac{\alpha}{2}}+\eta\right)n|B|.\end{align*}
But this contradicts (E4), so (ii) must hold.
\end{proof}

\section{The near-extremal case}\label{sec:near}

Suppose that $0<1/n\ll\eta\ll\alpha<1/2$, and that $G$ is an $\eta$-extremal
graph on $n$ vertices with $\delta(G)=(1/2+\alpha)n$. Recall that
our aim in this case is to show that $G$ contains a factor of degree $\textnormal{reg}_{\textnormal{even}}(n,\delta)/2$
which is a robust expander.
Let $A,B\subset V(G)$ be as in Definition~\ref{def:extremal}. We will
first show that $G$ contains a spanning subgraph $G_{1}$ which
is close to $G_{n,\delta,\textnormal{ext}}$ and satisfies $\delta(G_{1})=\delta(G)$.
\begin{lem}\label{lem:closesubgraph}
Suppose $0<1/n\ll\eta\ll1/C\ll1/2-\alpha\le1/2$,
so that in particular $0\le\alpha<1/2$. Let $G$ be an $\eta$-extremal
graph on $n$ vertices with $\delta:=\delta(G)=\left(1/2+\alpha\right)n$,
and let $A,B\subset V(G)$ be as in Definition~\ref{def:extremal}. Then
there exists a spanning subgraph $G_{1}$ of $G$ which satisfies the following properties: 
\begin{enumerate}
\item $A$ and $B$ satisfy (E1)--(E4) for the graph $G_{1}$. In particular,
$G_{1}$ is $\eta$-extremal.
\item $\delta(G_{1})=\delta$.
\item $e_{G_{1}}(A)<C\eta|A|^{2}$.
\end{enumerate}
\end{lem}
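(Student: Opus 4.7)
The plan is to obtain $G_1$ from $G$ by greedy edge-deletion inside $A$. Initialise $G_1 := G$, and while there is an edge $uv \in E(G_1[A])$ with both $d_{G_1}(u) > \delta$ and $d_{G_1}(v) > \delta$, delete $uv$; stop when no such edge remains. Since only edges of $G[A]$ are ever removed, the quantities $|A|$, $|B|$, $e(A,B)$ and $e(B)$ are unchanged, so $A,B$ inherit (E1)--(E4) from $G$, which is (i). The deletion rule preserves the invariant $\delta(G_1) \ge \delta$; moreover, any vertex $v_0$ with $d_G(v_0) = \delta$ retains $d_{G_1}(v_0) = \delta$, since either $v_0 \notin A$ (in which case no incident edge is ever considered) or $v_0 \in A$ and $d_{G_1}(v_0) \le d_G(v_0) = \delta$ throughout, so no incident edge is ever eligible for removal. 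This yields (ii).

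For (iii), let $A_0 := \{v \in A : d_{G_1}(v) = \delta\}$. By the stopping rule, every edge of $G_1[A]$ has at least one endpoint in $A_0$, so
\[
e_{G_1}(A) \le \sum_{v \in A_0} d_{G_1,A}(v).
\]
For $v \in A_0$ the edges of $G_1$ leaving $A$ agree with those of $G$, so $d_{G_1,A}(v) = \delta - d_B(v) - d_{\overline{A \cup B}}(v) \le \delta - d_B(v)$. Expanding the sum, bounding $d_B(v) \le |B|$ on $A \setminus A_0$, and invoking (E3) to bound $\sum_{v \in A}(|B| - d_B(v)) = |A||B| - e_G(A,B) < \eta|A||B|$, one obtains
\[
e_{G_1}(A) < |A_0|(\delta - |B|) + \eta|A||B|.
\]

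Finally, because $\alpha \le \sqrt{\alpha_+/2}$ on $[0, 1/2]$ (equivalently, $2\alpha^2 \le \alpha$), (E2) gives $\delta - |B| \le (\alpha - \sqrt{\alpha_+/2} + \eta)n \le \eta n$; combined with $|B| \le n$, this yields $e_{G_1}(A) < 2\eta n|A|$. From (E1) and the elementary estimate $\sqrt{1-x} \le 1 - x/2$ one obtains $|A| \ge (1/2 - \alpha)n/2 - \eta n$, so that $n/|A| \le 4/(1/2-\alpha)$ for $\eta$ sufficiently small; hence $e_{G_1}(A) < 8\eta|A|^2/(1/2-\alpha) \le C\eta|A|^2$ as required, thanks to the hierarchy $1/C \ll 1/2-\alpha$. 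The main obstacle is conceptual: convincing oneself that the greedy rule simultaneously thins $G[A]$ while preserving the minimum degree and allows a clean bound via the tight vertices $A_0$. Everything after that is bookkeeping around the $\eta$-extremal parameters.
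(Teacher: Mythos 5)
Your proof is correct and follows essentially the same strategy as the paper's: the same greedy deletion of edges inside $A$ with both endpoints of degree above $\delta$, the same observation that every surviving edge of $G_1[A]$ has an endpoint of degree exactly $\delta$, and the same use of (E2)--(E3) to show such tight vertices have few $G_1$-neighbours inside $A$. The only difference is presentational: the paper runs the count as a contradiction argument on $e_G(A,B)$, while you bound $e_{G_1}(A)$ directly via the tight set $A_0$, which is a slightly cleaner reorganisation of the same double-count. (Minor nit: the bound $|A| \ge ((1/2-\alpha)/2 - \eta)n$ is correct, but it follows from $\sqrt{\alpha/2} \le 1/4 + \alpha/2$, i.e.~$(4\cdot\alpha/2 - 1)^2 \ge 0$, rather than from $\sqrt{1-x} \le 1-x/2$ as you cite.)
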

\begin{proof}
We will define $G_{1}$ using a greedy algorithm. Initially, let $G_{1}:=G$.
Suppose that $G_{1}[A]$ contains an edge $xy$ such that $d_{G_{1}}(x),d_{G_{1}}(y)>\delta$.
Then remove $xy$ from $G_{1}$, and continue in this way until $G_{1}$
contains no such edge. Note that we have $\delta(G_{1})=\delta$,
and (E1)--(E4) are not affected by these edge deletions, so $G_{1}$
satisfies (i) and (ii). 

Suppose $e_{G_{1}}(A)\ge C\eta|A|^{2}$, and note that we have\[
\delta=\left(\frac{1}{2}+\alpha\right)n\le\left(\frac{1}{2}+\sqrt{\frac{\alpha}{2}}\right)n\stackrel{\textnormal{(E2)}}{\le}|B|+\eta n.\]
(Indeed, $x\le\sqrt{x/2}$ for all $0\le x\le1/2$.) If $v\in A$
is a vertex with $d_{G_{1}}(v)=\delta$, we therefore have\[
d_{G[A,B]}(v)=d_{G_{1}[A,B]}(v)\le \delta-d_{G_{1}[A]}(v)\le|B|+\eta n-d_{G_{1}[A]}(v).\]
Each edge in $G_{1}[A]$ must have at least one endpoint with degree
$\delta$ in $G_{1}$, so%
\COMMENT{Note that the second inequality need not be an equality, e.g. if an
edge in $A$ has two endpoints of degree $\delta$.}
\begin{align*}
e_{G}(A,B) & =\sum_{v\in A}d_{G[A,B]}(v)\le|A||B|-\sum_{v\in A,\, d_{G_{1}}(v)=\delta}\left(d_{G_{1}[A]}(v)-\eta n\right)\\
 & \le|A||B|+\eta n^{2}-e_{G_{1}}(A)\le|A|\left(|B|+\eta\frac{n^{2}}{|A|}-C\eta|A|\right).\end{align*}
Since $1/C\ll1/2-\alpha$, we have $C|A|\ge2|B|+n^{2}/|A|$
by (E1) and (E2). Hence\[
e_{G}(A,B)\le|A|(|B|-2\eta|B|)=(1-2\eta)|A||B|,\]
which contradicts (E3). We therefore have $e_{G_1}(A)<C\eta|A|^{2}$,
and so $G_{1}$ satisfies (iii) as desired.
\end{proof}
Let $G_{1}$ be as in Lemma~\ref{lem:closesubgraph}, and let $G_{2}$
be a degree-maximal even factor of $G_{1}$. (So $G_{2}$ is an even-regular
spanning subgraph of $G_1$ whose degree is as large as possible.) By Theorem~\ref{thm:regbounds},
we have that\begin{equation}
d(G_{2})\ge\textnormal{reg}_{\textnormal{even}}(n,\delta)\ge\frac{n}{4}+\frac{\alpha n}{2}+\sqrt{\frac{\alpha}{2}}n-2.\label{eq:xx}\end{equation}
It can be shown that any degree-maximal even factor of $G_{n,\delta,{\rm ext}}$ contains almost all edges inside
the larger vertex class~$B$.%
    \COMMENT{TO DO: check}
The following lemma uses a similar argument to prove a similar statement for~$G_1$.
\begin{lem}
\label{lem:stillalmostregular}Suppose $0<1/n\ll\eta\ll1/C\ll\alpha,1/2-\alpha<1/2$.
Suppose that $G$ is an $\eta$-extremal graph on $n$ vertices with $\delta(G)=(1/2+\alpha)n$.
Let $G_{1}$ be the graph obtained by applying Lemma~\ref{lem:closesubgraph}
to $G$, and let $G_{2}$ be a degree-maximal even factor of $G_{1}$. Let
$A,B\subset V(G)$ be as in Definition~\ref{def:extremal}. Then for all
but at most $3\eta^{1/4}n$ vertices $v\in B$, we have
\[
d_{G_{2}[B]}(v)\ge\left(\alpha+\sqrt{\frac{\alpha}{2}}-3\eta^{\frac{1}{4}}\right)n.\]
\end{lem}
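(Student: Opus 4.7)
The plan is to sandwich the sum $\sum_{v\in B} d_{G_2[B]}(v) = 2 e_{G_2}(B)$ between a lower bound from edge-counting in the regular graph $G_2$ (matching the value attained in $G_{n,\delta,\textnormal{ext}}$) and an upper bound from Lemma~\ref{lem:almostregular}(ii). These two bounds are tight enough that only very few vertices of $B$ can have $d_{G_2[B]}(v)$ far below the average.

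For the lower bound, write $C := V(G)\setminus(A\cup B)$, so $|C| \le 2\eta n$ by (E5). Since $G_2$ is $d(G_2)$-regular,
\[
|B|\,d(G_2) \;=\; 2e_{G_2}(B) + e_{G_2}(A,B) + e_{G_2}(B,C).
\]
I bound $e_{G_2}(A,B) \le |A|\,d(G_2)$ (each vertex of $A$ has exactly $d(G_2)$ neighbours in $G_2$) and $e_{G_2}(B,C) \le |C|\,d(G_2) \le 2\eta n^2$. Substituting the values of $|A|$, $|B|$ from (E1), (E2) and the lower bound $d(G_2) \ge (1/4+\alpha/2+\sqrt{\alpha/2})n - 2$ from~(\ref{eq:xx}), together with the algebraic identity $2\beta(1/4+\beta^2+\beta) = (1/2+\beta)(2\beta^2+\beta)$ in $\beta := \sqrt{\alpha/2}$, the main terms reorganize to give
\[
2e_{G_2}(B) \;\ge\; |B|\bigl(\alpha+\sqrt{\alpha/2}\bigr)n - K\eta n^2
\]
for some constant $K=K(\alpha)$.

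For the upper bound, Lemma~\ref{lem:almostregular}(ii) provides a set $B' \subset B$ with $|B'| \le 2\sqrt{\eta}n$ such that $d_{G_2[B]}(v) \le d_{G[B]}(v) \le (\alpha+\sqrt{\alpha/2}+2\sqrt{\eta})n$ for every $v \in B\setminus B'$. Suppose for contradiction that $B_{\text{bad}} := \{v\in B : d_{G_2[B]}(v) < (\alpha+\sqrt{\alpha/2}-3\eta^{1/4})n\}$ has $|B_{\text{bad}}| \ge 3\eta^{1/4}n$. Then $B_{\text{bad}} \cap B' = \emptyset$ (small vs.\ large degree), so $B = B_{\text{bad}} \sqcup B_{\text{mid}} \sqcup B'$ where $B_{\text{mid}} := B\setminus(B_{\text{bad}}\cup B')$. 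Estimating $d_{G_2[B]}(v)$ by its defining inequality on $B_{\text{bad}}$, by the previous display on $B_{\text{mid}}$, and trivially by $|B|\le n$ on $B'$, and then adding $|B'|(\alpha+\sqrt{\alpha/2})n$ to the first two contributions so as to combine them into $|B|(\alpha+\sqrt{\alpha/2})n$, I get
\[
\sum_{v\in B} d_{G_2[B]}(v) \;\le\; |B|\bigl(\alpha+\sqrt{\alpha/2}\bigr)n + 4\sqrt{\eta}n^2 - 3|B_{\text{bad}}|\eta^{1/4}n \;\le\; |B|\bigl(\alpha+\sqrt{\alpha/2}\bigr)n - 5\sqrt{\eta}n^2,
\]
using $|B_{\text{bad}}|\ge 3\eta^{1/4}n$ in the final step. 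Comparing with the lower bound forces $5\sqrt{\eta} \le K\eta$, contradicting $\eta \ll 1$.

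The main delicacy is the calibration of the exponent of $\eta$ in the statement: a per-vertex deficit of $3\eta^{1/4}n$ on at least $3\eta^{1/4}n$ vertices yields a total defect of at least $9\sqrt{\eta}n^2$, which is just large enough to dominate both the $2\sqrt{\eta}n\cdot|B|$ slack inherited from the almost-regular bulk of $B$ in Lemma~\ref{lem:almostregular}(ii) and the $|B'|\cdot n \le 2\sqrt{\eta}n^2$ slack from the exceptional set, while comfortably absorbing the $O(\eta)n^2$ error term in the lower bound on $2e_{G_2}(B)$. Any coarser exponent would fail to beat these $\sqrt{\eta}$-scale secondary terms, and no finer exponent is accessible from the edge-count alone, so $\eta^{1/4}$ is essentially optimal for this argument.
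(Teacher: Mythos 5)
Your proof is correct and follows essentially the same approach as the paper: both double-count the $G_2$-edges incident to $B$, combining the lower bound $d(G_2) \ge \textnormal{reg}_{\textnormal{even}}(n,\delta)$ from~(\ref{eq:xx}) with the degree cap from Lemma~\ref{lem:almostregular}(ii) to reach a contradiction when too many vertices of $B$ have small $G_2[B]$-degree. The only difference is organizational: the paper isolates $d(G_2)$ and shows it would fall below $\textnormal{reg}_{\textnormal{even}}(n,\delta)$, while you sandwich $2e_{G_2}(B)$ between incompatible lower and upper bounds -- a cosmetic rearrangement of the same calculation.
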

\begin{proof}
Let $r$ be the degree of $G_{2}$. Suppose that $d_{G_{2}[B]}(v)<(\alpha+\sqrt{\alpha/2}-3\eta^{1/4})n$
for more than $3\eta^{1/4}n$ vertices. Then by Lemma~\ref{lem:almostregular}(ii),
we have\begin{align*}
r|B| & =\sum_{v\in B}d_{G_{2}}(v)=e_{G_{2}}(A,B)+2e_{G_{2}}(B)\\
 & \le r|A|+\left(\alpha+\sqrt{\frac{\alpha}{2}}+2\sqrt{\eta}\right)n|B|+4\sqrt{\eta} n^{2}-3\eta^{\frac{1}{4}}n\cdot 3\eta^{\frac{1}{4}}n\\
 & \le r|A|+\left(\alpha+\sqrt{\frac{\alpha}{2}}-3\sqrt{\eta}\right)n|B|.\end{align*}
Since $|B|-|A|\ge(\sqrt{2\alpha}-2\eta)n$ by (E1) and (E2), it follows
that\[
\sqrt{2\alpha}rn\le\left(\alpha+\sqrt{\frac{\alpha}{2}}-3\sqrt{\eta}\right)n|B|+2\eta n^{2},\]
and hence\begin{eqnarray*}
r & \le & \left(\sqrt{\frac{\alpha}{2}}+\frac{1}{2}-3\sqrt{\frac{\eta}{2\alpha}}\right)|B|+\eta\sqrt{\frac{2}{\alpha}}n\\
 & \stackrel{\textnormal{(E2)}}{\le} & \left(\sqrt{\frac{\alpha}{2}}+\frac{1}{2}-3\sqrt{\frac{\eta}{2\alpha}}\right)
\left(\frac{1}{2}+\sqrt{\frac{\alpha}{2}}+\eta\right)n+\eta\sqrt{\frac{2}{\alpha}}n\\
 & \le & \left(\frac{1}{4}+\frac{\alpha}{2}+\sqrt{\frac{\alpha}{2}}-\eta^{3/4}\right)n.\end{eqnarray*}
(In the last inequality we used that $\eta\ll \alpha$.)
It therefore follows from (\ref{eq:xx}) that $r<{\rm reg}_{\rm even}(n,\delta)$.
But $G_{2}$ was chosen to be degree-maximal, so this is a contradiction.
\end{proof}
We now collect some robust expansion properties of $G_{2}$. For convenience,
if $X\subset V(G_{2})$, we shall write $X_{A}:=X\cap A$ and $X_{B}:=X\cap B$.
In particular, if $S\subset V(G)$ then (for example) $RN_{\nu}(S_{A})_{B}=RN_{\nu}(S\cap A)\cap B$.
\begin{lem}
\label{lem:expansionprops}Suppose that $0<1/n\ll\nu\ll\eta\ll\mu\ll\tau\ll\lambda\ll1/C\ll\alpha,1/2-\alpha<1/2$.
Suppose that $G$ is an $\eta$-extremal graph on $n$ vertices with $\delta(G)=\left(1/2+\alpha\right)n$.
Let $G_{1}$ be the graph obtained by applying Lemma~\ref{lem:closesubgraph}
to $G$, and let $G_{2}$ be a degree-maximal even factor of $G_{1}$. Let
$A,B\subset V(G)$ be as in Definition~\ref{def:extremal}. Then in the
graph $G_{2}$, the following properties all hold.
\begin{enumerate}
\item If $S\subset A$ with $|S|\ge|A|/2$, then $|RN_{\nu}(S)_{B}|\ge(1-\mu)|B|$.
\item If $S\subset B$ with $|S|\ge|B|/2$, then $|RN_{\nu}(S)_{A}|\ge(1-\mu)|A|$.
\item If $S\subset A$ with $|S|\ge \tau n/3$, then $|RN_{\nu}(S)_{B}|\ge|B|/2+\lambda n$.
\item If $S\subset B$ with $|S|\ge \tau n/3$, then $|RN_{\nu}(S)_{A}|\ge|A|/2+\lambda n$.
\item If $S\subset B$, then $|RN_{\nu}(S)_{B}|\ge|S|-\mu n$.
\end{enumerate}
\end{lem}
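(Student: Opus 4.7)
The plan is to prove all five parts via a common double-counting scheme, so I first collect the key ingredients derived from earlier results. Let $r$ denote the degree of $G_2$; by $(\ref{eq:xx})$, $r \geq n/4 + \alpha n/2 + \sqrt{\alpha/2}n - 2$. Lemmas~\ref{lem:almostregular} and~\ref{lem:stillalmostregular} together imply that all but at most $5\eta^{1/4}n$ vertices $v\in B$ are \emph{typical}, in the sense that $(\alpha + \sqrt{\alpha/2} - 3\eta^{1/4})n \leq d_{G_2, B}(v) \leq d_{G, B}(v) \leq (\alpha + \sqrt{\alpha/2} + 2\sqrt{\eta})n$. Subtracting from $r$ and comparing with $|A|/2 = (1/4 - \sqrt{\alpha/2}/2)n \pm \eta n/2$, every typical $v \in B$ satisfies
\[ d_{G_2, A}(v) = |A|/2 + c_\alpha n \pm O(\eta^{1/4}n), \qquad \text{where } c_\alpha := (\sqrt{\alpha/2} - \alpha)/2. \]
The crucial structural fact is $c_\alpha > 0$ for $0 < \alpha < 1/2$ (since $\sqrt{\alpha/2} > \alpha$), so typical vertices of $B$ have significantly more than $|A|/2$ neighbours in $A$ within $G_2$. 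Finally, Lemma~\ref{lem:closesubgraph}(iii) gives $e_{G_2}(A) \leq e_{G_1}(A) < C\eta|A|^2$.

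Each of (i)--(v) then follows from a short edge-count. For (i), with $T := B \setminus RN_\nu(S)_B$: each $v \in T$ has $d_{G_2, S}(v) < \nu n$, hence $d_{G_2, A}(v) < \nu n + |A \setminus S| \leq \nu n + |A|/2$. Since $\nu \ll c_\alpha$, this contradicts the lower bound $|A|/2 + c_\alpha n/2$ available for typical $v \in B$, so $T$ contains no typical vertex and $|T| \leq 5\eta^{1/4}n \leq \mu|B|$. For (ii), with $T := A \setminus RN_\nu(S)_A$: each $v \in T$ has $d_{G_2, B}(v) < \nu n + |B|/2$, so $d_{G_2, A}(v) > r - \nu n - |B|/2 \geq (\alpha + \sqrt{\alpha/2})n/2 - O(\eta n) - \nu n$; summing over $T$ gives $|T| \cdot c'(\alpha) n \leq 2e_{G_2}(A) < 2C\eta|A|^2$ for some $c'(\alpha) > 0$, hence $|T| \leq \mu|A|$. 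For (iii), with $R := RN_\nu(S)_B$: double-counting $e_{G_2}(S, B) = |S|r - \sum_{v \in S}d_{G_2, A}(v) \geq |S|r - 2C\eta|A|^2$ against $e_{G_2}(S, B) \leq |S||R| + |B|\nu n$ (using $|S| \geq \tau n/3$) yields $|R| \geq r - O(\eta n/\tau) \geq |B|/2 + \lambda n$. For (iv), with $R := RN_\nu(S)_A$: the typical-degree estimate gives $e_{G_2}(S, A) \geq (|S|-5\eta^{1/4}n)(|A|/2+c_\alpha n/2)$, and comparison with $e_{G_2}(S, A) \leq |S||R| + |A|\nu n$ forces $|R| \geq |A|/2 + c_\alpha n/2 - O(\eta^{1/4}n/\tau) \geq |A|/2 + \lambda n$. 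Finally, for (v), I double-count $\sum_{v \in S} d_{G_2, B}(v) = \sum_{u \in B} d_{G_2, S}(u)$ with $X := RN_\nu(S)_B$: the lower bound $(|S|-5\eta^{1/4}n)(\alpha + \sqrt{\alpha/2} - 3\eta^{1/4})n$ from the typical inequality and the upper bound $|X|(\alpha + \sqrt{\alpha/2} + 2\sqrt{\eta})n + O(\eta^{1/4}n^2) + |B|\nu n$ have matching leading terms, which cancel to give $(|S| - |X|)(\alpha + \sqrt{\alpha/2}) \leq O(\eta^{1/4}n)$, i.e.\ $|S| - |X| \leq \mu n$.

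The main obstacle is bookkeeping: ensuring that the various error terms -- the $5\eta^{1/4}n$ atypical vertices of $B$, the $C\eta|A|^2$ edges inside $A$, and the $\nu n$ slack inherent to $RN_\nu$ -- all remain negligible compared to the positive gaps $c_\alpha n$, $(\alpha+\sqrt{\alpha/2})n/2$, $\lambda n$, $\mu n$ that drive each bound. These dominations are enforced precisely by the hierarchy $\nu \ll \eta \ll \mu \ll \tau \ll \lambda \ll 1/C \ll \alpha, 1/2 - \alpha$; in particular $c_\alpha$ is bounded below by a positive function of $\alpha$ and $1/2 - \alpha$, so $\lambda$ can be chosen smaller than $c_\alpha/4$, and then $\eta$ and $\nu$ smaller than any required fixed polynomial in $\mu, \tau, \lambda, 1/C$.
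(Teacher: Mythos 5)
Your proposal is correct and matches the paper's approach essentially verbatim: in each of (i)--(v) you double-count edges using the near-regularity of $G_2$ restricted to $B$ (Lemmas~\ref{lem:almostregular} and~\ref{lem:stillalmostregular}), the sparsity of $G_2[A]$ (Lemma~\ref{lem:closesubgraph}(iii)), and the positive gap $\sqrt{\alpha/2}-\alpha>0$, exactly as in the paper. One small overstatement: you assert a two-sided estimate $d_{G_2,A}(v)=|A|/2+c_\alpha n\pm O(\eta^{1/4}n)$ for typical $v\in B$, but the upper half would require an explicit upper bound on $r=d(G_2)$ that you have not established; since you only ever use the lower bound, this does not affect the argument.
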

\begin{proof}
Write $d:=d(G_{2})$. We first prove (i). Suppose $S\subset A$ with
$|S|\ge|A|/2$. Lemma~\ref{lem:almostregular}(ii) implies that in $G_2$
all but at most $2\sqrt{\eta}n\le\mu |B|$ vertices $v\in B$
satisfy\begin{eqnarray*}
d_{A}(v) & = & d-d_{\overline{A\cup B}}(v)-d_{B}(v)\\
 & \stackrel{\textnormal{(\ref{eq:xx})},\textnormal{(E5)}}{\ge} & \left(\frac{1}{4}+\frac{\alpha}{2}+\sqrt{\frac{\alpha}{2}}\right)n-2-2\eta n-\left(\alpha+\sqrt{\frac{\alpha}{2}}+2\sqrt{\eta}\right)n\\
 & \ge & \left(\frac{1}{4}-\frac{\alpha}{2}-3\sqrt{\eta}\right)n\ge\left(\frac{1}{4}-\frac{1}{2}\sqrt{\frac{\alpha}{2}}+\eta\right)n+\nu n\stackrel{\textnormal{(E1)}}{\ge}\frac{|A|}{2}+\nu n,\end{eqnarray*}
where the third inequality follows since $x<\sqrt{x}/2$ for all $0<x<1/4$.
Thus in the graph $G_{2}$ we have $|N_{A}(v)\cap S|\ge\nu n$, and
hence $v\in RN_{\nu}(S)$, for each such $v$. The result therefore
follows.

We now prove (ii). Suppose $S\subset B$ with $|S|\ge|B|/2$. Let
$A'\subset A$ be the set of vertices which in $G_2$ have less than
$|B|/2+\nu n$ neighbours inside $B$. Each vertex $v\in A'$ must satisfy\begin{eqnarray*}
d_{A}(v) & = & d-d_{\overline{A\cup B}}(v)-d_{B}(v)\\
 & \stackrel{\textnormal{(\ref{eq:xx})},\textnormal{(E5)}}{\ge} & \left(\frac{1}{4}+\frac{\alpha}{2}+\sqrt{\frac{\alpha}{2}}\right)n-2-2\eta n-\frac{|B|}{2}-\nu n\\
 & \stackrel{\textnormal{(E2)}}{\ge} & \left(\frac{1}{4}+\frac{\alpha}{2}+\sqrt{\frac{\alpha}{2}}-2\eta-\nu\right)n-2-\left(\frac{1}{4}+\frac{1}{2}\sqrt{\frac{\alpha}{2}}+\eta\right)n\\
 & \ge & \frac{\alpha}{2}n,\end{eqnarray*}
and so we have $e_{G_2}(A)\ge\alpha n|A'|/4$. But by Lemma~\ref{lem:closesubgraph}(iii)
we have $e_{G_2}(A)\le e_{G_1}(A)<C\eta|A|^{2}$. Therefore \[
|A'|\le\frac{4C\eta}{\alpha}\cdot\frac{|A|^{2}}{n}\le\sqrt{\eta}\frac{|A|^{2}}{n}\le\sqrt{\eta}|A|\le\mu|A|.\]

However, our assumption that $|S|\ge|B|/2$ and the definition of
$A'$ together imply that every vertex $v\in A\setminus A'$ satisfies $|N_{B}(v)\cap S|\ge\nu n$.
Therefore $|RN_{\nu}(S)|\ge|A\setminus A'|\ge(1-\mu)|A|$, as required.

We now prove (iii). Suppose $S\subset A$ with $|S|\ge \tau n/3$.
Then we double-count the edges between $S$ and $B$ in $G_{2}$.
The definition of a robust neighbourhood implies that
\begin{align*}
e_{G_{2}}(S,B) & =e_{G_{2}}(S,RN_{\nu}(S)_{B})+e_{G_{2}}(S,B\setminus RN_{\nu}(S)_{B})\le|S||RN_{\nu}(S)_{B}|+\nu n^{2}.\end{align*}
On the other hand, Lemma~\ref{lem:closesubgraph}(iii) implies that\begin{align*}
e_{G_{2}}(S,B) & \ge d|S|-2e_{G_{2}}(S,A)-e_{G_{2}}(S,\overline{A\cup B})\stackrel{\textnormal{(E5)}}{\ge}d|S|-2C\eta|A|^{2}-2\eta n^{2}\\
 & \ge d|S|-3C\eta n^{2}.\end{align*}
Combining the two inequalities yields\begin{eqnarray*}
|RN_{\nu}(S)_{B}| & \ge & d-3C\eta\frac{n^{2}}{|S|}-\nu\frac{n^{2}}{|S|}\\
 & \stackrel{(\ref{eq:xx})}{\ge} & \left(\frac{1}{4}+\frac{\alpha}{2}+\sqrt{\frac{\alpha}{2}}\right)n-2-\frac{9C\eta}{\tau}n-\frac{3\nu}{\tau}n\\
 & \stackrel{\textnormal{(E2)}}{\ge} & \frac{|B|}{2}+\left(\frac{\alpha}{2}+\frac{1}{2}\sqrt{\frac{\alpha}{2}}-\eta-\frac{9C\eta}{\tau}-\frac{3\nu}{\tau}\right)n-2\ge\frac{|B|}{2}+\frac{\alpha}{2}n,\end{eqnarray*}
and so the result follows.

We now prove (iv). Suppose $S\subset B$ with $|S|\ge \tau n/3$.
Then we double-count the edges between $S$ and $A$ in $G_{2}$.
As before, we have \begin{equation}
e_{G_{2}}(S,A)\le|S||RN_{\nu}(S)_{A}|+\nu n^{2}.\label{eq:***}\end{equation}
On the other hand, \[
e_{G_{2}}(S,A)\ge d|S|-\sum_{v\in S}d_{B}(v)-\sum_{v\in S}d_{\overline{A\cup B}}(v)\stackrel{\textnormal{(E5)}}{\ge}
d|S|-\sum_{v\in S}d_{B}(v)-2\eta n^{2}.\]
Lemma~\ref{lem:almostregular}(ii) implies that\[
\sum_{v\in S}d_{B}(v)\le2\sqrt{\eta}n^{2}+\left(\alpha+\sqrt{\frac{\alpha}{2}}+2\sqrt{\eta}\right)n|S|,\]
and so\begin{eqnarray*}
e_{G_2}(S,A) & \stackrel{\textnormal{(\ref{eq:xx})},\textnormal{(E5)}}{\ge} & \left(\frac{1}{4}+\frac{\alpha}{2}+\sqrt{\frac{\alpha}{2}}\right)n|S|-2|S|-\left(\alpha+\sqrt{\frac{\alpha}{2}}+2\sqrt{\eta}\right)n|S|-(2\eta+2\sqrt{\eta})n^{2}\\
 & \ge & \left(\frac{1}{4}-\frac{\alpha}{2}\right)n|S|-5\sqrt{\eta}n^{2}.\end{eqnarray*}
Combining this with (\ref{eq:***}) shows that in $G_{2}$ we
have\begin{align*}
|RN_{\nu}(S)_{A}| & \ge\left(\frac{1}{4}-\frac{\alpha}{2}\right)n-6\sqrt{\eta}\cdot\frac{n^{2}}{|S|}\ge\left(\frac{1}{4}-\frac{\alpha}{2}\right)n-\frac{18\sqrt{\eta}}{\tau}n\\
 & =\left(\frac{1}{4}-\frac{1}{2}\sqrt{\frac{\alpha}{2}}\right)n+\left(\frac{1}{2}\sqrt{\frac{\alpha}{2}}-\frac{\alpha}{2}\right)n-\frac{18\sqrt{\eta}}{\tau}n\\
 & \stackrel{\textnormal{(E1)}}{\ge}\frac{|A|}{2}+\frac{1}{2}\left(\frac{1}{2}\sqrt{\frac{\alpha}{2}}-
\frac{\alpha}{2}\right)n\ge\frac{|A|}{2}+\lambda n,\end{align*}
and so the result follows. (Here we used that $\sqrt{x}/2>x$ for
all $0<x<1/4$.)

Finally, we prove (v). Suppose $S\subset B$. Recall that $e'_{G_2}(S,RN_{\nu}(S)_{B})$ denotes
the number of ordered pairs $(u,v)$ of vertices of $G_{2}$ such
that $uv\in E(G_{2})$, $u\in S$ and $v\in RN_{\nu}(S)_{B}$. (Note
that this may not equal $e(S,RN_{\nu}(S)_{B})$, as we may have $S\cap RN_{\nu}(S)_{B}\ne\emptyset$.)
By Lemma~\ref{lem:almostregular}(ii), counting from $RN_{\nu}(S)_{B}$
yields\[
e'_{G_2}(S,RN_{\nu}(S)_{B})\le\left(\alpha+\sqrt{\frac{\alpha}{2}}+2\sqrt{\eta}\right)n|RN_{\nu}(S)_{B}|+2\sqrt{\eta}n^{2}.\]
By Lemma~\ref{lem:stillalmostregular}, counting from $S$ yields
\begin{align*}
e'_{G_2}(S,RN_{\nu}(S)_{B}) & \ge\left(\alpha+\sqrt{\frac{\alpha}{2}}-3\eta^{\frac{1}{4}}\right)n|S|-3\eta^{\frac{1}{4}}n^{2}-\nu n^{2}.\end{align*}
Combining the two inequalities yields $|RN_{\nu}(S)_{B}|\ge|S|-\mu n$
as desired.
\end{proof}
We are now in a position to prove Theorem~\ref{thm:mainresult} for $\eta$-extremal graphs.
\begin{lem}
\label{lem:mainresultpart1}Suppose $0<1/n\ll\eta\ll\alpha,1/2-\alpha<1/2$.
If $G$ is an $\eta$-extremal graph on $n$ vertices with $\delta:=\delta(G)=(1/2+\alpha)n$,
then $G$ contains at least $\textnormal{reg}_{\textnormal{even}}(n,\delta)/2$
edge-disjoint Hamilton cycles.\end{lem}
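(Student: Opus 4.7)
The plan is to combine the preceding lemmas with Theorem~\ref{thm:hamdecresult}. First, apply Lemma~\ref{lem:closesubgraph} (with an appropriately chosen $C$) to obtain a spanning $\eta$-extremal subgraph $G_1 \subseteq G$ with $\delta(G_1) = \delta$ and $e_{G_1}(A) < C\eta|A|^2$, relative to the same vertex classes $A, B \subseteq V(G)$. Let $G_2$ be a degree-maximal even factor of $G_1$, of degree $r := d(G_2)$; by Theorem~\ref{thm:regbounds}, $r \ge \textnormal{reg}_{\textnormal{even}}(n,\delta)$, which is linear in $n$ in our regime. Provided that $G_2$ is a robust $(\nu,\tau)$-expander for appropriate constants $\nu, \tau$, Theorem~\ref{thm:hamdecresult} decomposes $G_2$ into $r/2 \ge \textnormal{reg}_{\textnormal{even}}(n,\delta)/2$ edge-disjoint Hamilton cycles, all of which lie inside $G$, as required.

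Thus the core task is to verify that $G_2$ is a robust expander. I would fix additional constants satisfying $\nu \ll \eta \ll \mu \ll \tau \ll \lambda \ll 1/C \ll \alpha, 1/2-\alpha$, compatible with Lemma~\ref{lem:expansionprops}, and consider an arbitrary $S \subseteq V(G_2)$ with $\tau n \le |S| \le (1-\tau)n$. Writing $S_A := S \cap A$ and $S_B := S \cap B$, property (E5) gives $|S| \le |S_A| + |S_B| + 2\eta n$. Since $A$ and $B$ are disjoint and $RN_\nu(T) \subseteq RN_\nu(S)$ whenever $T \subseteq S$,
\[
|RN_\nu(S)| \;\ge\; \max_{T\in\{S_A,S_B\}} |RN_\nu(T)_A| \;+\; \max_{T\in\{S_A,S_B\}} |RN_\nu(T)_B|,
\]
which reduces the verification to a case analysis according to whether each of $|S_A|, |S_B|$ lies below $\tau n/3$, between $\tau n/3$ and half of $|A|$ or $|B|$, or above that half; in each case one plugs in the appropriate part(s) of Lemma~\ref{lem:expansionprops}.

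The main obstacle is to secure the required surplus of $\nu n$ over $|S|$ in every case. When both $|S_A| \ge |A|/2$ and $|S_B| \ge |B|/2$, parts~(i) and~(ii) yield $|RN_\nu(S)| \ge (1-\mu)(|A|+|B|) \ge n - 2\mu n$, so the slack in $|S| \le (1-\tau)n$ suffices. When both parts drop below half but remain above $\tau n/3$, parts~(iii) and~(iv) each supply an excess of $\lambda n$. The cases that require the most care are the asymmetric ones: for example, if $|S_B| \ge |B|/2$ while $|S_A| < |A|/2$, then part~(iii) on the $B$-side would give a negative surplus because $|A| < |B|$, so one must combine~(ii) on the $A$-side with~(v) on the $B$-side, drawing the surplus from $|A|$ being a constant fraction of $n$ (using $\mu \ll 1/2-\alpha$). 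Symmetrically, when $|S_A|$ is large but $|S_B|$ is small, part~(i) suffices on the $B$-side, the surplus being extracted from the structural gap $|B|-|A| \ge (\sqrt{2\alpha}-2\eta)n$. In every case the surplus produced is of order $n$ times a constant depending only on $\alpha$, $1/2-\alpha$, $\lambda$ or $\tau$, and hence comfortably exceeds $\nu n$ under the chosen hierarchy. Robust expansion of $G_2$ then follows, and Theorem~\ref{thm:hamdecresult} completes the proof.
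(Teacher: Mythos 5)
Your proposal follows exactly the paper's strategy: apply Lemma~\ref{lem:closesubgraph} to extract a spanning subgraph $G_1$ with the right properties, take a degree-maximal even factor $G_2$ of $G_1$, verify that $G_2$ is a robust $(\nu,\tau)$-expander via a case analysis on $|S_A|$ and $|S_B|$ using Lemma~\ref{lem:expansionprops}, and finish with Theorem~\ref{thm:hamdecresult} together with the lower bound from Theorem~\ref{thm:regbounds}. The only minor divergence is in the subcase $|S_B|\ge|B|/2$, $|S_A|<|A|/2$: you use parts~(ii) and~(v), whereas the paper folds this into its Case~4 ($|S_A|\le|A|/2$, $|S_B|\ge\tau n/3$) handled by parts~(iv) and~(v), which already suffices since (iv) gives $|A|/2+\lambda n\ge|S_A|+\lambda n$; both routes are valid, and your case enumeration, though sketched rather than exhaustive, is correct in substance.
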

\begin{proof}
Let $\tau_{0}:=\tau(1/4)$ be the constant returned by Theorem~\ref{thm:hamdecresult}.
Choose additional constants $\nu,\mu,\tau,\lambda$ and $C$ such that
\[
0<\nu\ll\eta\ll\mu\ll\tau\ll\lambda\ll1/C\ll\alpha,1/2-\alpha,\tau_{0}.\]
Take $A,B\subset V(G)$ as in Definition~\ref{def:extremal}. Apply Lemma~\ref{lem:closesubgraph}
to $G$ and $C$ to obtain a spanning subgraph $G_{1}$. Let $G_{2}$
be a degree-maximal even factor of $G_{1}$. Note that Lemma~\ref{lem:expansionprops}
may be applied to $G_{2}$. 

\medskip

\noindent\textbf{Claim: }\emph{$G_{2}$ is a robust $(\nu,\tau)$-expander.} 

Note that the claim immediately implies the desired result. Indeed,
any robust $(\nu,\tau)$-expander is also a robust $(\nu,\tau_{0})$-expander,
and so Theorem~\ref{thm:hamdecresult} implies that $G_{2}$ may be
decomposed into Hamilton cycles. Moreover, Lemma~\ref{lem:closesubgraph}
implies that $\delta(G_{1})=\delta$ and so $d(G_{2})\ge\textnormal{reg}_{\textnormal{even}}(n,\delta)$.
Hence the Hamilton decomposition of $G_{2}$ yields the desired collection
of $d(G_{2})/2\ge\textnormal{reg}_{\textnormal{even}}(n,\delta)/2$
edge-disjoint Hamilton cycles.

To prove the claim, consider $S\subset V(G)$ with $\tau n\le|S|\le(1-\tau)n$.
We will use Lemma~\ref{lem:expansionprops} to show that in $G_2$ we have $|RN_{\nu}(S)|\ge|S|+\nu n$.
We will split the proof into cases depending on the sizes of $S_{A}=S\cap A$
and $S_{B}=S\cap B$. Note that $|S_{\overline{A\cup B}}|\le 2\eta n$ by~(E5).

\medskip

\noindent\textbf{Case 1:} $|S_{A}|\le\tau n/3$, $|S_{B}|\le\tau n/3$. 

In this case, we have\[
|S|\stackrel{\textnormal{(E5)}}{\le}\frac{2\tau}{3}+2\eta n<\tau n,\]
which is a contradiction.

\medskip

\noindent\textbf{Case 2:} $\tau n/3\le|S_{A}|\le|A|/2$, $|S_{B}|\le\tau n/3$. 

In this case, by Lemma~\ref{lem:expansionprops}(iii) we have\begin{align*}
|RN_{\nu}(S)| & \ge|RN_{\nu}(S_{A})_{B}|\ge\frac{|B|}{2}+\lambda n
\stackrel{\textnormal{(E5)}}{\ge}\frac{|A|}{2}+\sqrt{\frac{\alpha}{2}}n-2\eta n+\lambda n\\
 & \ge\frac{|A|}{2}+\frac{\tau}{3}n+2\eta n+\nu n\ge|S|+\nu n,\end{align*}
as desired.

\medskip

\noindent\textbf{Case 3:} $|S_{A}|\ge|A|/2$, $|S_{B}|\le\tau n/3$. 

In this case, by Lemma~\ref{lem:expansionprops}(i) we have\begin{align*}
|RN_{\nu}(S)| & \ge|RN_{\nu}(S_{A})_{B}|\ge(1-\mu)|B|\ge|A|+2\sqrt{\frac{\alpha}{2}}n-2\eta n-\mu n\\
 & \ge|A|+\frac{\tau}{3}n+2\eta n+\nu n\ge|S|+\nu n,\end{align*}
as desired.

\medskip

\noindent\textbf{Case 4:} $|S_{A}|\le|A|/2$, $|S_{B}|\ge\tau n/3$. 

In this case, by Lemma~\ref{lem:expansionprops}(iv) and (v), we have\begin{align*}
|RN_{\nu}(S)| & \ge|RN_{\nu}(S_{B})_{A}|+|RN_{\nu}(S_{B})_{B}|\ge\frac{|A|}{2}+\lambda n+|S_{B}|-\mu n\\
 & \ge|S_{A}|+|S_{B}|+2\eta n+\nu n\ge |S|+\nu n,\end{align*}
as desired.

\medskip

\noindent\textbf{Case 5:} $|S_{A}|\ge|A|/2$, $\tau n/3\le|S_{B}|\le|B|/2$. 

In this case, by Lemma~\ref{lem:expansionprops}(i) and (iv), we have\begin{align*}
|RN_{\nu}(S)| & \ge|RN_{\nu}(S_{A})_{B}|+|RN_{\nu}(S_{B})_{A}|\ge|B|+\frac{|A|}{2}+(\lambda-\mu)n\\
 & \ge\frac{|B|}{2}+|A|+(\lambda-\mu)n\ge|S_{B}|+|S_{A}|+2\eta n+\nu n\ge |S|+\nu n,\end{align*}
as desired, where the third inequality follows since $|B|\ge|A|$ by~(E1) and~(E2).

\medskip

\noindent\textbf{Case 6:} $|S_{A}|\ge|A|/2$, $|S_{B}|\ge|B|/2$. 

In this case, by Lemma~\ref{lem:expansionprops}(i) and (ii), we have\begin{eqnarray*}
|RN_{\nu}(S)| & \ge & |RN_{\nu}(S_{A})_{B}|+|RN_{\nu}(S_{B})_{A}|\ge|B|+|A|-2\mu n\\
 & \stackrel{\textnormal{(E5)}}{\ge} & n-(2\eta+2\mu)n\ge(1-\tau)n+\nu n\ge|S|+\nu n,\end{eqnarray*}
as desired.

Thus in all cases we have $|RN_{\nu}(S)|\ge|S|+\nu n$. Indeed, if
$|S_{B}|\le\tau n/3$ this follows by Cases 1, 2 and 3; if $\tau n/3\le|S_{B}|\le|B|/2$
this follows by Cases 4 and 5; and if $|S_{B}|\ge|B|/2$ this follows
by Cases 4 and 6. Hence $G_{2}$ is a robust $(\nu,\tau)$-expander
as desired. This proves the claim and hence the lemma.
\end{proof}

\section{The non-extremal case}\label{sec:far}

Suppose now that $G$ is not $\eta$-extremal. Our first aim is to
find a sparse even factor $H$ of $G$ which is a robust expander. This
has essentially already been done in \cite{Kellyapps}, but for digraphs.
We first require the following definition,
which generalises the notion of robust expanders to digraphs.
\begin{defn}
\label{def:robustoutexpander}Let $D$ be a digraph on $n$ vertices.
Given $0<\nu\le\tau<1$, we define the \emph{$\nu$-robust outneighbourhood}
$RN_{\nu,D}^{+}(S)$ of $S$ to be the set of all vertices $v\in V(D)$
which have at least $\nu n$ inneighbours in $S$. We say that $D$
is a \emph{robust $(\nu,\tau)$-outexpander} if for all $S\subset V(D)$
with $\tau n\le|S|\le(1-\tau)n$, we have $|RN_{\nu,D}^{+}(S)|\ge|S|+\nu n$.
\end{defn}
We will now quote three lemmas from \cite{Kellyapps}. Lemma~\ref{lem:anclem1} implies that
our given graph $G$ is a robust expander. We will use Lemmas~\ref{lem:anclem2} and~\ref{lem:anclem3}
to deduce Lemma~\ref{lem:smallexpanderfactor}, which together with Lemma~\ref{lem:anclem1} implies that
$G$ contains a sparse even factor $H$ which is still a robust expander.
\begin{lem}
\label{lem:anclem1}Suppose $0<\nu\le\tau\le\epsilon<1/2$ and $\epsilon\ge2\nu/\tau$.
Let $G$ be a graph on $n$ vertices with minimum degree $\delta(G)\ge\left(1/2+\epsilon\right)n$.
Then $G$ is a robust $(\nu,\tau)$-expander.
\end{lem}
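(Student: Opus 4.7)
The plan is to fix $S \subseteq V(G)$ with $\tau n \le |S| \le (1-\tau)n$, set $R := RN_{\nu,G}(S)$ and $T := V(G) \setminus R$, and show $|R| \ge |S| + \nu n$ by splitting on whether $|S|$ is large or small. The large case is essentially trivial: if $|S| > (1/2 - \epsilon + \nu)n$, then any $v \in T$ would satisfy $d_{V\setminus S}(v) \ge \delta(G) - \nu n \ge (1/2 + \epsilon - \nu)n$, forcing $|V \setminus S| \ge (1/2+\epsilon-\nu)n$, contrary to assumption. Hence $T = \emptyset$, so $|R| = n$, and since $|S| \le (1-\tau)n \le (1-\nu)n$ we conclude $|R| \ge |S| + \nu n$.

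So assume $|S| \le (1/2 - \epsilon + \nu)n$. The core of the argument is a double count of the ordered pairs $(v,u)$ with $v \in T$, $u \in S$ and $vu \in E(G)$. Counting by $v \in T$ gives $\sum_{v \in T} d_S(v) \le \nu n\, |T|$ by definition of $R$. Counting by $u \in S$ gives $\sum_{u \in S} d_T(u) = \sum_{u \in S}(d(u) - d_R(u)) \ge (1/2+\epsilon)n|S| - |S|\cdot|R|$, using $d_R(u) \le |R|$ and the minimum degree hypothesis. The two sums are equal, and $|T| = n - |R|$, so rearranging yields
\[
|R|\bigl(|S| - \nu n\bigr) \ge (1/2+\epsilon) n |S| - \nu n^2.
\]
The hypotheses $\epsilon \ge 2\nu/\tau$ and $\epsilon < 1/2$ force $\nu < \tau/4$, so $|S| \ge \tau n > \nu n$ and the division below is legal.

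The goal $|R| \ge |S| + \nu n$ is equivalent to the clean quadratic inequality $|S|\bigl((1/2+\epsilon)n - |S|\bigr) \ge \nu(1-\nu)n^2$, and the case assumption gives $(1/2+\epsilon)n - |S| \ge (2\epsilon - \nu)n$, so it suffices to check $\tau(2\epsilon - \nu) \ge \nu(1-\nu)$. From $\epsilon\tau \ge 2\nu$ one gets $\tau(2\epsilon - \nu) \ge 4\nu - \nu\tau \ge 3\nu \ge \nu(1-\nu)$, finishing the proof. I don't anticipate any real obstacles beyond this algebra; the main design choice is the threshold $|S| = (1/2 - \epsilon + \nu)n$, which is exactly where the double-count becomes uninformative (as $(1/2+\epsilon)n - |S|$ drops to order $\nu$), and above which the direct emptiness argument for $T$ takes over.
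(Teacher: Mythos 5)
Your proof is correct. A caveat first: the paper does not actually include a proof of this lemma; it is quoted verbatim from~\cite{Kellyapps} as a known tool, so there is no in-paper argument to compare against. That said, your argument is the standard double-counting proof and is what one would expect the source to contain. Both cases check out: in the large-$|S|$ case, a vertex $v\notin RN_\nu(S)$ has at least $(1/2+\epsilon-\nu)n$ neighbours in $V\setminus S$, forcing $|S|\le(1/2-\epsilon+\nu)n$, a contradiction; and in the small-$|S|$ case, double counting $e'(S,T)$ and rearranging correctly reduces the goal to $|S|\bigl((1/2+\epsilon)n-|S|\bigr)\ge\nu(1-\nu)n^2$, which your chain $\tau(2\epsilon-\nu)\ge 2\epsilon\tau-\nu\ge 4\nu-\nu\ge\nu(1-\nu)$ establishes (noting $\epsilon\tau\ge 2\nu$ and $\tau<1$). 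Division by $|S|-\nu n$ is safe since $\nu<\tau/4$. One minor quibble with the closing remark: at the threshold $|S|=(1/2-\epsilon+\nu)n$ the quantity $(1/2+\epsilon)n-|S|$ equals $(2\epsilon-\nu)n$, which need not be of order $\nu$ (e.g.~$\epsilon$ can be as large as $\sqrt{2\nu}$ or more under the hypotheses); the double count actually remains informative up to $|S|$ within $O(\nu n)$ of $(1/2+\epsilon)n$, so the chosen split is merely convenient rather than tight. This does not affect the validity of the proof.
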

\begin{lem}
\label{lem:anclem2}Suppose $0<1/n\ll\eta\ll\nu,\tau,\alpha<1$. Suppose
that $G$ is a robust $(\nu,\tau)$-expander on $n$ vertices with
$\delta(G)\ge\alpha n$. Then the edges of $G$ can be oriented in
such a way that the resulting oriented graph $G'$ satisfies the following
properties:
\begin{enumerate}
\item $G'$ is a robust $\left(\nu/4,\tau\right)$-outexpander.
\item $d_{G'}^{+}(x),d_{G'}^{-}(x)=(1\pm\eta)d_{G}(x)/2$ for every vertex
$x$ of $G$.
\end{enumerate}
\end{lem}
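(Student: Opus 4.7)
The natural plan is to orient each edge of $G$ independently and uniformly at random: for each $xy\in E(G)$, orient it as $x\to y$ or $y\to x$ with probability $1/2$ each. I would then show that with high probability the resulting oriented graph $G'$ satisfies both (i) and (ii), so in particular such an orientation exists.

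For (ii), fix a vertex $x$. Then $d^+_{G'}(x)$ is a sum of $d_G(x)\ge\alpha n$ independent Bernoulli$(1/2)$ random variables with mean $d_G(x)/2$, so a standard Chernoff bound gives
\[
\Pr\bigl[\,d^+_{G'}(x)\ne (1\pm\eta)d_G(x)/2\,\bigr]\le 2\exp(-c\eta^{2}\alpha n)
\]
for some absolute constant $c>0$, and the same bound applies to $d^-_{G'}(x)=d_G(x)-d^+_{G'}(x)$. Since $1/n\ll\eta,\alpha$, a union bound over the $n$ vertices shows that (ii) holds with probability $1-o(1)$.

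For (i), fix $S\subseteq V(G)$ with $\tau n\le |S|\le (1-\tau)n$. Since $G$ is a robust $(\nu,\tau)$-expander, $|RN_{\nu,G}(S)|\ge |S|+\nu n$. For any fixed $v\in RN_{\nu,G}(S)$, the number of in-neighbours of $v$ in $G'$ that lie in $S$ is a sum of at least $\nu n$ independent Bernoulli$(1/2)$ variables, and hence is at least $\nu n/4$ with probability at least $1-\exp(-c'\nu n)$ for some absolute constant $c'>0$. A union bound over the at most $n$ vertices of $RN_{\nu,G}(S)$ yields $RN_{\nu,G}(S)\subseteq RN^+_{\nu/4,G'}(S)$ (and therefore $|RN^+_{\nu/4,G'}(S)|\ge|S|+\nu n\ge|S|+(\nu/4)n$) with probability at least $1-n\exp(-c'\nu n)$. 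Taking a further union bound over the $2^n$ possible subsets $S$ gives that (i) fails with total probability at most $2^n\cdot n\cdot\exp(-c'\nu n)=o(1)$, since $\nu$ is a positive constant and $1/n\ll\nu$.

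Combining the two estimates, with positive (in fact high) probability both (i) and (ii) hold, producing the required orientation. The only place requiring any care is the $2^n$ union bound in the proof of (i): it is absorbed because the per-subset failure probability is $\exp(-\Theta(\nu n))$, which dominates $2^n$ once $\nu$ is bounded below by a constant. The key structural input is the monotonic passage from robust expansion of $G$ to robust outexpansion of $G'$ via the observation that an $RN_{\nu,G}$-witness, with $\nu n$ neighbours in $S$, retains at least $\nu n/4$ in-neighbours in $S$ after a random orientation.
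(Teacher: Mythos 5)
The paper does not prove this lemma; it quotes it verbatim from \cite{Kellyapps}, so there is no in-paper proof to compare against. That said, your random-orientation strategy is a reasonable guess at the argument, and your treatment of (ii) is fine. The problem is in (i), and it is exactly at the point you flag as ``the only place requiring any care.'' The per-set failure probability is of the form $n\exp(-c'\nu n)$ with $c'$ an absolute Chernoff constant well below $1$, so $2^n\cdot n\cdot\exp(-c'\nu n)=\exp\bigl(n\ln 2+\ln n-c'\nu n\bigr)$ tends to $0$ only if $c'\nu>\ln 2$. Since $\nu<1$ is a small fixed constant in the lemma (indeed $\eta\ll\nu\ll 1$), this fails: the union bound over $2^n$ sets is not absorbed, and your computation does not give $o(1)$.

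Worse, the intermediate claim you union-bound towards, namely that with high probability $RN_{\nu,G}(S)\subseteq RN^+_{\nu/4,G'}(S)$ \emph{simultaneously for every} admissible $S$, is false for \emph{every} orientation, so no probabilistic argument can establish it. Given any orientation $G'$, pick a vertex $v$ with $d^+_{G'}(v)\ge\nu n$ (this holds for essentially every vertex once (ii) holds, since $d_G(v)\ge\alpha n$), and take $S$ to consist of $N^+_{G'}(v)$ together with fewer than $(\nu/4)n$ in-neighbours of $v$, padded by non-neighbours of $v$ to bring $|S|$ into $[\tau n,(1-\tau)n]$. Then $d_{S,G}(v)\ge\nu n$, so $v\in RN_{\nu,G}(S)$, but $v$ has fewer than $(\nu/4)n$ in-neighbours in $S$, so $v\notin RN^+_{\nu/4,G'}(S)$. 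Thus the argument must instead use the $\nu n$ buffer: one needs only $|RN^+_{\nu/4,G'}(S)|\ge|S|+(\nu/4)n$, while $|RN_{\nu,G}(S)|\ge|S|+\nu n$, so it suffices that at most $(3\nu/4)n$ vertices of $RN_{\nu,G}(S)$ are ``lost.'' For a fixed $S$, the event that $\Theta(\nu n)$ vertices are simultaneously lost has probability of order $\exp(-\Theta(\nu^2 n^2))$ (after dealing with the mild dependence coming from edges inside $S$, e.g.\ by working with pairwise disjoint subfamilies of the relevant edge sets), and this doubly-exponential bound does beat $2^n$. That extra ingredient is what is missing from your proof.
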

An \emph{$r$-factor} of a digraph $G$ is a spanning subdigraph of $G$ in which every vertex has in- and outdegree~$r$.
\begin{lem}
\label{lem:anclem3}Suppose $0<1/n\ll\nu'\ll\xi\ll\nu\le\tau\ll\alpha<1$.
Let $G$ be a robust $(\nu,\tau)$-outexpander on $n$ vertices with
$\delta^{+}(G),\delta^{-}(G)\ge\alpha n$. Then $G$ contains a $\xi n$-factor
which is still a robust $(\nu',\tau)$-outexpander.
\end{lem}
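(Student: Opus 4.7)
The plan is to combine a random sparsification with a short patch-up step. First I would construct a random spanning subdigraph $H_0$ of $G$ with exact out-degrees $\xi n$ and in-degrees $\xi n (1 \pm o(1))$ that is easily seen to be a robust $(\nu',\tau)$-outexpander; then I would modify $H_0$ into a true $\xi n$-factor by a small flow/Hall argument. Robust expansion is preserved because the modification affects only $o(\nu' n)$ arcs at any single vertex, and $\nu' \ll \xi$ provides plenty of slack.

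For the sparsification step, independently at each vertex $v$ sample a uniformly random subset $N_0^+(v) \subseteq N_G^+(v)$ of size exactly $\xi n$, and let $H_0$ be the spanning digraph with arc set $\{vw : w \in N_0^+(v)\}$. By construction $d^+_{H_0}(v) = \xi n$ for every $v$. The in-degree $d^-_{H_0}(w)$ is a sum of independent indicators with mean $\xi n \sum_{v \in N_G^-(w)} 1/d_G^+(v) = \xi n (1 \pm o(1))$, since $\delta^\pm(G) \ge \alpha n$ controls the summand. A Chernoff plus union bound gives that with high probability every in-degree lies in $\xi n (1 \pm n^{-1/4})$. For expansion, fix $S \subset V(G)$ with $\tau n \le |S| \le (1-\tau) n$ and set $T := RN^+_{\nu, G}(S)$, so $|T| \ge |S| + \nu n$ and each $v \in T$ has at least $\nu n$ in-neighbours in $S$ in $G$. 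The expected number of those in-neighbours which survive into $H_0$ is at least $\nu n \cdot \xi n / \alpha n = (\nu\xi/\alpha)\,n \gg \nu' n$, and a further Chernoff plus union bound over the $\le 4^n$ pairs $(S,T)$ shows that with high probability every $v \in T$ retains at least $\nu' n$ in-neighbours in $S$. Consequently $|RN^+_{\nu', H_0}(S)| \ge |T| \ge |S| + \nu' n$, so $H_0$ is a robust $(\nu',\tau)$-outexpander.

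The heart of the argument, and where I expect the main difficulty, is the patch-up: we must modify $H_0$ into an exact $\xi n$-factor without spoiling the outexpansion. Let $X^+$ (resp.\ $X^-$) be the set of vertices with in-degree greater than (resp.\ less than) $\xi n$ in $H_0$; by the Chernoff bound above the total surplus and total deficit are each at most $\xi n^{7/4}$, and they are equal by handshake. Encoding the in-side imbalance in the bipartite auxiliary graph $B$ with parts $V^+ \cup V^-$ and edges corresponding to arcs of $G$ (so that a $\xi n$-factor of $G$ is a $\xi n$-regular spanning subgraph of $B$), I would find a sequence of swap operations each of the form ``replace an edge $v^+ w^-$ of $H_0$ with an edge $v^+ w'^-$ of $B - H_0$,'' where $w \in X^+$ and $w' \in X^-$. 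Such swaps preserve every $V^+$-degree automatically and push surplus from $X^+$ to deficit at $X^-$. Feasibility of the required collection of swaps is a routine Hall/max-flow verification using that the residual digraph $G - H_0$ still has $\delta^\pm \ge (\alpha - \xi)n$, so is nearly complete between any two linear-sized sets, and hence the bipartite auxiliary graph of candidate swaps has a suitable factor. Since the number of modifications at any single vertex is at most $\xi n^{3/4} \ll \nu' n$, each robust neighbourhood $RN^+_{\nu', H_0}(S)$ changes by at most $\nu' n/2$ under the patch, and the resulting $\xi n$-factor $H$ remains a robust $(\nu',\tau)$-outexpander. The delicate point is balancing the in- and out-degrees simultaneously, which is exactly what forces the swap formulation rather than independent random adjustments.
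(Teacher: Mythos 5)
The paper itself does not prove this lemma: it is one of three results quoted verbatim from \cite{Kellyapps}, so there is no in-paper argument to compare against. I will therefore assess your proposal on its own terms.

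The sparsification step has a genuine error, and it is fatal to the patch-up as you have set it up. You claim that when each vertex $v$ independently selects a uniformly random $\xi n$-subset of $N^+_G(v)$, the in-degree $d^-_{H_0}(w)$ concentrates around $\xi n$. But its mean is
$$\mathbb{E}\bigl[d^-_{H_0}(w)\bigr]=\xi n\sum_{v\in N^-_G(w)}\frac{1}{d^+_G(v)},$$
and the sum on the right is only forced to lie in the interval $(\alpha,1/\alpha)$: there are between $\alpha n$ and $n$ terms, each between $1/n$ and $1/(\alpha n)$. It is equal to $1$ only when $G$ is essentially regular, which the hypotheses do not provide. So the typical in-degree of a vertex $w$ can lie anywhere in $[\,\alpha\xi n,\ \xi n/\alpha\,]$, and different vertices $w$ will have means that genuinely differ by $\Theta(\xi n)$ from $\xi n$ and from each other. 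Chernoff concentrates each $d^-_{H_0}(w)$ around its own mean, not around $\xi n$.

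As a consequence, the total surplus and deficit are of order $\xi n^2$ rather than $\xi n^{7/4}$, and (more importantly) the surplus or deficit at a \emph{single} vertex can be $\Theta(\xi n)$. Your patch-up would then force $\Theta(\xi n)$ in-edge modifications at that vertex. Since the hierarchy gives $\nu'\ll\xi$, we have $\xi n\gg\nu'n$, so these modifications can wipe out a $\nu'$-robust in-neighbourhood in $S$ entirely; the claim that ``each robust neighbourhood changes by at most $\nu'n/2$'' does not survive once the $\xi n^{3/4}$ bound is replaced by the correct $\Theta(\xi n)$. Worse, the deficit vertices also force \emph{deletions} of arcs that carry in-edges from $S$, and you cannot choose which arcs to delete consistently for all $S$ simultaneously.

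A workable variant of your plan keeps the random sparsification but arranges for all degrees of $H_0$ to be \emph{at most} $\xi n$, so that the patch-up only ever \emph{adds} arcs, and adding arcs can never destroy robust outexpansion. For example, have each $v$ choose $\lfloor\alpha\xi n\rfloor$ random out-neighbours, or include each arc independently with probability $p$ for a suitable $p<\xi$; then $d^\pm_{H_0}(v)\le\xi n$ for every $v$ w.h.p., and $H_0$ is a robust $(\nu',\tau)$-outexpander by the same Chernoff plus union-bound computation you already sketched (with $\nu'\ll\alpha\xi\nu$). The remaining step is to find $H_1\subseteq G-H_0$ with $d^+_{H_1}(v)=\xi n-d^+_{H_0}(v)$ and $d^-_{H_1}(v)=\xi n-d^-_{H_0}(v)$ for all $v$; this is a bipartite degree-prescribed subgraph problem, and its feasibility is exactly the max-flow/Gale--Ryser verification you gesture at, carried out in $G-H_0$ using $\delta^\pm(G-H_0)\ge(\alpha-\xi)n$ and the fact that every demanded degree is at most $\xi n\ll\alpha n$. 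This keeps your overall two-phase structure, but it is not a cosmetic change: the ``surplus vs.\ deficit swap'' formulation cannot be repaired, because in a non-regular host digraph the in-degree discrepancies from your sampling scheme are of order $\xi n$, not $o(\nu'n)$.
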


\begin{lem}
\label{lem:smallexpanderfactor}Suppose $0<1/n\ll\nu'\ll\epsilon\ll\nu\ll\tau\ll\alpha<1$,
and suppose in addition that $\epsilon n$ is an even integer. If
$G$ is a robust $(\nu,\tau)$-expander on $n$ vertices with $\delta(G)\ge\alpha n$,
then there exists an $\epsilon n$-factor $H$ of $G$ which is a
robust $(\nu',\tau)$-expander.\end{lem}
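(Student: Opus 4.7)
The plan is to reduce to the directed analogue via the orientation result Lemma~\ref{lem:anclem2}, apply the existing digraph factor result Lemma~\ref{lem:anclem3} to extract a sparse robustly expanding factor in the oriented setting, and then forget orientations to obtain the desired undirected factor.

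More precisely, I would first introduce an auxiliary constant $\eta$ with $\nu'\ll\eta\ll\epsilon$ and apply Lemma~\ref{lem:anclem2} to $G$, with $\alpha$ playing the role of both $\nu$ (as the expansion parameter of $G$) and the minimum-degree parameter. This yields an orientation $G'$ of $G$ which is a robust $(\nu/4,\tau)$-outexpander and in which $d^+_{G'}(x),d^-_{G'}(x)=(1\pm\eta)d_G(x)/2\ge \alpha n/3$ for every vertex $x$. Next, I would apply Lemma~\ref{lem:anclem3} to $G'$ with the outexpansion parameter $\nu/4$, the size parameter $\xi:=\epsilon/2$, the target outexpansion $\nu'$, and with $\alpha/3$ playing the role of the minimum semi-degree parameter. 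The hypothesis $\nu'\ll\epsilon\ll\nu\ll\tau\ll\alpha$ ensures $\nu'\ll\xi\ll\nu/4\le\tau\ll\alpha/3$, so Lemma~\ref{lem:anclem3} produces an $(\epsilon n/2)$-factor $D$ of $G'$ which is a robust $(\nu',\tau)$-outexpander. (The integrality of $\epsilon n/2$ is guaranteed by the hypothesis that $\epsilon n$ is an even integer.)

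Finally, I would let $H$ be the undirected graph obtained from $D$ by forgetting the orientation of each arc. Since $G'$ is an orientation of $G$, no arc is doubled, and every vertex of $H$ has degree $d^+_D(x)+d^-_D(x)=\epsilon n$, so $H$ is an $\epsilon n$-factor of $G$. To verify the robust expansion of $H$, take any $S\subset V(H)$ with $\tau n\le|S|\le(1-\tau)n$. Every vertex $v\in RN^+_{\nu',D}(S)$ has at least $\nu' n$ in-neighbours in $S$ in $D$, hence at least $\nu' n$ neighbours in $S$ in $H$, so $RN^+_{\nu',D}(S)\subseteq RN_{\nu',H}(S)$. The robust $(\nu',\tau)$-outexpansion of $D$ then gives
\[
|RN_{\nu',H}(S)|\ge|RN^+_{\nu',D}(S)|\ge|S|+\nu' n,
\]
as required.

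The argument is essentially an assembly of the three quoted lemmas, and the only real thing to check is that the parameter hierarchy lines up correctly at each invocation (and that the parity condition on $\epsilon n$ makes the intermediate factor size $\epsilon n/2$ an integer). I do not expect any genuine obstacle beyond that bookkeeping, since the key non-trivial inputs — producing the oriented robust outexpander with balanced semi-degrees, and sparsifying it while preserving robust outexpansion — are exactly what Lemmas~\ref{lem:anclem2} and~\ref{lem:anclem3} are designed to do.
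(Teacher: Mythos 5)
Your proposal is correct and is essentially the paper's own proof: orient $G$ via Lemma~\ref{lem:anclem2} to get a robust $(\nu/4,\tau)$-outexpander with semi-degrees at least $\alpha n/3$, apply Lemma~\ref{lem:anclem3} to extract an $(\epsilon n/2)$-factor that remains a robust $(\nu',\tau)$-outexpander, then forget orientations. The only blemish is the garbled parenthetical claiming ``$\alpha$ plays the role of $\nu$'' when invoking Lemma~\ref{lem:anclem2} --- clearly $\nu$ is the expansion parameter and $\alpha$ the degree parameter, and your conclusion ($(\nu/4,\tau)$-outexpansion) is consistent with the correct substitution --- so this reads as a typo rather than an error.
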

\begin{proof}
We apply Lemma~\ref{lem:anclem2} to orient the edges of $G$, forming
an oriented graph $G'$ which is a robust $\left(\nu/4,\tau\right)$-outexpander
and which satisfies $\delta^{+}(G'),\delta^{-}(G')\ge\alpha n/3$.
We then apply Lemma~\ref{lem:anclem3} to find an $\epsilon n/2$-factor
$H$ of $G'$ which is a robust $(\nu',\tau)$-outexpander. Now remove the orientation on the edges of $H$ to
obtain a robust $(\nu',\tau)$-expander which is an $\epsilon n$-factor
of $G$, as desired.
\end{proof}
We will now show that even after removing a sparse factor $H$, our given graph $G$
still contains an even factor of degree at least $\textnormal{reg}_{\textnormal{even}}(n,\delta)$.
To do this, we first show that $G-H$ is still non-extremal.
\begin{lem}
\label{lem:removefactornotextremal}Suppose $0<1/n\ll\epsilon\ll\eta\ll1/2-\alpha$,
and that $-\epsilon\le\alpha<1/2$. Let $G$ be a graph
on $n$ vertices with $\delta(G)=\left(1/2+\alpha\right)n$ which is
not $2\eta$-extremal. Suppose $H$ is an $\epsilon n$-factor of
$G$. Then $G-H$ is not $\eta$-extremal.
\end{lem}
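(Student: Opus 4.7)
The natural approach is to prove the contrapositive: assuming $G' := G - H$ is $\eta$-extremal, I will show that $G$ is $2\eta$-extremal, which contradicts the hypothesis. Let $\alpha' := \alpha - \epsilon$, so that $\delta(G') = (1/2 + \alpha')n$, and let $A, B \subset V(G')$ be the sets witnessing that $G'$ is $\eta$-extremal. I claim the same sets $A, B$ witness that $G$ is $2\eta$-extremal. The heart of the verification is that passage from $G$ to $G'$ perturbs the relevant quantities by amounts controlled by $\epsilon$, and the hierarchy $\epsilon \ll \eta$ absorbs these perturbations into the slack gained when passing from slack $\eta$ to slack $2\eta$.

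For the size conditions (E1) and (E2), the issue is that the ``target sizes'' of $A$ and $B$ depend on $\alpha_+$ for $G$ but on $\alpha'_+$ for $G'$. Since the function $x \mapsto \sqrt{x/2}$ is monotone and has the property that $|\sqrt{\alpha_+/2} - \sqrt{\alpha'_+/2}| \le \sqrt{\epsilon/2}$ in all cases (splitting into $\alpha \ge \epsilon$, $0 \le \alpha < \epsilon$, and $\alpha < 0$, using a mean value argument in the first case and direct bounds in the others), the hypothesis $\epsilon \ll \eta$ yields $|\sqrt{\alpha_+/2} - \sqrt{\alpha'_+/2}| \le \eta$. Hence the $\eta$-approximate sizes of $A$ and $B$ from the definition of ``$G'$ is $\eta$-extremal'' become $2\eta$-approximate sizes with respect to $\alpha$, giving (E1) and (E2) for $G$ with constant $2\eta$.

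Condition (E3) is immediate: removing $H$ only destroys edges, so $e_G(A,B) \ge e_{G'}(A,B) > (1 - \eta)|A||B| > (1 - 2\eta)|A||B|$. For (E4), the bookkeeping goes the other direction — adding back $H$ can only increase edges inside $B$ — but since $H$ is an $\epsilon n$-factor, each vertex contributes at most $\epsilon n$ additional edges, so
\[
e_G(B) \le e_{G'}(B) + e_H(B) \le e_{G'}(B) + \epsilon n |B|/2 < \bigl(\alpha'_+ + \sqrt{\alpha'_+/2} + \eta + \epsilon\bigr) n|B|/2.
\]
Using $\alpha'_+ \le \alpha_+$ and monotonicity of $\sqrt{\cdot/2}$, together with $\epsilon \ll \eta$ to absorb $\eta + \epsilon \le 2\eta$, we obtain $e_G(B) < (\alpha_+ + \sqrt{\alpha_+/2} + 2\eta) n|B|/2$, which is (E4) for $G$ with constant $2\eta$.

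There is no real obstacle here — the proof is essentially a slack-tracking exercise. The only mildly delicate step is the estimate on $|\sqrt{\alpha_+/2} - \sqrt{\alpha'_+/2}|$ when $\alpha$ is close to $0$, where $\sqrt{\cdot}$ has unbounded derivative; but since both quantities are bounded above by $\sqrt{\epsilon/2}$ in that regime, a direct bound (rather than a derivative estimate) suffices and still gives an error of order $\sqrt{\epsilon}$, which is $\ll \eta$ by the hierarchy.
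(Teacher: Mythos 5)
Your proof is correct and takes essentially the same route as the paper's, which argues directly rather than by contrapositive but relies on exactly the same bookkeeping: comparing $e(A,B)$ and $e(B)$ in $G$ and $G-H$ and absorbing the $O(\epsilon)$ perturbations into the gap between $\eta$ and $2\eta$. You are somewhat more careful than the paper about the fact that the target sizes in (E1)--(E2) shift by $|\sqrt{\alpha_+/2}-\sqrt{(\alpha-\epsilon)_+/2}|\le\sqrt{\epsilon/2}$ when passing from $G$ to $G-H$ — a step the paper leaves implicit — so your write-up is, if anything, slightly more complete.
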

\begin{proof}
Suppose $A,B\subset V(G)$ are disjoint with $|A|$ and $|B|$ satisfying
(E1) and (E2) of Definition~\ref{def:extremal}. Let $G':=G-H$. Since $G$ is not $2\eta$-extremal,
we must have either $e_{G}(A,B)\le(1-2\eta)|A||B|$ or $e_{G}(B)\ge(\alpha_++\sqrt{\alpha_+/2}+2\eta)n|B|/2$.
In the former case we have\[
e_{G'}(A,B)\le e_{G}(A,B)<(1-\eta)|A||B|,\]
and in the latter case we have\begin{eqnarray*}
e_{G'}(B) & \ge & e_{G}(B)-\epsilon n|B|\ge\frac{1}{2}\left(\alpha_++\sqrt{\frac{\alpha_+}{2}}+2\eta-2\epsilon\right)n|B|\\
 & \ge & \frac{1}{2}\left((\alpha-\epsilon)_{+}+\sqrt{\frac{(\alpha-\epsilon)_{+}}{2}}+\frac{3\eta}{2}\right)n|B|.\end{eqnarray*}
Since $\delta(G-H)=\left(1/2+\alpha-\epsilon\right)n$, it follows
that $G-H$ is not $\eta$-extremal.
\end{proof}
We now show that $G-H$ contains a large even factor. We will do this using
the well-known result of Tutte \cite{Tutte}, given below. 
\begin{thm}
\label{thm:Tutte}Let $G$ be a graph. Given disjoint $S,T\subset V(G)$
and $r\in\mathbb{N}$, let $Q_{r}(S,T)$ be the number of connected
components $C$ of $G-(S\cup T)$ such that $r|C|+e(C,T)$ is odd,
and let\begin{equation}
R_{r}(S,T):=\sum_{v\in T}d(v)-e(S,T)+r(|S|-|T|).\label{eq:rdefn}\end{equation}
Then $G$ contains an $r$-factor if and only if $Q_{r}(S,T)\le R_{r}(S,T)$
for all disjoint $S,T\subset V(G)$.
\end{thm}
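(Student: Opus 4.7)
\medskip

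\noindent\textbf{Proof plan for Theorem~\ref{thm:Tutte}.} This is a classical result; I would prove both directions separately. The easy direction is necessity. Assume $G$ has an $r$-factor $F$, and fix disjoint $S,T\subset V(G)$. First I would observe that for every component $C$ of $G-(S\cup T)$, summing $d_F(v)$ over $v\in C$ yields $r|C|=2e_F(C)+e_F(C,T)+e_F(C,S)$, so $e_F(C,S)\equiv r|C|+e_F(C,T)\pmod 2$. For the $Q_r(S,T)$ components $C$ with $r|C|+e(C,T)$ odd, the quantity $e_F(C,T)$ has the same parity as $e(C,T)$ modified by the missing edges, and a short parity chase forces at least one $F$-edge between $C$ and $S$, say with total contribution $q\ge Q_r(S,T)$ such edges. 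I would then compare $r|T|=\sum_{v\in T}d_F(v)=e_F(S,T)+2e_F(T)+e_F(T,V\setminus(S\cup T))$ against $r|S|=e_F(S,T)+2e_F(S)+e_F(S,V\setminus(S\cup T))\ge e_F(S,T)+q$, and combine the two identities with the bound $e_F(S,T)\le e(S,T)$ to extract $Q_r(S,T)\le R_r(S,T)$.

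For sufficiency I would use Tutte's standard reduction from $r$-factors to perfect matchings. Construct an auxiliary graph $H=H(G,r)$ as follows: replace each vertex $v\in V(G)$ of degree $d_v$ by a gadget consisting of a set $X_v$ of $d_v$ \emph{outer} vertices (one per edge of $G$ at $v$) and a set $Y_v$ of $d_v-r$ \emph{inner} vertices, together with all edges between $X_v$ and $Y_v$. Each edge $uv\in E(G)$ becomes an edge in $H$ joining the outer vertex of $X_u$ corresponding to $uv$ to the outer vertex of $X_v$ corresponding to $uv$. A routine check shows that perfect matchings of $H$ correspond bijectively to $r$-factors of $G$: the outer vertices in each $X_v$ matched across to $X_u$ gadgets select exactly the $r$ edges at $v$ that belong to the $r$-factor. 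I would then apply Tutte's 1-factor theorem to $H$: if $H$ has no perfect matching, there is a set $U\subset V(H)$ whose removal leaves more than $|U|$ odd components.

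To finish, I would translate the Tutte-bad set $U$ in $H$ back to a pair $(S,T)$ in $G$. The natural choice is to put $v\in S$ iff $Y_v\subseteq U$ and all of $X_v\cap U$ is ``used up'' by the $Y_v$ side, and to put $v\in T$ iff $Y_v\cap U=\emptyset$ but $X_v$ is partially hit; vertices of $G$ whose gadgets are untouched correspond to components of $G-(S\cup T)$. One then verifies that the odd components of $H-U$ consist of (a) the small parity-offending pieces inside each gadget with $v\in T$, contributing exactly $\sum_{v\in T}d(v)-e(S,T)+r|S|$ after bookkeeping, and (b) one odd component per component $C$ of $G-(S\cup T)$ with $r|C|+e(C,T)$ odd, contributing $Q_r(S,T)$. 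Comparing with $|U|=\sum_{v\in T}d(v)+r|T|$ and rearranging yields $Q_r(S,T)>R_r(S,T)$ exactly when $|U|$ is deficient, proving the contrapositive.

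The routine necessity direction is straightforward modulo the parity accounting. The main obstacle is the sufficiency direction, specifically the bookkeeping that translates a Tutte-bad set $U\subset V(H)$ into the correct pair $(S,T)\subset V(G)$ and matches $Q_r(S,T)$ and $R_r(S,T)$ to the odd-component count and $|U|$ respectively; getting all the gadget contributions to cancel correctly (and handling the cases where $U$ intersects $X_v$ and $Y_v$ in mixed ways) is the delicate step, and is where the definition of $R_r(S,T)$ in~(\ref{eq:rdefn}) comes from.
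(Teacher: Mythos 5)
The paper does not prove Theorem~\ref{thm:Tutte}; it is quoted as Tutte's classical $r$-factor theorem~\cite{Tutte} and used as a black box in the proof of Lemma~\ref{lem:largefactor}. Your plan for sufficiency is the standard gadget reduction to the 1-factor theorem: the bipartite gadgets with $|X_v|=d(v)$ outer and $|Y_v|=d(v)-r$ inner vertices give a graph $H$ whose perfect matchings biject with $r$-factors of $G$, and a Tutte-bad set $U\subset V(H)$ must then be translated into a pair $(S,T)$ violating $Q_r\le R_r$. You are right that the translation is the delicate step; your description of which $v$ go into $S$, $T$, or the components is in the right spirit, but one also needs to first normalise $U$ (e.g.\ so that for each $v$ either $Y_v\subseteq U$ or $Y_v\cap U=\emptyset$, and no $X_v$-vertex of $U$ is wasted) before the odd-component count and $|U|$ will line up with $Q_r$ and $R_r$.

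There is a small but genuine error in your necessity direction. The parity identity $e_F(C,S)\equiv r|C|+e_F(C,T)\pmod 2$ does \emph{not} force an $F$-edge from $C$ to $S$ when $r|C|+e(C,T)$ is odd, because $e_F(C,T)$ and $e(C,T)$ need not have the same parity. What it forces is that $e_F(C,S)+\bigl(e(C,T)-e_F(C,T)\bigr)$ is odd, so the offending component may instead contribute a non-$F$ edge of $G$ between $C$ and $T$. The correct count is therefore $e_F(U,S)+\bigl(e(U,T)-e_F(U,T)\bigr)\ge Q_r(S,T)$ where $U:=V(G)\setminus(S\cup T)$; combining this with the degree identities $r|S|=2e_F(S)+e_F(S,T)+e_F(S,U)$ and $r|T|=2e_F(T)+e_F(S,T)+e_F(T,U)$, and finally $e_F(T)\le e(T)$, recovers $Q_r\le R_r$. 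As written, your bound $r|S|\ge e_F(S,T)+q$ with $q$ counting only $F$-edges from $U$ to $S$ would fail on a graph where the parity of some odd component is absorbed entirely by missing $C$--$T$ edges.
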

In proving the following lemma, we follow a similar approach to that
used in \cite{CKO}. We will also make frequent and implicit use of
the inequality $\sqrt{x}\le\sqrt{x+h}\le\sqrt{x}+\sqrt{h}$ for $x,h\ge 0$.
\begin{lem}
\label{lem:largefactor}Suppose $0<1/n\ll\epsilon\ll\eta\ll1/2-\alpha$
and that $-\epsilon\le\alpha<1/2$. Let $G$ be a graph on $n$ vertices
with minimum degree $\delta:=\delta(G)=\left(1/2+\alpha\right)n$, and suppose
that $G$ is not $\eta$-extremal. Let\[
r:=\frac{n}{4}+\frac{(\alpha+\epsilon)n}{2}+\sqrt{\frac{\alpha+\epsilon}{2}}n,\]
and suppose that $r$ is an even integer. Then $G$ contains an $r$-factor.\end{lem}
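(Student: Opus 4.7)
The plan is to apply Tutte's factor theorem (Theorem~\ref{thm:Tutte}) in contrapositive. Suppose for contradiction that $G$ has no $r$-factor; then there exist disjoint $S, T \subset V(G)$ with $Q_r(S,T) > R_r(S,T)$. Let $a := |S|$, $b := |T|$, $U := V(G) \setminus (S \cup T)$ and $c := |U| = n - a - b$. I will show that these witnesses force $G$ to be $\eta$-extremal with $A := S$ and $B := T$, contradicting the hypothesis.

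Using $\sum_{v \in T} d(v) = 2e(T) + e(S, T) + e(T, U)$ together with the crude bound $Q_r(S,T) \le c$, the failure of Tutte's condition rearranges to
\[
2e(T) + e(T, U) + r(a - b) < c.
\]
Since $r$ is of order $n$, this immediately gives small upper bounds on $e(T)$ and $e(T, U)$ and forces $a - b < c/r$. Combining with $\sum_{v \in T} d(v) \ge \delta b$ yields the lower bound
\[
e(S, T) > \delta b - c - r(b - a),
\]
and coupled with the trivial bound $e(S, T) \le ab$, this produces the scalar constraint
\[
ab + c + r(b - a) > \delta b.
\]

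Set $\beta := \sqrt{(\alpha_{+} + \epsilon)/2}$, so that $r = (1/2+\beta)^2 n + O(\epsilon n)$ and $\delta - r = (1/2-\beta)^2 n - O(\epsilon n)$. A direct computation (expanding both sides) shows that the scalar constraint $ab + c + r(b-a) > \delta b$ is, modulo lower order terms, tight at the extremal configuration $a = (1/2-\beta)n$, $b = (1/2+\beta)n$, $c = 0$; indeed at these values one verifies that $ab + r(b-a) - \delta b$ collapses to $0$ when $r = (1/2+\beta)^2 n$ exactly. A perturbation analysis around this point, using the $\epsilon n$ slack in $r$ and the estimate $\sqrt{x+h} \le \sqrt{x} + \sqrt{h}$ highlighted by the authors, should pin down $a = (1/2 - \sqrt{\alpha_{+}/2} \pm \eta)n$ and $b = (1/2 + \sqrt{\alpha_{+}/2} \pm \eta)n$, verifying (E1) and (E2). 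The lower bound $e(S,T) \ge \delta b - c - r(b-a)$, evaluated at the pinned-down sizes, becomes $e(S,T) \ge (1-\eta)ab$, which is (E3). And the inequality $2e(T) < c + r(b-a)$, combined with the value $r(b-a) \approx (\alpha_{+} + \sqrt{\alpha_{+}/2})nb$, gives $e(T) < (\alpha_{+} + \sqrt{\alpha_{+}/2} + \eta)nb/2$, which is (E4). Setting $A := S$, $B := T$ therefore makes $G$ an $\eta$-extremal graph, contradicting the hypothesis.

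The main obstacle, in my view, is the delicate perturbation analysis: since $r$ is chosen only just above $\textnormal{reg}_{\textnormal{even}}(n,\delta)$, the scalar constraint from Tutte's condition is essentially tight at the extremal configuration, and one must carefully convert the small $\epsilon n$ slack in $r$ into an $\eta n$ slack in $a$ and $b$, not just $O(\sqrt{\epsilon})n$. A secondary subtlety is the regime where $c$ is large, where the bound $Q_r(S,T) \le c$ is vastly overestimating the truth; there, I would use the high minimum degree to argue that $G[U]$ is connected (each $v \in U$ has at least $\delta - a - b$ neighbours in $U$, which is positive whenever $c$ is reasonably large), so $Q_r(S,T) \le 1$, yielding a much tighter constraint that quickly rules the regime out. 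Degenerate cases such as $a = 0$ or $b = 0$ will likely require ad hoc treatment, since then the partition $A := S$, $B := T$ cannot directly satisfy (E1) or (E2) and one must augment $S$ or $T$ with a suitable subset of $U$.
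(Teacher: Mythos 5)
Your high-level strategy — apply Tutte's theorem in contrapositive and use the non-extremality hypothesis to dispatch the critical configuration — does match the core idea of the paper's proof (its Case~6, where $|T|$ and $|S|$ lie in a narrow window around $(n+2r-\delta)/2$ and $(n-2r+\delta)/2$). But there is a genuine gap earlier: the crude bound $Q_r(S,T)\le c$ is far too weak when $|S|+|T|$ is small, and the inequality you derive from it, $2e(T)+e(T,U)+r(a-b)<c$, is genuinely satisfiable without forcing extremality. For instance it holds trivially when $a=b=0$, and when $a=3$, $b=0$ it reads $3r<n-3$, which is true since $r\approx n/4$. In this whole regime $S,T$ cannot possibly yield $\eta$-extremal witnesses, so what you flag as "degenerate cases requiring ad hoc treatment" is in fact a missing argument covering a substantial range of $(|S|,|T|)$, not just the endpoints.

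Your proposed repair — argue that $G[U]$ is connected, giving $Q_r\le1$ — does not work as stated. Having each $v\in U$ with at least $\delta-a-b>0$ neighbours inside $U$ only shows that every component of $G-(S\cup T)$ has at least $\delta-a-b+1$ vertices, not that there is a single component; connectivity would require $\delta-a-b$ to exceed roughly $c/2$, i.e.\ $a+b\lesssim 2\alpha n$, which is a vanishing window when $\alpha$ is near $0$ and is empty when $\alpha$ is negative (a case the lemma explicitly permits, since $-\epsilon\le\alpha$). What the paper uses instead is precisely the component-size lower bound: $Q_r(S,T)\le (n-|S|-|T|)/(\delta-|S|-|T|+1)$ whenever $\delta\ge|S|+|T|$. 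Combined with the observation that $R_r(S,T)\ge|S|+|T|$ (which follows from $\delta-r\ge\epsilon n$ and $|T|\le r-1$), this disposes of the small-$(|S|+|T|)$ regime directly (paper's Cases~1--2). The intermediate regimes — where exactly one of $|T|\ge r$, $|S|\ge\delta-r$ holds, or both hold but $|T|$ is away from the critical value — are handled by explicit quadratic discriminant computations (Cases~3--5), and non-extremality enters only in the final near-critical Case~6. By contrast, your worry about converting the $\epsilon n$ slack in $r$ into an $\eta n$ slack in $a,b$ is not a real obstacle: since $\epsilon\ll\eta$ in the hierarchy, the $O(\sqrt{\epsilon})n$ window around the critical sizes sits comfortably inside the $\eta n$ tolerance of the extremality definition.
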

\begin{proof}
Let $S,T$ be two arbitrary disjoint subsets of $V(G)$. We will show
that $Q_{r}(S,T)\le R_{r}(S,T)$, from which the result follows by
Theorem~\ref{thm:Tutte}. We first note a useful bound on $Q_{r}(S,T)$.
If $\delta\ge |S|+|T|$ then every vertex outside $S\cup T$ has at least $\delta-|S|-|T|$ neighbours
outside $S\cup T$, so every component of $G-(S\cup T)$ contains
at least $\delta-|S|-|T|+1$ vertices. Thus
\begin{equation}\label{eq:qbound}
Q_{r}(S,T)\le\frac{n-|S|-|T|}{\delta-|S|-|T|+1} \ \ \ \ \text{if } \delta\ge |S|+|T|.
\end{equation}
Also, note that we always have
\begin{equation}
\delta-r=\left(\frac{1}{4}+\frac{\alpha-\epsilon}{2}-\sqrt{\frac{\alpha+\epsilon}{2}}\right)n=
\left(\frac{1}{4}+\frac{\alpha+\epsilon}{2}-\sqrt{\frac{\alpha+\epsilon}{2}}-\epsilon\right)n\ge\epsilon n,\label{eq:d-r+ve}
\end{equation}
since $1/4+x-\sqrt{x}=(1/2-\sqrt{x})^{2}>0$ for all $0\le x<1/4$
and since $\epsilon\ll1/2-\alpha$. We will now split the proof into cases
depending on $|S|$ and $|T|$. 

\medskip

\noindent\textbf{Case 1:} $|T|\le r-1$, $|S|\le\delta-r$, and $|S|+|T|\ge3$.

We have\begin{eqnarray}
R_{r}(S,T) & \stackrel{(\ref{eq:rdefn})}{=} & \sum_{v\in T}(d(v)-r)+\sum_{v\in S}(r-d_{T}(v))\ge |T|(\delta-r)+\sum_{v\in S}1\nonumber \\
& \stackrel{(\ref{eq:d-r+ve})}{\ge} & |S|+|T|.\label{eq:rbound}\end{eqnarray}
Let $k:=|S|+|T|$. By (\ref{eq:qbound}) and~(\ref{eq:rbound}) it suffices to show that $k\ge(n-k)/(\delta-k+1)$.
This is equivalent to showing that\[
\delta k-k^{2}+2k-n=(k-2)(\delta-k)+2\delta-n\ge0.\]
We have $3\le k\le\delta-1$ and the function $(k-2)(\delta-k)$ is
concave, so it must be minimised in this range when $k=3$ or when
$k=\delta-1$. In either case, we have\[
(k-2)(\delta-k)+2\delta-n=\delta-3+2\delta-n\ge\delta-3-2\epsilon n\ge0\]
as desired.

\medskip

\noindent\textbf{Case 2: }$0\le|S|+|T|\le2$.

If $S=T=\emptyset$, then we have $Q_{r}(S,T)=R_{r}(S,T)=0$ (since
$r$ is even). So suppose that $|S|+|T|>0$. Then it follows from (\ref{eq:qbound})
that\[
Q_{r}(S,T)<\frac{n}{\delta-1}\le\frac{3n}{n}=3.\]
If $T\ne\emptyset$, we have\[
R_{r}(S,T)\stackrel{(\ref{eq:rdefn})}{\ge}\delta|T|-1-r|T|\stackrel{(\ref{eq:d-r+ve})}{\ge}3.\]
If $T=\emptyset$, we have $|S|\ge1$ and so by (\ref{eq:rdefn})
we have $R_{r}(S,T)\ge r\ge3$. We therefore have $Q_{r}(S,T)\le R_{r}(S,T)$
in all cases.

\medskip

\noindent\textbf{Case 3:} $|T|\ge r$ or $|S|\ge\delta-r$, but not
both. 

We have\begin{eqnarray}
R_{r}(S,T) & \stackrel{(\ref{eq:rdefn})}{\ge} & (\delta-r)|T|-|S||T|+r|S|\nonumber \\
 & = & (|T|-r)(\delta-r-|S|)+r(\delta-r)\label{eq:**}\\
 & \ge & r(\delta-r)\stackrel{(\ref{eq:d-r+ve})}{\ge}\frac{\epsilon}{4}n^{2}.\nonumber \end{eqnarray}
(Note that (\ref{eq:**}) holds regardless of the values of $|S|$
and $|T|$.) Moreover, we have $Q_{r}(S,T)\le n$. Hence $Q_{r}(S,T)\le R_{r}(S,T)$
as desired.

\medskip

\noindent\textbf{Case 4:} $|T|\ge r$, $|S|\ge\delta-r$, and
$|T|\ne(n+2r-\delta)/2\pm3\sqrt{\epsilon}n$.

The right hand side of (\ref{eq:**}) is clearly minimised when
$|S|+|T|=n$. It therefore suffices to consider this case alone, yielding\begin{align*}
R_{r}(S,T)-Q_{r}(S,T) & \ge(\delta-r)|T|-(n-|T|)|T|+r(n-|T|)-n\\
 & =|T|^{2}+(\delta-2r-n)|T|+n(r-1).\end{align*}
Define a polynomial $f:\mathbb{R}\rightarrow\mathbb{R}$ by\[
f(x)=x^{2}+(\delta-2r-n)x+n(r-1).\]
Suppose this quadratic has real zeroes at $\tau_{1}$ and $\tau_{2}$,
with $\tau_{1}<\tau_{2}$. Then for $|T|\le\tau_{1}$ and $|T|\ge\tau_{2}$,
we must have $R_{r}(S,T)-Q_{r}(S,T)\ge0$. The discriminant $D$ of%
    \COMMENT{$f(x)=(x+\frac{\delta-2r-n}{2})^2-(\delta-2r-n)^2/4+n(r-1)$ and so
the roots of $f$ are $\frac{n+2r-\delta}{2}\pm\frac{\sqrt{(n+2r-\delta)^2-4n(r-1)}}{2}$}
$f$ is given by\begin{eqnarray*}
D = \left(n+2r-\delta\right)^{2}-4n(r-1)=\left(n+2r-\delta\right)^{2}-
\left(1+2(\alpha+\epsilon)+2\sqrt{2(\alpha+\epsilon)}-\frac{4}{n}\right)n^{2}.\end{eqnarray*}
 But\begin{equation}
n+2r-\delta=\left(1+\epsilon+\sqrt{2(\alpha+\epsilon)}\right)n,\label{eq:1}\end{equation}
 so\[
(n+2r-\delta)^{2}=\left(1+\epsilon^{2}+2(\alpha+\epsilon)+2\epsilon+2(1+\epsilon)\sqrt{2(\alpha+\epsilon)}\right)n^{2}\]
 and\[
D=\epsilon\left(\epsilon+2+2\sqrt{2(\alpha+\epsilon)}\right)n^{2}-4n.\]
\begin{eqnarray*}
\end{eqnarray*}
 Hence $0<D\le5\epsilon n^{2}$. In particular, the quadratic does
indeed have two real zeroes $\tau_{1}<\tau_{2}$, and from the quadratic
formula we have\[
\tau_{1}\ge\frac{n+2r-\delta-3\sqrt{\epsilon}n}{2},\qquad\tau_{2}\le\frac{n+2r-\delta+3\sqrt{\epsilon}n}{2}.\]
Since we are in Case~4, we therefore have either $|T|\le\tau_{1}$ or $|T|\ge\tau_{2}$,
and the result follows.

\medskip

\noindent\textbf{Case 5:} $|T|=(n+2r-\delta)/2\pm3\sqrt{\epsilon}n$
and $\delta-r\le|S|\le (n-2r+\delta)/2-3\sqrt{\epsilon}n$.

(Note that our condition on~$|T|$ implies that we cannot have $|S|>(n-2r+\delta)/2+3\sqrt{\epsilon}n$.)
Let $x_{0}:=(n+2r-\delta)/2+3\sqrt{\epsilon}n\ge|T|$. We then have\[
R_{r}(S,T)\stackrel{(\ref{eq:**})}{\ge}(x_{0}-r)(\delta-r-|S|)+r(\delta-r).\]
 Since $x_{0}+|S|\le n$, we may now argue exactly as in Case 4 (with
$x_{0}$ in place of $|T|$) to show that $R_{r}(S,T)\ge Q_{r}(S,T)$.

\medskip

\noindent\textbf{Case 6:} $|T|=(n+2r-\delta)/2\pm 3\sqrt{\epsilon}n$ and $|S|=(n-2r+\delta)/2\pm 3\sqrt{\epsilon}n$.

In this case, we will use the fact that $G$ is not $\eta$-extremal.
From \eqref{eq:1}, we have\[
\left|\frac{n+2r-\delta}{2}-\left(\frac{1}{2}+\sqrt{\frac{\alpha_{+}}{2}}\right)n\right|\le\left(\frac{\epsilon}{2}+\sqrt{\frac{\epsilon}{2}}\right)n.\]
 Since $\epsilon\ll\eta$, we may conclude that\[
\left||T|-\left(\frac{1}{2}+\sqrt{\frac{\alpha_{+}}{2}}\right)n\right|<\eta n.\]
 A similar argument shows that \[
\left||S|-\left(\frac{1}{2}-\sqrt{\frac{\alpha_{+}}{2}}\right)n\right|<\eta n.\]
 Since $G$ is not $\eta$-extremal, this implies that either $e(S,T)\le(1-\eta)|S||T|$
or \[
e(T)\ge\frac{1}{2}\left(\alpha_{+}+\sqrt{\frac{\alpha_{+}}{2}}+\eta\right)n|T|.\]

\medskip

\noindent\textbf{Case 6a:} $e(S,T)\le(1-\eta)|S||T|$.

Then we have\begin{eqnarray*}
R_{r}(S,T)-Q_{r}(S,T) & \stackrel{(\ref{eq:rdefn})}{\ge} & (\delta-r)|T|-(1-\eta)|S||T|+r|S|-n\\
 & \ge & (\delta-r)|T|-(1-\eta)(n-|T|)|T|+r(n-|T|-6\sqrt{\epsilon}n)-n\\
 & = & (1-\eta)|T|^{2}+(\delta-2r-(1-\eta)n)|T|+(1-6\sqrt{\epsilon})nr-n.\end{eqnarray*}
 Write this quadratic as $a|T|^{2}+b|T|+c$, and let the discriminant
be $D$. We then have\begin{align*}
b^{2} & =\left((1-\eta)n+2r-\delta\right)^{2}\stackrel{(\ref{eq:1})}{=}\left(1-\eta+\epsilon+\sqrt{2(\alpha+\epsilon)}\right)^{2}n^{2}\\
 & =\left((1-\eta)^{2}+\epsilon^{2}+2(\alpha+\epsilon)+2(1-\eta)\epsilon+2(1-\eta)\sqrt{2(\alpha+\epsilon)}+2\epsilon\sqrt{2(\alpha+\epsilon)}\right)n^{2}\\
 & \le\left((1-\eta)^{2}+2\alpha+2(1-\eta)\sqrt{2(\alpha+\epsilon)}+\epsilon^{\frac{1}{3}}\right)n^{2}\end{align*}
 and\begin{align*}
4ac & =4(1-\eta)(1-6\sqrt{\epsilon})nr-4(1-\eta)n\\
 & \ge(1-\eta)(1-6\sqrt{\epsilon})\left(1+2(\alpha+\epsilon)+2\sqrt{2(\alpha+\epsilon)}\right)n^{2}-4n\\
 & \ge\left(1-\eta\right)\left(1+2\alpha+2\sqrt{2(\alpha+\epsilon)}\right)n^{2}-\epsilon^{\frac{1}{3}}n^{2}.\end{align*}
 Thus\begin{align*}
D & =b^{2}-4ac\le\left(\left(1-\eta\right)^{2}-\left(1-\eta\right)+2\eta\alpha+2\epsilon^{\frac{1}{3}}\right)n^{2}\\
 & =\left(-\eta\left(1-\eta-2\alpha\right)+2\epsilon^{\frac{1}{3}}\right)n^{2}<0,\end{align*}
 where the last line follows since $\epsilon\ll\eta\ll1/2-\alpha$
and $\alpha<1/2$. Hence this quadratic has no real zeroes, and $R_{r}(S,T)-Q_{r}(S,T)\ge0$
as desired.

\medskip

\noindent\textbf{Case 6b:} $e(T)\ge(\alpha_{+}+\sqrt{\alpha_{+}/2}+\eta)n|T|/2$
and $e(S,T)\ge(1-\eta)|S||T|$. 

Then we have\begin{eqnarray*}
\sum_{v\in T}d(v) & \ge & e(S,T)+2e(T)\\
 & \ge & \left((1-\eta)|S|+\left(\alpha_{+}+\sqrt{\frac{\alpha_{+}}{2}}+\eta\right)n\right)|T|\\
 & \ge & \left((1-\eta)\left(\frac{n-2r+\delta}{2}-3\sqrt{\epsilon}n\right)+\left(\alpha+\sqrt{\frac{\alpha_{+}}{2}}+\eta\right)n\right)|T|\\
 & \stackrel{(\ref{eq:1})}{\ge} & \left((1-\eta)\left(\frac{1}{2}-\frac{\epsilon}{2}-\sqrt{\frac{\alpha+\epsilon}{2}}-3\sqrt{\epsilon}\right)+\alpha+\sqrt{\frac{\alpha_{+}}{2}}+\eta\right)n|T|\\
 & \ge & \left(\frac{1}{2}-\sqrt{\frac{\alpha+\epsilon}{2}}-4\sqrt{\epsilon}-\frac{\eta}{2}+\alpha+\sqrt{\frac{\alpha_{+}}{2}}+\eta\right)n|T|\\
 & \ge & \left(\frac{1}{2}+\frac{\eta}{3}+\alpha\right)n|T|.\end{eqnarray*}
 Hence\begin{eqnarray*}
R_{r}(S,T)-Q_{r}(S,T) 
 & \stackrel{(\ref{eq:rdefn})}{\ge} & \sum_{v\in T}d(v)-(n-|T|)|T|+r(|S|-|T|)-n\\
 & \ge & \sum_{v\in T}d(v)+|T|^{2}-n|T|+r(n-|T|-6\sqrt{\epsilon}n)-r|T|-n\\
 & = & \sum_{v\in T}d(v)+|T|^{2}-(n+2r)|T|+(1-6\sqrt{\epsilon})nr-n\\
 & \ge & |T|^{2}-\left(\left(\frac{1}{2}-\frac{\eta}{3}-\alpha\right)n+2r\right)|T|+(1-6\sqrt{\epsilon})nr-n\\
 & \ge & |T|^{2}-\left(1+\epsilon+\sqrt{2(\alpha+\epsilon)}-\frac{\eta}{3}\right)n|T|+(1-6\sqrt{\epsilon})nr-n\\
 & \ge & |T|^{2}-\left(1+\sqrt{2(\alpha+\epsilon)}-\frac{\eta}{4}\right)n|T|+(1-7\sqrt{\epsilon})nr.\end{eqnarray*}
 Write this quadratic as $|T|^{2}+b|T|+c$, and let the discriminant
be $D$. We then have\begin{align*}
b^{2} & \le\left(1+2\alpha+2\eps +\frac{\eta^{2}}{16}+2\sqrt{2(\alpha+\epsilon)}-\frac{\eta}{2}\right)n^{2}
\le\left(1+2\alpha+2\sqrt{2(\alpha+\epsilon)}-\frac{\eta}{3}\right)n^{2}\end{align*}
 and\begin{align*}
4c & =4(1-7\sqrt{\epsilon})nr=(1-7\sqrt{\epsilon})\left(1+2(\alpha+\epsilon)+2\sqrt{2(\alpha+\epsilon)}\right)n^{2}\\
 & \ge\left(1+2\alpha+2\sqrt{2(\alpha+\epsilon)}\right)n^{2}-\epsilon^{\frac{1}{3}}n^{2}.\end{align*}
 Thus\[
D=b^{2}-4c\le \left(\epsilon^{\frac{1}{3}}-\frac{\eta}{3}\right)n^{2}<0\]
 since $\epsilon\ll\eta$. Hence this quadratic has no real zeroes,
and $R_{r}(S,T)-Q_{r}(S,T)\ge0$ as desired. This completes the proof.
\end{proof}
It is now simple to prove that every non-extremal graph $G$ whose minimum degree $\delta$ is slightly larger than $n/2$ contains
significantly more than $\textnormal{reg}_{\textnormal{even}}(n,\delta)/2$ edge-disjoint Hamilton cycles.
\begin{lem}
\label{lem:mainresultpart2}Suppose $0<1/n\ll c\ll\eta\ll\alpha,1/2-\alpha<1/2$.
Let $G$ be a graph on $n$ vertices with $\delta:=\delta(G)=\left(1/2+\alpha\right)n$
such that $G$ is not $\eta$-extremal. Then $G$ contains at least $\textnormal{reg}_{\textnormal{even}}(n,\delta)/2+cn$
edge-disjoint Hamilton cycles.\end{lem}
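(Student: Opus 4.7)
The plan is to follow the strategy outlined in Section~\ref{toolsproof}: first extract a sparse robustly expanding even factor $H$ from $G$, then show that $G-H$ still admits a large even factor $H'$ of degree at least $\textnormal{reg}_{\textnormal{even}}(n,\delta)$, and finally apply Theorem~\ref{thm:hamdecresult} to the union $H+H'$. To start, I would choose constants with $0 < 1/n \ll \nu' \ll c \ll \nu \ll \tau$, where $\tau$ is additionally taken small enough that $\tau \le \tau_0$, with $\tau_0 = \tau_0(c)$ the constant returned by Theorem~\ref{thm:hamdecresult} when applied with $c$ in the role of $\alpha$, and $\nu \le \tau\eta/2$ so that Lemma~\ref{lem:anclem1} applies. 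Then Lemma~\ref{lem:anclem1} shows that $G$ is a robust $(\nu,\tau)$-expander, and Lemma~\ref{lem:smallexpanderfactor} applied with $\epsilon = 2c$ (with $2cn$ rounded slightly so that it is an even integer) produces an even $2cn$-factor $H$ of $G$ which is a robust $(\nu',\tau)$-expander.

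Next, set $G_1 := G - H$, which has minimum degree $(1/2+\alpha-2c)n$. Since $G$ is not $\eta$-extremal and $2c \ll \eta$, Lemma~\ref{lem:removefactornotextremal} (with $2c$ in the role of $\epsilon$ and $\eta/2$ in the role of $\eta$) implies that $G_1$ is not $(\eta/2)$-extremal. I then apply Lemma~\ref{lem:largefactor} to $G_1$ with $\epsilon' := 4c$ (rounded so that the target degree is an even integer), yielding an even $r$-factor $H'$ of $G_1$ of degree
\begin{equation*}
r = \frac{n}{4} + \frac{(\alpha+2c)n}{2} + \sqrt{\frac{\alpha+2c}{2}}\,n \;\ge\; \frac{n}{4} + \frac{\alpha n}{2} + \sqrt{\frac{\alpha}{2}}\,n + cn.
\end{equation*}
Since the upper bound in Theorem~\ref{thm:regbounds} gives $\textnormal{reg}_{\textnormal{even}}(n,\delta) \le n/4 + \alpha n/2 + \sqrt{\alpha/2}\,n + 1$, we obtain $d(H') \ge \textnormal{reg}_{\textnormal{even}}(n,\delta)$ for large $n$.

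Finally, $H$ and $H'$ are edge-disjoint, since $H' \subseteq G_1 = G - H$, so $H + H'$ is an even factor of $G$ of degree $2cn + r \ge \textnormal{reg}_{\textnormal{even}}(n,\delta) + 2cn$. Robust expansion is preserved under adding edges, so $H + H'$ inherits the robust $(\nu',\tau)$-expansion property from $H$. Combined with $\tau \le \tau_0(c)$ and $d(H+H') \ge 2cn \ge cn$, Theorem~\ref{thm:hamdecresult} then produces a Hamilton decomposition of $H + H'$, giving at least $d(H+H')/2 \ge \textnormal{reg}_{\textnormal{even}}(n,\delta)/2 + cn$ edge-disjoint Hamilton cycles in $G$, as required. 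The substantive work has already been done in Lemmas~\ref{lem:smallexpanderfactor}, \ref{lem:removefactornotextremal} and~\ref{lem:largefactor}; the only real obstacle here is bookkeeping, namely verifying that the hierarchy $\nu' \ll c \ll \nu \ll \tau$ can be chosen simultaneously compatible with $\tau \le \tau_0(c)$ and $\nu \le \tau\eta/2$ (which is possible since $\tau_0(c)$ is a fixed positive constant depending only on $c$), and that $2cn$ and $r$ can be arranged to be even integers by perturbing $c$ and $\epsilon'$ by $O(1/n)$.
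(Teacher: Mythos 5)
Your strategy is identical to the paper's: use Lemmas~\ref{lem:anclem1} and~\ref{lem:smallexpanderfactor} to extract a sparse robustly-expanding even factor $H$, apply Lemma~\ref{lem:removefactornotextremal} to confirm $G-H$ is still non-extremal, apply Lemma~\ref{lem:largefactor} to obtain a large even factor $H'$ of $G-H$, and then decompose $H+H'$ via Theorem~\ref{thm:hamdecresult}; the final degree count $d(H+H')/2\ge\textnormal{reg}_{\textnormal{even}}(n,\delta)/2+cn$ is exactly the paper's.

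There is, however, one genuine slip in the parameter hierarchy. You propose $\tau\le\tau_0(c)$ where $\tau_0(c)$ is the constant returned by Theorem~\ref{thm:hamdecresult} applied with $\alpha=c$, while also requiring $c\ll\nu\ll\tau$. Since constants in such a hierarchy are fixed from right to left, $\tau$ must be chosen before $c$ and so cannot be taken to depend on $c$; and since $\tau_0(\alpha)$ may tend to $0$ as $\alpha\to 0$, there is no guarantee one can pick constants satisfying $c\ll\nu\ll\tau\le\tau_0(c)$ simultaneously. The underlying cause is that your lower bound $d(H+H')\ge 2cn$ is much weaker than needed. Since in fact $d(H+H')\ge r>n/4$, you should invoke Theorem~\ref{thm:hamdecresult} with $\alpha=1/4$ -- exactly as the paper does -- giving a fixed constant $\tau_0:=\tau(1/4)$ that is independent of $c$, so the hierarchy $0<1/n\ll\nu',c\ll\epsilon,\epsilon'\ll\eta\ll\nu\ll\tau\ll\alpha,1/2-\alpha,\tau_0$ has no circularity. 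The paper also keeps the degree of $H$ and the increment in $r$ as two independent parameters $\epsilon,\epsilon'$ rather than tying both to $c$, which gives slightly more room but is not essential. Apart from this, your computations (in particular using the upper bound in Theorem~\ref{thm:regbounds} to verify $r\ge\textnormal{reg}_{\textnormal{even}}(n,\delta)$) are correct and match the paper's argument.
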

\begin{proof}
Let $\tau_{0}:=\tau(1/4)$ be as defined in Theorem~\ref{thm:hamdecresult}.
Choose new constants $\epsilon,\eps',\nu,\nu',\tau$ such that\[
0<1/n\ll\nu',c\ll\epsilon,\epsilon'\ll\eta\ll\nu\ll\tau\ll\alpha,1/2-\alpha,\tau_{0}.\]
Let\[
r:=\left(\frac{1}{4}+\frac{\alpha+\epsilon'}{2}+\sqrt{\frac{\alpha+\epsilon'}{2}}\right)n.\]
By reducing $\epsilon'$ and $\eps$ slightly if necessary we may assume that both $r$ and $\eps n$
are even integers. By Lemmas \ref{lem:anclem1} and \ref{lem:smallexpanderfactor},
$G$ contains an $\epsilon n$-factor $H$ which is a robust $(\nu',\tau)$-expander.
Let $G':=G-H$. By Lemma~\ref{lem:removefactornotextremal}, $G'$
is not $(\eta/2)$-extremal. Since also $\delta(G')=(1/2+\alpha-\eps)n$, we can apply Lemma~\ref{lem:largefactor}
with $\eps+\eps'$ and $\alpha-\eps$ playing the roles of $\eps $ and $\alpha$ to find an $r$-factor $H'$ of~$G'$.

Since $H$ is a robust $(\nu',\tau)$-expander (and thus also a robust
$(\nu',\tau_{0})$-expander), the same holds for $H+H'$. Hence by
Theorem~\ref{thm:hamdecresult}, $H+H'$ can be decomposed into $d(H+H')/2$
edge-disjoint Hamilton cycles. By Theorem~\ref{thm:regbounds} we have
$r\ge\textnormal{reg}_{\textnormal{even}}(n,\delta)$,
and so\[
\frac{1}{2}d(H+H')\ge\frac{1}{2}\left(\textnormal{reg}_{\textnormal{even}}(n,\delta)+\epsilon n\right)\ge
\frac{1}{2}\textnormal{reg}_{\textnormal{even}}(n,\delta)+cn\]
as desired.
\end{proof}

\section{Proof of Theorems~\ref{thm:mainresult} and~\ref{thm:halfnresult}}\label{sec:nhalf}

We first combine Lemmas~\ref{lem:mainresultpart1} and~\ref{lem:mainresultpart2} to prove Theorem~\ref{thm:mainresult}.

\removelastskip\penalty55\medskip\noindent{\bf Proof of Theorem~\ref{thm:mainresult}. }
Choose $n_0\in\mathbb{N}$ and an additional constant $\eta$ such that $1/n_0\ll \eta\ll \eps$. Define $\alpha$ by $\delta(G)=(1/2+\alpha)n$.
Recall from Section~\ref{sec:intro} that Theorem~\ref{thm:mainresult} was already proved in~\cite{Kellyapps}
for the case when $\delta(G)\ge (2-\sqrt{2}+\eps)n$. So we may assume that $\alpha\le 3/2-\sqrt{2}+\eps$
and so $\eta\ll \alpha,1/2-\alpha$. Thus we can apply Lemma~\ref{lem:mainresultpart1} (if $G$ is $\eta$-extremal) or
Lemma~\ref{lem:mainresultpart2} (if $G$ is not $\eta$-extremal)
to find ${\rm reg}_{\rm even}(n,\delta(G))/2$ edge-disjoint Hamilton cycles in $G$.
\endproof

Let $G$ be a graph on $n$ vertices whose minimum degree is not much smaller than $n/2$. 
Before we can prove Theorem~\ref{thm:halfnresult}, we must first show that either $G$ is a robust expander or it is close 
to either the complete bipartite graph $K_{n/2,n/2}$ or the disjoint union $K_{n/2}\dot\cup K_{n/2}$ of two cliques. 
The former case corresponds to (i) of Theorem~\ref{thm:halfnresult}, and the latter case corresponds to (ii).

\begin{defn}
\label{def:closedefn}We say that a graph $G$ is \emph{$\epsilon$-close
to $K_{n/2,n/2}$} if there exists $A\subset V(G)$ with $|A|=\lfloor n/2\rfloor$ and such that
$e(A)\le \epsilon n^{2}$.
We say that $G$ is \emph{$\epsilon$-close to $K_{n/2}\dot{\cup}K_{n/2}$}
if there exists $A\subset V(G)$ with $|A|=\lfloor n/2\rfloor$ and such that $e(A,\overline{A})\le\epsilon n^{2}$.
\end{defn}
Suppose that $G$ is a graph of minimum degree roughly $n/2$. If $G$ is $\epsilon$-close to $K_{n/2,n/2}$ then the
bipartite subgraph of $G$ induced by $A$ and $\overline{A}$ is almost complete. However, $\overline{A}$
may also contain many edges. If $G$ is $\epsilon$-close to $K_{n/2}\dot{\cup}K_{n/2}$ then both $G[A]$ and $G[\overline{A}]$
are almost complete.
\begin{lem}
\label{lem:characteriseexpanders}Suppose $0<1/n\ll\kappa\ll\nu\ll\tau,\epsilon<1$.
Let $G$ be a graph on $n$ vertices of minimum degree $\delta:=\delta(G)\ge(1/2-\kappa)n$.
Then $G$ satisfies one of the following properties:
\begin{enumerate}
\item $G$ is $\epsilon$-close to $K_{n/2,n/2}$;
\item $G$ is $\epsilon$-close to $K_{n/2}\dot{\cup}K_{n/2}$;
\item $G$ is a robust $(\nu,\tau)$-expander.%
    \COMMENT{Quick informal reminder of how this works for my own benefit: we can
assume $|S|\approx n/2$ or it's easy. If any non-tiny part of $S$
isn't in $RN$, it'll spam out edges to $\overline{S}$ and so most
of $\overline{S}$ will be in $RN$. This will give us $|RN|\ge|S|+\nu n$
unless almost no vertices in $S$ are in $RN$, in which case $S$
must be really sparse. This in turn implies that $e(S,\overline{S})$
must be large by $\delta\ge n/2$, and we're in the bipartite case.
Alternatively, if almost all of $S$ is in $RN$, then we have $|RN|\ge|S|+\nu n$
unless almost none of $\overline{S}$ is in $RN$. This can only happen
if $e(S,\overline{S})$ is really small, which in turn implies that
$S$ and $\overline{S}$ are almost cliques since $\delta\approx|S|,|\overline{S}|$.
Finally, we add and remove on the order of $\nu n$ arbitrary vertices
to $S$ to give it size exactly $\lfloor n/2\rfloor$.}
\end{enumerate}
\end{lem}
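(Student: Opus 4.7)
The plan is to argue by contrapositive: assuming $G$ is not a robust $(\nu,\tau)$-expander, I will produce $A\subset V(G)$ with $|A|=\lfloor n/2\rfloor$ establishing either (i) or (ii). Fix a witness $S$ to the failure, so $\tau n\le|S|\le(1-\tau)n$ and $|RN_{\nu}(S)|<|S|+\nu n$, and set $S_{1}:=S\setminus RN_{\nu}(S)$ and $S_{2}:=\overline{S}\setminus RN_{\nu}(S)$; the failure rearranges to $|S_{1}|+|S_{2}|>|\overline{S}|-\nu n$. First I would observe that $|S|\approx n/2$. For the upper bound, if $|S|\ge(1/2+\kappa+\nu)n$ then every vertex $v$ has $d_{S}(v)\ge\delta(G)-|\overline{S}|\ge\nu n$, so $RN_{\nu}(S)=V(G)$, contradicting the failure since $|S|\le(1-\tau)n<n-\nu n$. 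For the lower bound, double-count $\sum_{v}d_{S}(v)=\sum_{v\in S}d(v)\ge|S|\delta(G)$ against $\sum_{v}d_{S}(v)<|RN_{\nu}(S)|\cdot|S|+\nu n^{2}$; using $|S|\ge\tau n$ this yields $|S|>(1/2-3\nu/\tau)n$. So $|S|=n/2\pm 3\nu n/\tau$.

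Introduce a constant $\mu$ with $\nu\ll\mu\ll\tau,\epsilon$ and split on the size of $S_{1}$. In the \emph{clique case} $|S_{1}|\le\mu n$, the failure relation forces $|\overline{S}|-|S_{2}|<2\mu n$, so all but $2\mu n$ vertices of $\overline{S}$ lie in $S_{2}$ and have fewer than $\nu n$ neighbours in $S$. Counting edges across the bipartition gives $e(S,\overline{S})\le|S_{2}|\nu n+2\mu n\cdot|S|\le 3\mu n^{2}$. Swapping the $O(\nu n/\tau)$ vertices needed to adjust $S$ to exact size $\lfloor n/2\rfloor$ changes $e(S,\overline{S})$ by at most $O(\nu n^{2}/\tau)$, so the resulting $A$ satisfies (ii).

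In the \emph{bipartite case} $|S_{1}|>\mu n$, the aim is to show $|S_{1}|\ge|S|-5\nu n$, so that $S_{1}$ fills almost all of $S$. Each $v\in S_{1}$ has $d_{\overline{S}}(v)\ge\delta(G)-\nu n\ge(1/2-2\nu)n$, giving $e(S_{1},\overline{S})\ge|S_{1}|(1/2-2\nu)n$. Double-counting this from the $\overline{S}$ side (similarly to Lemma~\ref{lem:expansionprops}(iv)) shows that at most $2(|\overline{S}|-(1/2-2\nu)n)$ vertices of $\overline{S}$ can have fewer than $\nu n$ neighbours in $S_{1}$, using the denominator bound $|S_{1}|-\nu n\ge|S_{1}|/2$. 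Since $S_{2}$ is contained in this set and $|\overline{S}|=n-|S|$, combining with $|S_{1}|+|S_{2}|>|\overline{S}|-\nu n$ yields the desired lower bound. Hence
\[
2e(S)\le|S_{1}|\nu n+(|S|-|S_{1}|)|S|\le 6\nu n|S|\le 6\nu n^{2},
\]
and swapping $O(\nu n/\tau)$ vertices as before produces $A$ with $e(A)\le\epsilon n^{2}$, giving (i).

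The main technical obstacle is the sharp bound $|S_{1}|\ge|S|-5\nu n$ in the bipartite case: the averaging argument has slack unless $|\overline{S}|$ is already known to be at most $(1/2+O(\nu))n$, which is why the preliminary step of pinning down $|S|\approx n/2$ is essential. Once that is in place, the remainder reduces to routine bookkeeping governed by the hierarchy $\kappa\ll\nu\ll\mu\ll\tau,\epsilon$.
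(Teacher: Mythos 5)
Your argument follows the same strategy as the paper's: pin $|S|$ to $n/2\pm O(\nu n/\tau)$, then split on whether $|S\setminus RN_\nu(S)|$ is small (giving closeness to $K_{n/2}\dot\cup K_{n/2}$) or not small (giving closeness to $K_{n/2,n/2}$), and finally adjust $S$ to a set of exactly $\lfloor n/2\rfloor$ vertices. The only cosmetic difference is that the paper uses the single threshold $\sqrt{\nu}n$ for this dichotomy, while you introduce a fresh constant $\mu$ with $\nu\ll\mu\ll\tau,\epsilon$; both work. One small slip: your claimed bound $|S_1|\ge|S|-5\nu n$ in the bipartite case is not quite right, since the preliminary pinning only gives $|\overline{S}|\le(1/2+O(\nu/\tau))n$ rather than $(1/2+O(\nu))n$; tracing your own estimate $|S_2|\le 2\bigl(|\overline{S}|-(1/2-2\nu)n\bigr)$ through $|S_1|+|S_2|>|\overline{S}|-\nu n$ yields $|S|-|S_1|=O(\nu n/\tau)$, not $5\nu n$. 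Since $\tau$ is a constant in the hierarchy, this changes nothing downstream -- $e(S)$ is still $O(\nu n^2/\tau)\le\epsilon n^2$ -- but the constant $5$ and the subsequent "$\le 6\nu n^2$" should be replaced by $O(1/\tau)$-dependent constants.
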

\begin{proof}
Suppose $S\subset V(G)$ with $\tau n\le|S|\le(1-\tau)n$. Our aim
is to show that either $RN:=RN_{\nu}(S)$ has size at least $|S|+\nu n$
or that $G$ is close to either $K_{n/2,n/2}$ or $K_{n/2}\dot{\cup}K_{n/2}$.
We will split the proof into cases depending on $|S|$.

\medskip

\noindent\textbf{Case 1:} $\tau n\le|S|\le(1/2-\sqrt{\nu})n$.

In this case, we have\[
\delta|S|\le e'(S,V(G))=e'(S,RN)+e'(S,\overline{RN})\le|S||RN|+\nu n^{2}\le|S||RN|+\nu n\frac{|S|}{\tau},\]
and so $|RN|\ge(1/2-\kappa-\nu/\tau)n\ge|S|+\nu n$ as desired. (Recall that $e'(A,B)$ denotes
the number of ordered pairs $(a,b)$ with $ab\in E(G)$, $a\in A$ and $b\in B$.)%
    \COMMENT{We have this in the notation section, but since it's not standard, it seems to be better to recall it.}

\medskip

\noindent\textbf{Case 2:} $(1/2+2\nu)n\le|S|\le(1-\tau)n$.

In this case, we have $RN=V(G)$ and so the result is immediate. Indeed,
for all $v\in V(G)$, we have $d(v)\ge(1/2-\kappa)n$ and so $|N(v)\cap S|\ge(2\nu-\kappa)n\ge\nu n$.

\medskip

\noindent\textbf{Case 3:} $(1/2-\sqrt{\nu})n\le|S|\le(1/2+2\nu)n$.

Suppose that $|RN|<|S|+\nu n$. We will first show that either $|S\setminus RN|<\sqrt{\nu}n$
or $G$ is $\epsilon$-close to $K_{n/2,n/2}$. Suppose $|S\setminus RN|\ge\sqrt{\nu}n$.
Then\begin{align*}
|S\setminus RN|(\delta-\nu n) & \le e(S\setminus RN,\overline{S})=e(S\setminus RN,\overline{S}\cap RN)+e(S\setminus RN,\overline{S}\setminus RN)\\
 & \le|S\setminus RN||\overline{S}\cap RN|+\nu n^{2}
\le|S\setminus RN||\overline{S}\cap RN|+\sqrt{\nu}n|S\setminus RN|,\end{align*}
and so $|\overline{S}\cap RN|\ge\delta-2\sqrt{\nu}n.$ But then together
with our assumption that $|RN|<|S|+\nu n$, this implies $|S\cap RN|<3\sqrt{\nu}n$.
Hence $e(S)\le3\sqrt{\nu}n^{2}+|S|\nu n<4\sqrt{\nu}n^{2}$. By adding
or removing at most $\sqrt{\nu}n$ arbitrary vertices to or from $S$,
we can form a set $A$ of $\lfloor n/2\rfloor$ vertices with \[
e(A)<4\sqrt{\nu}n^{2}+\sqrt{\nu}n^{2}=5\sqrt{\nu}n^{2}\le\epsilon n^{2}.\]
Thus $G$ is $\epsilon$-close to $K_{n/2,n/2}$.

We may therefore assume that $|S\setminus RN|<\sqrt{\nu}n$, from
which it follows that $|\overline{S}\cap RN|<2\sqrt{\nu}n$ (by our
initial assumption that $|RN|<|S|+\nu n$). We will now show that
$G$ is $\epsilon$-close to $K_{n/2}\dot{\cup}K_{n/2}$. We have
$e(S,\overline{S}\cap RN)\le|S||\overline{S}\cap RN|\le2\sqrt{\nu}n^{2}$,
and hence $e(S,\overline{S})\le3\sqrt{\nu}n^{2}$. 
As before, by adding or removing at most $\sqrt{\nu}n$ arbitrary
vertices to or from $S$, we can therefore form a set $A$ of $\lfloor n/2\rfloor$
vertices with $e(A,\overline{A})\le e(S,\overline{S})+\sqrt{\nu}n^{2}\le\epsilon n^{2}$.
Hence $G$ is $\epsilon$-close to $K_{n/2}\dot{\cup}K_{n/2}$.

If $G$ is not $\epsilon$-close to either $K_{n/2,n/2}$ or $K_{n/2}\dot{\cup}K_{n/2}$,
we must therefore have $|RN|\ge|S|+\nu n$ for all $S\subset V(G)$
with $\tau n\le|S|\le(1-\tau)n$, so that $G$ is a robust $(\nu,\tau)$-expander
as required.
\end{proof}
We now have all the tools we need to prove Theorem~\ref{thm:halfnresult}.

\removelastskip\penalty55\medskip\noindent{\bf Proof of Theorem~\ref{thm:halfnresult}. }
Let $\tau:=\tau(1/4)$ be as defined in Theorem~\ref{thm:hamdecresult}.
Choose $n_0\in\mathbb{N}$ and new constants $\epsilon',\epsilon'',\nu,\nu'$
such that\[
0<1/n_{0}\ll\nu'\ll\eps\ll\eps',\eps''\ll\nu\ll\tau,\eta.\]
Consider any graph $G$ on $n\ge n_0$ vertices as in Theorem~\ref{thm:halfnresult}. Let $\delta:=\delta(G)$
and define $\alpha$ by $\delta=(1/2+\alpha)n$. So $-\eps\le \alpha\le \eps$.
Let\[
r:=\left(\frac{1}{4}+\frac{\alpha+\epsilon'}{2}+\sqrt{\frac{\alpha+\epsilon'}{2}}\right)n.\]
By reducing $\epsilon'$ and $\eps''$ slightly if necessary we may assume that both $r$ and $\eps''n$ are
even integers.
 
Suppose that $G$ does not satisfy~(i), i.e. $e(X)\ge\eta n^{2}$
for all $X\subset V(G)$ with $|X|=\lfloor n/2\rfloor$. We claim
that $G$ is not $(\eta/4)$-extremal.
To show this, consider any set $B\subset V(G)$ with
$$
|B|=\left(\frac{1}{2}+\sqrt{\frac{\alpha_+}{2}}\pm\frac{\eta}{4}\right)n.
$$
By adding or removing at most $\eta n/2$ arbitrary vertices to and
from $B$, we obtain a set $B'$ with $|B'|=\lfloor n/2\rfloor$
and such that $e(B') \le e(B)+\eta n^{2}/2$.
Together with our assumption that (i) does not hold, this implies
that
$$e(B)\ge\frac{\eta n^{2}}{2}\ge \frac{1}{2}\left(\alpha_+ +\sqrt{\frac{\alpha_+}{2}}+\frac{\eta}{4} \right)n|B|.
$$
Hence $G$ is not $(\eta/4)$-extremal.

Suppose moreover that (ii) does not hold, so that $G$ fails to be
$\eta$-close to $K_{n/2}\dot{\cup}K_{n/2}$. By Lemma~\ref{lem:characteriseexpanders},
it follows that $G$ is a robust $(\nu,\tau)$-expander. By Lemma~\ref{lem:smallexpanderfactor},
$G$ therefore contains an $\epsilon'' n$-factor $H$ which is a robust $(\nu',\tau)$-expander.
Let $G':=G-H$. By Lemma~\ref{lem:removefactornotextremal}, $G'$
is not $(\eta/8)$-extremal. Since also $\delta(G')=(1/2+\alpha-\eps'')n$, we can apply Lemma~\ref{lem:largefactor}
with $\eps'+\eps''$ and $\alpha-\eps''$ playing the roles%
   \COMMENT{ok since $-(\eps'+\eps'')\le \alpha-\eps''$ as $-\eps'\le -\eps\le \alpha$}
of $\eps$ and $\alpha$ to find an $r$-factor $H'$ of~$G'$.

Since $H$ is a robust $(\nu',\tau)$-expander, the same holds for $H+H'$.
Hence by Theorem~\ref{thm:hamdecresult}, $H+H'$ can be decomposed into $d(H+H')/2$
edge-disjoint Hamilton cycles. By Theorem~\ref{thm:regbounds} (and the fact that
$\textnormal{reg}_{\textnormal{even}}(n,\delta)=0$
if $\delta<n/2$) we have
$r\ge\max\{\textnormal{reg}_{\textnormal{even}}(n,\delta) ,n/8\}$,
and so\begin{align*}
\frac{1}{2}d(H+H') & \ge\frac{1}{2}\left( \max \{\textnormal{reg}_{\textnormal{even}}(n,\delta),n/8\}+\epsilon'' n\right)
\ge\frac{1}{2}\max \{\textnormal{reg}_{\textnormal{even}}(n,\delta),n/8\}+\eps n,
\end{align*}
as desired.\endproof

\medskip

{\footnotesize \obeylines \parindent=0pt

Daniela K\"{u}hn, John Lapinskas, Deryk Osthus 
School of Mathematics 
University of Birmingham
Edgbaston
Birmingham
B15 2TT
UK
\begin{flushleft}
{\it{E-mail addresses}:\\
\tt{\{d.kuhn, jal129, d.osthus\}@bham.ac.uk}}
\end{flushleft}}

\end{document}